\newcommand{\SA}{{\mathcal{A}}}
\newcommand{\SB}{{\mathcal{B}}}
\newcommand{\SC}{{\mathcal{C}}}
\newcommand{\SD}{{\mathcal{D}}}
\newcommand{\SF}{{\mathcal{F}}}
\newcommand{\SH}{{\mathcal{H}}}
\newcommand{\SL}{{\mathcal{L}}}
\newcommand{\SN}{{\mathcal{N}}}
\renewcommand{\SS}{{\mathcal{S}}}
\newcommand{\SV}{{\mathcal{V}}}
\newcommand{\F}{\mathbb{F}}
\newcommand{\PP}{\mathbb{P}}
\newcommand{\Z}{\mathbb{Z}}
\newcommand{\C}{\mathbb{C}}
\newcommand{\OO}{{\mathbb{O}}}
\newcommand{\N}{\mathbb{N}}
\newcommand{\R}{\mathbb{R}}
\newcommand{\D}{\mathbb{D}}
\newcommand{\CP}{\mathbb{CP}}
\newcommand{\sS}{\mathbb{S}}
\newcommand{\rk}{\operatorname{rk}}
\newtheorem{proposition}{Proposition}[section]
\newtheorem{theorem}[proposition]{Theorem}
\newtheorem{definition}[proposition]{Definition}
\newtheorem{lemma}[proposition]{Lemma}
\newtheorem{corollary}[proposition]{Corollary}
\newtheorem{remark}[proposition]{Remark}
\newtheorem*{rks}{Remark}
\begin{document}

\title{Almost contact $5$--manifolds are contact}

\subjclass{Primary: 53D10. Secondary: 53D15, 57R17.}
\date{March, 2012}

\keywords{contact structures, Lefschetz pencils.}
\thanks{RC is financially supported by a research grant from La Caixa. DP would like to thank ICTP for offering a visiting position in 2011 that allowed him to develop this article. RC and FP are supported by the Spanish National Research Project MTM2010--17389 and ICMAT Severo Ochoa Project SEV--2011--0087.}

\author{Roger Casals}
\address{Instituto de Ciencias Matem\'aticas CSIC-UAM-UC3M-UCM,
C. Nicol\'as Cabrera, 13-15, 28049, Madrid, Spain}
\email{casals.roger@icmat.es}

\author{Dishant M. Pancholi}
\address{Chennai Mathematical Institute,
H1 SIPCOT IT Park, Kelambakkam,
Siruseri Pincode:603 103, TN, India.}
\email{dishant@cmi.ac.in}

\author{Francisco Presas}
\address{Instituto de Ciencias Matem\'aticas CSIC-UAM-UC3M-UCM,
C. Nicol\'as Cabrera, 13-15, 28049, Madrid, Spain}
\email{fpresas@icmat.es}


\renewcommand{\theenumi}{\roman{enumi}}

\begin{abstract}
The existence of a contact structure is proved in any homotopy class of almost contact structures on a closed $5$--dimensional manifold.
\end{abstract}

\maketitle

\section{Introduction} \label{sec:introduction}

Let $(M^{2n+1},\xi)$ be a cooriented contact manifold with associated contact form
$\alpha$, i.e. $\xi=\ker\alpha$. This structure determines a symplectic distribution
$(\xi,d\alpha_{|\xi}) \subset TM$. Any change of the associated contact form $\alpha$ does
not change the conformal symplectic class of $d \alpha$ restricted to $\xi$. 
This  allows us to choose a compatible almost complex structure $J\in End(\xi).$
Thus given a cooriented contact structure we obtain in a natural way a reduction of
the structure group $Gl(2n+1,\R)$ of the tangent bundle $TM$ to the group $U(n)\times \{1\}$,
which is unique up to homotopy, see \cite[Prop. 2.4.8]{Ge}. A manifold $M$ is said to be an \emph{almost contact
manifold} if the structure group of its tangent bundle can be reduced to $U(n)
\times \{ 1 \}$. In particular, cooriented contact manifolds are
almost contact manifolds and such a reduction of the structure group of the
tangent bundle of a manifold $M$ is a necessary condition for the existence of a
cooriented contact structure on $M$. It is unknown whether this condition is in general
sufficient. See however the recent development \cite{BEM}.\\

\noindent Nevertheless there are cases in which the existence of an almost contact structure is sufficient for the manifold to admit a contact structure. For example, if the manifold $M$ is open then one can apply Gromov's
$h$--principle techniques to conclude that the condition is sufficient. See the result 10.3.2 in \cite{EM}. The scenario is quite different for closed almost contact manifolds. Using results of Lutz \cite{Lu1} and Martinet \cite{Ma} one can show that every cooriented tangent $2$--plane field on a closed oriented $3$--manifold is
homotopic to a contact structure. A good account of this result from a modern perspective is given in~\cite{Ge}. For manifolds of higher dimensions there are various results
establishing the sufficiency of the condition. Important instances of these are the construction of contact structures on certain principal $\sS^1$--bundles over closed symplectic manifolds due to Boothby and
Wang~\cite{BW}, the existence of a contact structure on the product of a contact manifold with a
surface of genus greater than zero following Bourgeois~\cite{Bo} and the existence of contact
structures on simply connected $5$--dimensional closed orientable manifolds obtained by Geiges
\cite{Ge1} and its higher dimensional analogue ~\cite{Ge2}.\\

\noindent Let us turn our attention to $5$--manifolds since the main goal of this article is to show that any orientable almost contact $5$--manifold is contact. In this case H. Geiges has been studying existence results in other situations apart from the simply connected one. In \cite{GT1} a positive result is also given for spin closed manifolds with $\pi_1=\Z_2$, and spin closed manifolds with finite fundamental group of odd order are studied in ~\cite{GT2}. On the other hand there is also a construction of contact structures on an orientable $5$--manifold occurring as a product of two lower dimensional manifolds by Geiges and Stipsicz~\cite{GS}. While Geiges used the topological classification of simply connected manifolds for his results in ~\cite{Ge1}, one of the ingredients in \cite{GS} is a decomposition result of a $4$--manifold into two Stein manifolds with common contact boundary ~\cite{AM}, ~\cite{Bk}.\\

Being an almost contact manifold is a purely topological condition. In fact, the reduction of the structure group can be studied via obstruction theory. For example, in the $5$--dimensional situation a manifold $M$ is almost contact if and only if the third integral Steifel--Whitney class $W_3(M)$ vanishes. Actually, using this hypothesis and the classification of simply connected manifolds due to D. Barden ~\cite{Ba}, H. Geiges deduces that any manifold with $W_3(M)=0$ can be obtained by Legendrian surgery from certain model contact manifolds. Though this approach is elegant, it seems quite difficult to extend these ideas to produce contact structures on any almost contact $5$--manifold. We therefore propose a different approach: the existence of an \emph{almost contact pencil} structure on the given almost contact manifold is the required topological property to produce a contact structure. The tools appearing in our proof use techniques from three different sources:\\
\begin{itemize}
\item[-] The approximately holomorphic techniques developed by Donaldson in the
symplectic setting \cite{Do1,Do2} and adapted in \cite{IMP,Na} to the contact
setting to produce the so--called \emph{quasi contact pencil}.\\

\item[-] Eliashberg's classification of overtwisted $3$--dimensional manifolds \cite{El} to produce overtwisted contact structures on the fibres of the pencil.\\

\item[-] The canonical structure of the space of contact elements in a 3--manifold. See \cite{Lu2}.\\
\end{itemize}

\noindent Let us state the main result.

\begin{theorem} \label{main}
Let $M$ be a closed oriented $5$--dimensional manifold. There exists a contact structure in every homotopy class of almost contact structures.
\end{theorem}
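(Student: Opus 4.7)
The plan is to construct the desired contact structure from a compatible pencil structure on $M$, in the spirit of the three ingredients outlined in the introduction. Fix once and for all an almost contact structure $\xi_0$ in the given homotopy class. First, I would apply the approximately holomorphic contact package of \cite{IMP} to produce an \emph{almost contact pencil}: a map $f : M \setminus B \to \CP^1$ with a $1$-dimensional base locus $B$, finitely many non-degenerate Lefschetz-type critical points, and smooth fibres that are $3$-dimensional almost contact submanifolds compatible with $\xi_0$. The restriction of $\xi_0$ to a regular fibre is thus a cooriented $2$-plane field on a closed oriented $3$-manifold.

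Next, I would fibrewise produce contact structures and assemble them. By Lutz--Martinet each regular fibre $F_z := f^{-1}(z)$ carries a contact structure $\xi_z$ homotopic to $\xi_0|_{F_z}$, and by performing generalized Lutz twists inside $M$ in the sense of \cite{EP} I would arrange each $\xi_z$ to be overtwisted while leaving the global homotopy class of almost contact structure unchanged. Eliashberg's classification then rigidifies each $\xi_z$ up to isotopy in terms of its formal data, and since $\CP^1$ is simply connected this produces a coherent fibrewise contact form $\alpha_{\mathrm{ver}}$. The candidate global $1$-form would then be of the Donaldson-rescaling shape
\[
\alpha_K \;=\; \alpha_{\mathrm{ver}} + K \, \beta ,
\]
where $\beta$ is a horizontal $1$-form built from $f$ and $K \gg 0$; for $K$ large enough $\alpha_K$ is contact on the complement of a small neighbourhood of $B$ and of the Lefschetz critical points, because the vertical contact term dominates the horizontal error.

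Finally, one has to match $\alpha_K$ with explicit local contact models near $B$ (where $B$ becomes a transverse link) and near each Lefschetz singularity (via a standard contact handle model), and then verify that the resulting structure sits in the prescribed homotopy class. I expect this last step to be the main obstacle. Gluing on the overlap regions requires essentially a \emph{parametric} form of Eliashberg's classification, so that the fibrewise overtwisted structures can be chosen in a family that is smooth in $z \in \CP^1$ and simultaneously compatible with the rigid local models forced at the singular set; this is exactly the point at which the flexibility furnished by the generalized Lutz twist and by overtwistedness, in the sense of Niederkr\"uger, replaces the rigidity one would encounter in a purely holomorphic construction. Throughout the construction the underlying almost contact homotopy class is tracked by obstruction theory on $M^5$, and any discrepancy with $\xi_0$ can be absorbed by one further Lutz twist along a transverse $3$-submanifold, which modifies the formal invariants in a controlled way without damaging the contact condition.
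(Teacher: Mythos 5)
There is a genuine gap at the heart of your proposal: the step where you claim that $\alpha_K=\alpha_{\mathrm{ver}}+K\beta$ is contact away from $B$ and the critical set ``for $K$ large enough, because the vertical contact term dominates the horizontal error.'' The contact condition here is not a domination estimate but a \emph{positivity} condition on the horizontal term. Writing the structure locally over a disk in the base as $\ker(\alpha_v+H\,d\theta)$, one computes (as in Lemma \ref{lem:contactness_condition}) that $\alpha\wedge(d\alpha)^2>0$ forces a monotonicity condition of the type $\partial_r H>0$; equivalently, the curvature of the induced contact connection must be positive. Rescaling $\beta\mapsto K\beta$ does not create this positivity, and no globally defined $1$--form $\beta$ pulled back from $\sS^2$ can have $d\beta$ positive everywhere, since $\int_{\sS^2}d\beta=0$. (In Donaldson/Giroux--type constructions the positivity of the horizontal term comes from approximate holomorphicity with respect to an \emph{ambient} contact form that already exists; here there is none, which is the whole point.) This is precisely the difficulty the paper spends Section \ref{sec:bands} resolving: the boundary data $H(p,1,\theta)$ over a $2$--cell can be negative, so one cannot interpolate radially to a contact form. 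The fix is an explicit contact model on $F\times\sS^2$ built from a global framing of $TF$ (which requires arranging $c_1(\xi_F)=0$ on the fibres --- a normalization your proposal never imposes), a fibre connected sum, and a branched-covering trick producing arbitrarily large standard contact bands into which the given boundary data can be embedded (Lemma \ref{lem:emb}). Without something playing this role, your construction does not close up over the $2$--cells of the base.

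Several secondary points also need repair. The approximately holomorphic machinery of \cite{IMP} requires a \emph{closed} $2$--form taming $\xi$, so one must first homotope the almost contact structure to a quasi--contact one (Lemma \ref{lem:almost-quasi}) before any pencil exists. The critical locus of such a pencil is a link of transverse \emph{curves}, not finitely many isolated Lefschetz points, so $f(C)\subset\CP^1$ is a collection of immersed curves and the base must be decomposed accordingly; your ``coherent fibrewise contact form because $\CP^1$ is simply connected'' elides the cell-by-cell relative application of the parametric Eliashberg theorem (Theorem \ref{thm:eliash} and Corollaries \ref{coro:eliash}, \ref{coro:eliash2}), which in turn needs a continuously varying family of overtwisted disks --- supplied in the paper by sections near an exceptional divisor after blowing up $B$ along its transverse circles, another step absent from your outline (as is the final blow-down with matched framings). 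Finally, ``absorbing any discrepancy by one further Lutz twist along a transverse $3$--submanifold'' is not a substitute for tracking the obstruction class: the paper keeps the homotopy class fixed throughout by checking the primary obstruction on a $2$--skeleton (Lemmas \ref{lem:2sk} and \ref{lem:split}), in particular in the $F\times\sS^2$ model and through the blow-up/blow-down.
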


\noindent In particular closed oriented almost contact $5$--manifolds are contact. It is important to emphasize that using the techniques developed in this article, it is not possible to conclude anything about the number of distinct contact distributions that may occur in a given homotopy class of almost contact distributions. The result states that there is at least one, the article \cite{Pr2} provides examples with more. It follows from the construction that the contact structure is PS--overtwisted \cite{Ni,NP} and therefore it is non--fillable.

\begin{remark}
The data given by an almost contact structure is tantamount to that of a hyperplane subbundle of the tangent bundle endowed with a complex structure \cite{Ge}. An almost contact structure will refer to either the reduction of the structure group or to such distribution. In the course of the article the distributions are supposed to be coorientable and Section \ref{sec:non-coorientable} contains the corresponding results for non--coorientable distributions.
\end{remark}

\noindent The proof of Theorem \ref{main} consists of a constructive argument in which we obtain the contact condition step by step. These steps correspond to the sections of the paper as follows:\\

\begin{itemize}
\item[-] To begin with, we explain how to produce over any almost contact $5$--manifold $(M, \xi)$ an almost contact fibration over $\sS^2$ with singularities of some standard type. It is defined on the complement of a link. The definition and properties of this almost contact fibration -- in fact, an almost contact pencil -- is the content of Sections  \ref{sec:quasi} and \ref{sec:pencils}. The details of the actual construction are not provided and the reader is referred to \cite{IbortDMT2, MT, Na} for the proofs. The existence of such a pencil is the input data of this article. \\

\item[-] In Section \ref{sec:defor_local}, we produce a first deformation of the almost contact structure $\xi$ to obtain a contact structure in a neighborhood of the singularities of the fibration and in a neighborhood of the link.\\

\item[-] The neighborhood of the link has the structure of a base locus of a pencil occurring in algebraic or symplectic geometry. In order to provide a Lefschetz type fibration we blow--up the base locus. This requires the notion of a contact blow--up. For the purposes of the article, it will be enough to define an appropriate contact surgery of the $5$--manifold along a transverse $\sS^1$. This is the content of Section \ref{sec:blowup}.\\

\item[-] Away from the critical points the distribution splits as $\xi= \xi_v \oplus \SH$, where $\xi_v$ is the restriction of the distribution to the fibres and $\SH$ is the symplectic orthogonal. Section \ref{sec:vertical} deals with a deformation of $\xi_v$ to produce a contact structure in the fibres. It strongly uses the classification of overtwisted contact manifolds due to Eliashberg \cite{El}.\\

\item[-] In Section \ref{sec:skeleton} we begin to deform the horizontal direction $\SH$. This is done in two steps. Given a suitable cell decomposition of the base $\sS^2$, we first deform $\SH$ in the pre--image of a neighborhood of the $1$--skeleton. Section \ref{sec:skeleton} contains this first step.\\

\item[-] The contact condition still has to be achieved in the pre--image of the $2$--cells. This is the second step. The contact structure used in order to fill the pre--image of the $2$--cells is constructed in Section \ref{sec:bands}. This construction uses the contact structure of the space of contact elements of the 3--dimensional fibre. \\

\item[-] In Section \ref{sec:end} we obtain a contact structure on the surgered 5--manifold using the results obtained in Section \ref{sec:bands}. Then we reverse the blow--up surgery and construct the contact structure on the initial $5$--manifold. Theorem \ref{main} is concluded.\\

\item[-] In Section \ref{sec:non-coorientable} we deal with the case of non--coorientable distributions. We introduce the suitable definitions and explain the non--coorientable version of Theorem \ref{main}.\\
\end{itemize}

\noindent The more technical results on this article are contained on Sections 5, 6 and 8. Section 7 (resp. Section 9) is also essential but the exposition can be made less technical and the reader should be able to readily comprehend it once Sections 5 and 6 (resp. Section 8) are understood. Section 6 and 7 can be understood without Section 5 and Section 8 can be read almost independently.\\

\noindent The work in this article was presented in the Spring 2012 AIM Workshop on higher dimensional contact geometry. In its course, J. Etnyre commented on a possible alternative approach in the framework of Giroux's program using an open book decomposition. The argument has been subsequently written and it is the content of the article \cite{Et}.\\

\noindent{\bf Acknowledgements.} The authors are grateful to Y. Eliashberg, J. Etnyre, E. Giroux and H. Geiges for valuable conversations. We are also indebted to the referee for meaningful suggestions. The second author is also grateful to M.S. Narasimhan and T.N. Ramadas for their constant support and encouragement. The proof of Theorem \ref{thm:dishant} was outlined to us by Y. Eliashberg. The original work lacked the construction of the homotopy in the case that $2$--torsion existed in $H^2(M,\Z)$. This case was proven after a useful discussion with J. Etnyre at the AIM Workshop. The present work is part of the authors activities within CAST, a Research Network Program of the European Science Foundation.

\section{Preliminaries.} \label{sec:quasi}

\subsection{Quasi--contact structures} Let $M$ be an almost contact manifold. There exists a
choice of a symplectic distribution $(\xi, \omega) \subset TM$ for such a
manifold. Namely, we can
find a $2$--form $\eta$ on $\xi$ with the property that $\eta$ is
non--degenerate and
compatible with the almost complex structure $J$ defined on $\xi.$  By extending
$\eta$ to a form on $M$  we can find a $2$--form
$\omega$ on $M$ such that $(\xi, \omega_{|\xi})$ becomes a symplectic
vector bundle. This form $\omega$ is not necessarily closed. The triple $(M, \xi, \omega)$ is also said to be an almost contact manifold. In other words, an almost contact structure is meant to be a triple $(\xi,J,\omega)$ for some $\omega$ as discussed. The choice of almost complex structure $J$ is homotopically unique and it might be omitted. An almost contact manifold is subsequently described by a triple $(M,\xi,\omega)$.\\

\noindent In order to construct a contact structure out of an almost contact one, the
first step is to provide a better $2$--form on $M.$ That is, we replace $\omega$ by a closed
$2$--form.

\begin{definition}
A manifold $M^{2n+1}$ admits  a {\it quasi--contact structure} if there exists
a pair $(\xi, \omega)$ such that $\xi$ is a codimension $1$--distribution and
$\omega$ is a closed $2$--form 
on $M$ which is non--degenerate when restricted to $\xi.$
\end{definition}

\noindent Notice that a quasi--contact pair $(\xi,\omega)$ admits a compatible almost contact
structure, i.e. there exists a $J$ which makes $(\xi,J,\omega)$ into an almost contact
structure. These manifolds have also been called {\it
$2$--calibrated}~\cite{IbortDMT} in the literature. The following lemma justifies the appearance of the previous definition:

\begin{lemma} \label{lem:almost-quasi}
Every almost contact manifold $(M, \xi_0, \omega_0)$ admits a quasi--contact
structure $(\xi_1, \omega_1)$ homotopic to $(\xi_0, \omega_0)$ through
symplectic distributions and the class $[\omega_1]$ can be fixed to be any
prescribed cohomology class $a\in H^2(M, \R)$.
\end{lemma}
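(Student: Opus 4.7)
My plan is to keep $\xi_1 := \xi_0$ and to construct $\omega_1$ by modifying a closed representative $\tau$ of the prescribed class $a$ by an exact $2$--form, producing the required homotopy as the straight line between $\omega_0$ and $\omega_1$. The compatible almost complex structure $J$ on $\xi_0$ is used throughout: the set of $J$--tame $2$--forms on $\xi_0$ is open and convex, and every such form is non-degenerate.

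\textbf{Setup and local construction.} Fix $J \in \End(\xi_0)$ tamed by $\omega_0|_{\xi_0}$ and a closed $\tau \in \Omega^2(M)$ representing $a$. Take a locally finite contractible cover $\{U_i\}$ of $M$ with coordinates $(t, x)$ on each $U_i$ in which $\xi_0 = \ker dt$. A Poincar\'e--lemma computation in these coordinates shows that any closed $2$--form on $U_i$ restricts to $\xi_0$ as a $d_x$--closed family of $2$--forms on the fibres, and conversely every such family arises from a closed $2$--form on $U_i$. Since $U_i$ is contractible, any two closed representatives on $U_i$ differ by an exact form, so for any $d_x$--closed family of $J$--tame $2$--forms on $\xi_0|_{U_i}$ one can find $\beta_i \in \Omega^1(U_i)$ realising it as $(\tau + d\beta_i)|_{\xi_0}$; in particular we may choose such a $\beta_i$ on each $U_i$.

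\textbf{Gluing.} For a partition of unity $\{\rho_i\}$ subordinate to the cover, set $\beta := \sum_i \rho_i \beta_i$ (each summand extended by zero off $U_i$) and $\omega_1 := \tau + d\beta$. Then $\omega_1$ is closed of class $a$, and using $\sum_i \rho_i \equiv 1$ one obtains
\[
\omega_1 \;=\; \sum_i \rho_i\,(\tau + d\beta_i) \;+\; \sum_i d\rho_i \wedge \beta_i .
\]
The first summand restricts on $\xi_0$ to a convex combination of $J$--tame forms, hence $J$--tames with a positive margin. By refining the cover and choosing each $\beta_i$ to vanish at the centre of $U_i$, $\|\beta_i\|_{C^0}$ becomes small while $d\beta_i$ may still be taken large (so that the taming margin is large); using the bounded multiplicity of the cover, the error term $\sum_i d\rho_i \wedge \beta_i$ can be forced below the taming margin, and $\omega_1|_{\xi_0}$ tames $J$, hence is non-degenerate.

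\textbf{Homotopy and main obstacle.} With $\xi_t := \xi_0$ and $\omega_t := (1-t)\omega_0 + t \omega_1$, both endpoints tame the common $J$ on $\xi_0$, and $J$--taming is convex, so $\omega_t|_{\xi_0}$ is non-degenerate for every $t$, giving the required homotopy through symplectic distributions from $(\xi_0, \omega_0)$ to the quasi--contact structure $(\xi_1, \omega_1) = (\xi_0, \omega_1)$ in class $a$. The technical heart is the gluing estimate: balancing the $J$--taming margin against the partition-of-unity error $\sum_i d\rho_i \wedge \beta_i$. I would handle this by a careful scale analysis (adapted cover, centred local primitives, balanced norms of $d\beta_i$ and $d\rho_i$), and this is the step demanding the most care.
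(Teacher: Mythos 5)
There is a genuine gap, and in fact the opening move of your strategy --- keeping $\xi_1:=\xi_0$ and only changing the $2$--form --- cannot work in general. Take $N$ a closed almost complex $4$--manifold admitting no symplectic structure (e.g.\ $N=\CP^2\#\CP^2\#\CP^2$, which is almost complex by Wu's criterion but non--symplectic by Taubes), and consider the almost contact manifold $M=N\times\sS^1$ with $\xi_0=TN\times\{0\}=\ker d\theta$ and $\omega_0$ any non--degenerate (non--closed) $2$--form on $\xi_0$. A \emph{closed} $2$--form $\omega_1$ on $M$ that is non--degenerate on $\xi_0$ would restrict to each closed leaf $N\times\{pt\}$ as a symplectic form, which is impossible. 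So for this $(M,\xi_0,\omega_0)$ no choice of $\omega_1$, in any cohomology class, makes $(\xi_0,\omega_1)$ quasi--contact: the distribution itself must move. This is exactly why the paper's proof does not fix $\xi_0$; it extends $\omega_0$ to a non--degenerate form on the \emph{open} manifold $M\times\R$, invokes Gromov's $h$--principle for open symplectic manifolds to homotope it to a closed form in the class $\pi^*a$, and then recovers $\xi_t$ a posteriori as the $g$--orthogonal complement of $\ker\omega_t$. The homotopy of the hyperplane field is not a convenience but the essential content.

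Independently of this, the gluing estimate you flag as ``the step demanding the most care'' actually fails, and no scale analysis rescues it. If the cover has scale $\varepsilon$, then $\|d\rho_i\|\sim\varepsilon^{-1}$, while centring $\beta_i$ gives $\|\beta_i\|_{C^0}\lesssim\varepsilon\,\|d\beta_i\|_{C^0}$; hence $\|d\rho_i\wedge\beta_i\|\lesssim\|d\beta_i\|_{C^0}$, which is of the \emph{same order} as the taming margin of $\sum_i\rho_i(\tau+d\beta_i)$, uniformly in $\varepsilon$. Making $d\beta_i$ larger to increase the margin increases the error by the same factor. This is the classical obstruction to producing closed non--degenerate forms by partition--of--unity patching, and it is precisely the point where an $h$--principle (or a Thurston/Donaldson--type global construction) is unavoidable. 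As a smaller remark, your local analysis in terms of ``$d_x$--closed families of forms on the fibres'' presupposes that $\xi_0=\ker dt$ is integrable on each chart, which a general almost contact distribution is not; locally one can still choose $\beta_i$ with $(\tau+d\beta_i)|_{\xi_0}$ taming $J$, but the fibrewise language should be dropped.
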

\begin{proof}
Let $j: M \longrightarrow M \times \R$ be the inclusion as the zero section. Consider a not--necessarily closed $2$--form $\widetilde{\omega}_0$, such that
$\omega_0= j^* \widetilde{\omega}_0$. Fix a Riemannian metric $g$ over $M$ such that
$\xi_0$ and $\ker \omega_0$ are $g$--orthogonal.\\

\noindent Apply Gromov's classification result of open symplectic manifolds
to produce a $1$--parametric family $\{ \widetilde{\omega}_t \}_{t=0}^1$ of
{\it symplectic} forms  such that for $t=1$ the form is closed. See \cite{EM}, Corollary 10.2.2. Let $\pi: M \times \R \longrightarrow M$ be the projection and choose the cohomology class defined by $\widetilde{\omega}_1$ to be $\pi^* a$. Consider  the family of $2$--forms $\omega_t= j^* \widetilde{\omega}_t$ on $M.$
Since $\widetilde{\omega_t}$ is non--degenerate on $M \times \mathbb{R}$ for each $t$,
the form $\omega_t$ has $1$--dimensional kernel $\ker \omega_t$. Define $\xi_t=
(\ker \omega_t)^{\perp g}$. Then $(\xi_t, \omega_t)$ provides the required
family.
\end{proof}

This is the farthest one can reach by the standard $h$--principle argument in
order to find contact structures on a closed manifold. One can start
with the almost contact bundle $\xi= \ker \alpha$ and use Lemma \ref{lem:almost-quasi} to find a $2$--form $d\beta$
such that $(\xi,d\beta)$ is a symplectic bundle, but there is in general no way to relate $\alpha$ and $\beta$. This is the aim of the article.

\subsection{Obstruction theory} The content of Theorem \ref{main} has two parts. The statement implies the existence of a contact structure in an almost contact manifold. This is a result in itself, regardless of the homotopy type of the resulting almost contact distribution. The construction we provide in this article also concludes that the obtained contact distribution lies in the same homotopy class of almost contact distributions as the original almost contact structure. This is achieved via the study of an obstruction class. Let us review some well--known facts. \\

\noindent Let $M$ be a smooth oriented $5$--manifold and $\pi:TM\longrightarrow M$ its tangent bundle. The projection $\pi$ is considered to be an $SO(5)$--principal frame bundle. An almost contact structure is a reduction of the structure group $G=SO(5)$ to a subgroup $H\cong U(2)\times\{1\}\cong U(2)$. The isomorphism classes of almost contact structures are parametrized by the homotopy classes of such reductions. A reduction of the structure group $G$ to a subgroup $H$ is tantamount to a section of a $G/H$--bundle over $M$. Hence the classification of almost contact structures on $M$ is reduced to the study of homotopy classes of sections of a $SO(5)/U(2)$--bundle over $M$.

\begin{lemma}
There exists a diffeomorphism $SO(5)/U(2)\cong\CP^3$.
\end{lemma}
\noindent See \cite[Prop. 8.1.3]{Ge} for the proof of this Lemma. \\

\noindent The homotopy groups $\pi_i(\CP^3)=0$ for $1\leq i\leq6$, $i\neq2$, hence the existence of sections of a fibre bundle with typical fibre $\CP^3$ over the $5$--manifold $M$ is controlled by the primary obstruction class $d=W_3(M)\in H^3(M,\pi_2(\CP^3))\cong H^3(M,\Z)$. The hypothesis of Theorem \ref{main} is $d=0$.\\

\noindent Let $s_\xi$ and $s_{\xi'}$ be two sections of this $\CP^3$--bundle. The obstruction class dictating the existence (or the lack thereof) of a homotopy between them is the primary obstruction $d(\xi,\xi')\in H^2(M,\Z)$. The obstruction theory argument can be made relative to a submanifold $A\subset M$. Given a self--indexing Morse function for the pair $(M,A)$, we consider the relative $j$--skeleton $M_j$ defined as the union of $A$ and the cores of the handles of the critical points of index less or equal than $j$. We have the following
\begin{lemma}\label{lem:2sk}
Consider a relative 2--skeleton $M_2$ for the pair $(M,A)$ and let $s_\xi$, $s_{\xi'}$ be two sections of a $\CP^3$--bundle over $M$ that are homotopic over $M_2$. Then $s_\xi$ and $s_{\xi'}$ are also homotopic over $(M,A)$.
\end{lemma}
\noindent Let $(M,\xi)$ be an almost contact structure, the construction of the contact structure $\xi'$ obtained in Theorem \ref{main} does not modify the homotopy class of the given section, i.e. $s_\xi\sim s_{\xi'}$. In Section \ref{sec:bands} we provide a detailed account on the modification of the obstruction class $d(\xi,\xi')$ in the $2$--skeleton of certain pieces of $M$ where $\xi'$ has been constructed. This is enough to conclude that $d(\xi,\xi')=0$ once $\xi'$ is extended to $M$ in Section \ref{sec:end}.

\subsection{Homotopy of vector bundles} The argument constructing the homotopy between the initial almost contact structure and the resulting contact distribution in Theorem \ref{main} uses the following lemma. It is used in several parts of Sections \ref{sec:defor_local} to \ref{sec:end}.\\

\noindent Let $(V,\omega)$ be an oriented vector space of dimension $\dim_\R V=4$. Consider an splitting $V=V_0\oplus V_1$ with $V_0,V_1$ two oriented $2$--dimensional vector subspaces. Since $Sp(2,\R)/SO(2)$ is contractible, the space of symplectic structures on $V$ such that $V_0$ and $V_1$ are symplectic orthogonal subspaces is contractible. This essentially implies the following

\begin{lemma}\label{lem:split}
Let $M$ be an almost contact $5$--manifold, $A$ an open submanifold of $M$, and $(\xi_0,\omega_0),(\xi_1,\omega_1)$ two almost contact structures on $M$ such that there exists a homotopy $\{\xi_t\}$ of oriented distributions on $(M,A)$ connecting $\xi_0$ and $\xi_1$. Suppose that there exist $L_0$ and $L_1$ two rank--$2$ symplectic subbundles of $\xi_0$ and $\xi_1$ and a homotopy $\{L_t\}\subset\{\xi_t\}$ of oriented distributions connecting $L_0$ and $L_1$ on $(M,A)$. Then there is a path $\{\omega_t\}$ of symplectic structures on $\{\xi_t\}$ such that $\{(\xi_t,\omega_t)\}$ is a path of almost contact structures connecting
 $(\xi_0, \omega_0)$ and $(\xi_1, \omega_1)$ on $(M,A)$.
\end{lemma}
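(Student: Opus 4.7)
The plan is to construct the symplectic forms $\omega_t$ on the given family $\xi_t$ fibrewise, by first fixing an auxiliary splitting $\xi_t = L_t\oplus L_t^c$ which is $\omega_t$--orthogonal at $t=0,1$, and then invoking contractibility to interpolate.

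First, I would produce a continuous family of rank--$2$ subbundles $L_t^c\subset\xi_t$ complementary to $L_t$, with $L_0^c=L_0^{\omega_0}$ and $L_1^c=L_1^{\omega_1}$ (the respective symplectic orthogonals), and constant in $t$ over $A$. For a fixed rank--$2$ subbundle $L_t\subset \xi_t$, the space of rank--$2$ complements is an affine bundle over $M$ with fibres modelled on $\Hom(L_t,\xi_t/L_t)$, hence contractible; so such an $L_t^c$ exists relative to the prescribed boundary data over $\{0,1\}\times M\cup [0,1]\times A$. Orient $L_t^c$ by continuous transport starting from $L_0^{\omega_0}$. The resulting orientation makes $L_t\oplus L_t^c$ agree with the fixed orientation of $\xi_t$ and matches that of $L_1^{\omega_1}$ at $t=1$, because the space of oriented complements to $L_1$ in $\xi_1$ forming an oriented direct sum is path--connected.

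Second, for each $(t,p)\in [0,1]\times M$ consider the space $\SE_t(p)$ of non--degenerate alternating $2$--forms on $\xi_t(p)=L_t(p)\oplus L_t^c(p)$ for which $L_t(p)$ and $L_t^c(p)$ are symplectic--orthogonal and each restricts to a positive area form. As in the discussion preceding the lemma, $\SE_t(p)$ is a product of two positive half--lines of area forms and is therefore contractible; this is the geometric content of the contractibility of $Sp(2,\R)/SO(2)$ cited above. The $\SE_t(p)$ assemble into a fibre bundle $\SE\to [0,1]\times M$ with contractible fibres. By the very choice of $L_t^c$ at the endpoints, the given forms $\omega_0$ and $\omega_1$ are sections of $\SE$ over $\{0\}\times M$ and $\{1\}\times M$. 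Over $[0,1]\times A$, where $\xi_t, L_t, L_t^c$ are constant in $t$, the bundle $\SE$ is $t$--trivial with contractible fibres, so a section joining $\omega_0|_A$ and $\omega_1|_A$ exists. Standard obstruction theory extends this boundary data to a global section $\{\omega_t\}$ of $\SE$, yielding the desired homotopy of symplectic distributions $(\xi_t,\omega_t)$.

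The main technical point I expect to be obstacle--prone is the orientation bookkeeping in Step~1, namely ensuring that the continuously transported orientation on $L_t^c$ simultaneously matches the orientation of $L_0^{\omega_0}$ and $L_1^{\omega_1}$ at the endpoints and makes $L_t\oplus L_t^c$ the oriented sum $\xi_t$. Once one observes that the space of such oriented complements is path--connected (in fact, has contractible fibres), this matching is automatic. After that, the rest of the argument is a routine application of obstruction theory to a bundle with contractible fibres, which is exactly what the remark preceding the lemma anticipates.
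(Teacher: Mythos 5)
Your proof is correct and follows essentially the same strategy as the paper's: both arguments produce a family of splittings $\xi_t=L_t\oplus L_t^c$ interpolating between the two symplectic--orthogonal splittings at $t=0,1$ (the paper by interpolating the fibrewise metrics $g_i=\omega_i(\cdot,J_i\cdot)$ through the contractible space of metrics and taking $g_t$--orthogonal complements, you by using the contractible affine space of complements directly), and then fill in the forms $\omega_t$ via the contractibility of the space of symplectic forms adapted to a fixed splitting, as in the remark preceding the lemma. Your write--up is if anything slightly more explicit about the final obstruction--theoretic extension step and the orientation matching, which the paper leaves implicit.
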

\begin{proof}
Consider $J_0$ and $J_1$ two compatible complex structures on the symplectic distributions $\xi_0$ and $\xi_1$ respectively. These define two fibrewise scalar--product structures
$$g_0=\omega_0(\cdot,J_0\cdot)\mbox{ and }g_1=\omega_1(\cdot,J_1\cdot)$$
on $\xi_0$ and $\xi_1$. The space of fibrewise scalar--product structures has contractible fibre, namely $Gl^+(4,\R)/SO(4)$, and thus it is contractible. Hence, there exists a homotopy $\{g_t\}$ of fibrewise scalar--products connecting $g_0$ and $g_1$. The scalar--product $g_t$ provides an orthogonal decomposition $\xi_t=L_t\oplus L_t^{\perp_{g_t}}$. The homotopy of oriented bundles $\{L_t\}$ induces a homotopy of oriented bundles $\{L_t^{\perp_{g_t}}\}$ respecting the symplectic splitting given by $\omega_0$ and $\omega_1$ on $\xi_0$ and $\xi_1$.
\end{proof}

\subsection{Notation.} Let $\R^{2n}$ be Euclidean space, $B^{2n}(r)=\{p\in\R^{2n}:\|p\|\leq r\}$ denotes the closed ball of radius $r$ centered at the origin. The $2$--dimensional balls are also referred to as disks and denoted by $\D^2(r)$. In case the radius is omitted $B^{2n}$ and $\D^2$ denote the ball and disk of radius $1$ respectively.
\section{Quasi--contact pencils.} \label{sec:pencils} 

Approximately holomorphic techniques have been extremely useful in symplectic
geometry. Their main application in contact geometry -- due to E. Giroux -- is
to establish the existence of a compatible open book for a contact manifold in
higher dimensions. See \cite{Co,Gi,Na}. An open book decomposition is a way of trivializing a
contact manifold by fibering it over $\sS^1$.  Such objects have also
been studied in the almost contact case, see ~\cite{DMTMPresas}.\\

\noindent There exists a construction \cite{Fran1} in the contact case analogous to the Lefschetz pencil decomposition introduced by Donaldson over a symplectic manifold \cite{Do2}. It is called a contact pencil and it allows us to express a contact manifold as a singular fibration over $\sS^2$. It has been extended in \cite{IbortDMT2, MT, Na} to the quasi--contact setting. Theorem \ref{thm:exist_pencil} and Corollary \ref{coro:starting_pencil} in this Section provide the existence of a quasi--contact pencil with suitable properties. Let us begin with the appropriate definitions.

\begin{definition}
An almost contact submanifold of an almost contact manifold $(M, \xi, \omega)$ is an embedded
submanifold $j: S \longrightarrow M$ such that the induced pair $(j^* \xi, j^* \omega)$ is an almost contact structure on $S$.
\end{definition}

\noindent A quasi--contact submanifold of a quasi--contact manifold is defined analogously. In particular this implies in both cases that the submanifold $S$ is transverse to the distribution $\xi$.\\

\noindent A chart $\phi :(U,p)  \longrightarrow V \subset (\C^n \times \R, 0)$ of an atlas of $M$ is compatible with the almost
contact structure $(\xi,\omega)$ at a point $p \in U \subset M$ if the push--forward at $p$ of $\xi_p$ by $\phi$ is $\C^n \times \{ 0\}$ and the $2$--form $\phi_*\omega(p)$ is a positive $(1,1)$--form with respect to the canonical almost complex structure.

\begin{definition} \label{def:almost_pencil}
An almost contact pencil on a closed almost contact manifold $(M^{2n+1}, \xi,
\omega)$ is a triple $(f,B,C)$ consisting of a codimension--$4$ almost contact
submanifold $B$, called the base locus, a finite set $C$ of smooth
transverse curves and a map $f:M\backslash B\longrightarrow \C\mathbb{P}^1$
conforming the following conditions:
\begin{itemize}
\item[(1)] The map $f$ is a submersion on the complement of $C$ and the fibres $f^{-1}(p)$, for any $p\in \CP^1$, are almost contact
submanifolds at the regular points.
\item[(2)] The set $f(C)$ is a finite union of locally smooth curves with transverse self--intersections.
\item[(3)] At a critical point $p\in C\subset M$ there exists a compatible chart $\phi_p$ such that $$(f\circ \phi_p^{-1})
(z_1,\ldots,z_n,s)=f(p)+z_1^2+\ldots+z_n^2+g(s)$$ where $g:(\R,0)\longrightarrow (\C,0)$ is
an immersion at the origin.
\item[(4)] Each $b\in B$ has a compatible chart to $(\C^n \times
\R,0)$ under which $B$ is locally cut out by $\{z_1=z_2=0\}$ and $f$ corresponds
to the projectivization of the first two coordinates, i.e. locally $\displaystyle f(z_1, \ldots, z_n,
t)= \frac{z_2}{z_1}$. 

\end{itemize}
\end{definition}

\begin{remark}
Quasi--contact pencils for quasi--contact manifolds and
contact pencils for contact manifolds are defined by replacing the expression almost
contact by the suitable one in each case.
\end{remark}

\noindent The generic fibres of $f$ are open almost contact submanifolds and the closures
of the fibres at the base locus are smooth. This is because the local model
$(4)$ in the Definition \ref{def:almost_pencil} is a parametrized elliptic
singularity and the fibres come in complex lines $\{\displaystyle z_2=const\cdot z_1\}$
joining at the origin. We refer to the compactified
fibres so constructed as the fibres of the pencil. See Figure \ref{fig:base_points}.\\

\noindent In dimension $5$, each compactified smooth fibre is a smooth 3--manifold containing $B$ as a link and any two different compactified fibres intersect transversely along $B$. Note that if we remove a tubular neighborhood of $C$ in $M$ the compactified fibre over a neighborhood of a point in $f(C)$ becomes a smooth manifold whose boundary is a (union of) 2--tori. This boundary components can be filled by solid tori at any regular fibre.

\begin{figure}[ht]
\includegraphics[scale=0.6]{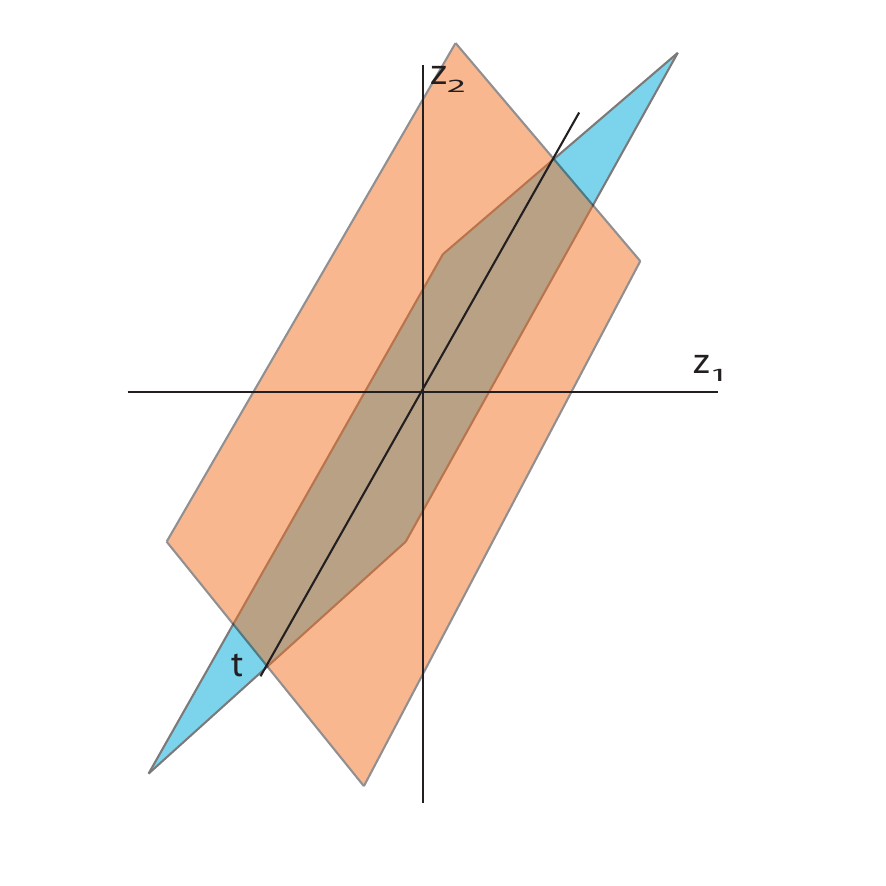}
\caption{Fibres close to the base locus $B=\{z_1=z_2=0\}$.}\label{fig:base_points}
\end{figure}

Notice that the set of critical values $\Delta= f(C)$ are no longer points, as
in the symplectic case, but immersed curves. This is because of Condition $(3)$
in the Definition \ref{def:almost_pencil}. In particular, the usual isotopy
argument between two fibres does not apply unless their images are in the same
connected component of $\CP^1\backslash \Delta$. This has been studied in the contact and quasi--contact cases. The set $C$ is a positive link and therefore $\Delta$ is also oriented. There is a partial order in
the complement of $\Delta$: a connected component $P_0$ is less or equal than
a connected component $P_1$ if $P_0$ and $P_1$ can be connected by an oriented path $\gamma \subset \CP^1$ intersecting $\Delta$ only with positive crossings. The proposition that follows has only been proved for the contact and quasi--contact cases. An analogous statement probably remains true in the almost contact setting. It is provided to offer some geometric insight about contact and quasi--contact pencils, it is not used in the rest of the article.

\begin{proposition} [Proposition $6.1$ of ~\cite{Fran1}] \label{prop:crossing}
Let $M$ be a quasi--contact manifold equipped with a quasi--contact pencil $(f, B,
C)$. Then if two regular values of $f$, $P_0$ and $P_1$, are separated by a unique
curve of $\Delta$ then the two corresponding fibres $F_0=\overline{f^{-1}(P_0)}$
and $F_1=\overline{f^{-1}(P_1)}$ are related by an index $n-1$ surgery.\\

\noindent Suppose that the manifold and the pencil are contact, then the surgery is Legendrian and it attaches a Legendrian sphere to $F_0$ if $P_0$ is
smaller than $P_1$. See Figure \ref{fig:crossing}.
\end{proposition}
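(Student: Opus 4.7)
The proposition is a Picard--Lefschetz statement adapted to the quasi--contact setting, where critical values form curves rather than isolated points. I would organise the argument in three steps, with the contact refinement as the delicate part.

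\emph{Localisation.} Pick a smooth path $\sigma:[0,1]\to\CP^1$ from $P_0$ to $P_1$ meeting $\Delta$ transversely at a single point $\sigma(1/2)=f(p)$, where $p\in\gamma\subset C$. On the complement of $C$, $\xi$ is transverse to the fibres of $f$ and contains the symplectic vertical distribution $\xi_v=\xi\cap TF$; its $\omega$-symplectic orthogonal $\SH\subset\xi$ defines a parallel transport that trivialises $f$ over $\sigma|_{[0,1/2)}$ and $\sigma|_{(1/2,1]}$. Hence $F_0$ and $F_1$ are canonically diffeomorphic outside an arbitrarily small neighbourhood of $p$, and the entire topological change is concentrated in the local chart at $p$.

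\emph{Local computation.} In the chart of Definition \ref{def:almost_pencil}(3), place $f(p)=0$ and, after a real linear change of $s$, take $g(s)=is$, so $\Delta=i\R$ locally. Choose $\sigma$ to cross $0$ along the real axis and write $z_j=x_j+iy_j$. The equation $z^2+is=\tilde q$ for $\tilde q\in\R$ splits into $|x|^2-|y|^2=\tilde q$ and $s=-2\langle x,y\rangle$, so $F_{\tilde q}$ is diffeomorphic to the real quadric $\{|x|^2-|y|^2=\tilde q\}\subset\R^{2n}$. For $\tilde q>0$ the map $(x,y)\mapsto(x/|x|,y)$ identifies this quadric with $S^{n-1}\times\R^n$, carrying the vanishing sphere $V_+=\{|x|^2=\tilde q,\,y=0\}\iso S^{n-1}$; for $\tilde q<0$ one obtains $\R^n\times S^{n-1}$ with the dual vanishing sphere $V_-=\{x=0,\,|y|^2=-\tilde q\}\iso S^{n-1}$. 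As $\tilde q$ decreases through $0$, the tubular neighbourhood $V_+\times\D^n$ is crushed onto the nodal critical fibre and reopens as $\D^n\times V_-$. Combined with the localisation step, this is exactly the surgery of $F_0$ along the $(n-1)$-sphere $V_+$ asserted by the proposition.

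\emph{Contact refinement, and main obstacle.} If $(M,\xi)$ and the pencil are contact, each smooth fibre inherits the contact structure $\xi_v$. The aim is to sharpen the compatible chart at $p$ to a simultaneous Darboux-type normal form for the contact form $\alpha$ and the quadratic part of $f$, so that the real slice $\{y=0\}$ is isotropic for $d\alpha|_\xi$ and $V_+$ is Legendrian in $(F_0,\xi_v)$. The Lefschetz thimble $\{y=0,\,|x|^2\le\varepsilon\}$ bounded by $V_+$ then plays the role of a Weinstein cocore, and the $\omega$-orientation of the critical curve together with the outward direction of the thimble fix the sign so that the handle is attached to $F_0$ precisely when $P_0<P_1$. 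The serious difficulty is producing this simultaneous $(\alpha,f)$ normal form: Definition \ref{def:almost_pencil} constrains only $f$ and $\omega$ in the chart, so an extra contact-geometric adjustment is needed. Once the normalisation is in place, the identification of the thimble with a Weinstein handle is a standard argument in the style of Cieliebak--Eliashberg, and the first two steps are a routine transposition of Picard--Lefschetz from isolated symplectic critical points to codimension-one critical curves of a quasi-contact pencil.
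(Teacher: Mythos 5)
This proposition is quoted from \cite{Fran1} (Proposition 6.1 there) and the present paper supplies no proof of it; the surrounding text explicitly remarks that the result ``is provided to offer some geometric insight\ldots{} it is not used in the rest of the article.'' So there is no in--paper argument to compare against, only the original reference.

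Your smooth argument is the standard Picard--Lefschetz analysis and its conclusion is correct: parallel transport along $\SH$ localises the change at $p$, and the real quadric $\{|x|^2-|y|^2=\tilde q\}$ identifies the topological change as surgery along the $(n-1)$--sphere $\{|x|^2=\tilde q,\ y=0\}$. One small inaccuracy: a real reparametrisation of $s$ together with a rotation of the target can only arrange $g(s)=is+O(s^2)$, so $\Delta$ is tangent to $i\R$ but not equal to it near $0$; either flatten it with a non--linear change of target coordinate or note that the surgery conclusion is stable under such perturbations. The genuine gap is the one you flag in the contact refinement. Definition~\ref{def:almost_pencil} constrains $\xi_p$ and $\omega(p)$ only \emph{at} the critical point, and this pointwise compatibility is not enough to force the vanishing cycle to be Legendrian in $(F_0,\xi_v)$ or the thimble to be isotropic. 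In \cite{Fran1} this is supplied by the construction of the pencil itself: the approximately holomorphic machinery yields quantitative transversality estimates forcing the local geometry near a critical point to be $C^1$--close to the integrable K\"ahler model, and it is those estimates --- not Definition~\ref{def:almost_pencil} alone --- that make the vanishing sphere isotopic to a Legendrian one and identify the thimble with a Weinstein handle. Your plan therefore needs either those quantitative estimates or a separate normalisation lemma (a Gray--Moser argument putting $\alpha$ and the quadratic part of $f$ into simultaneous normal form along the critical curve) before the ``surgery is Legendrian'' conclusion follows.
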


\begin{figure}[ht]
\includegraphics[scale=0.6]{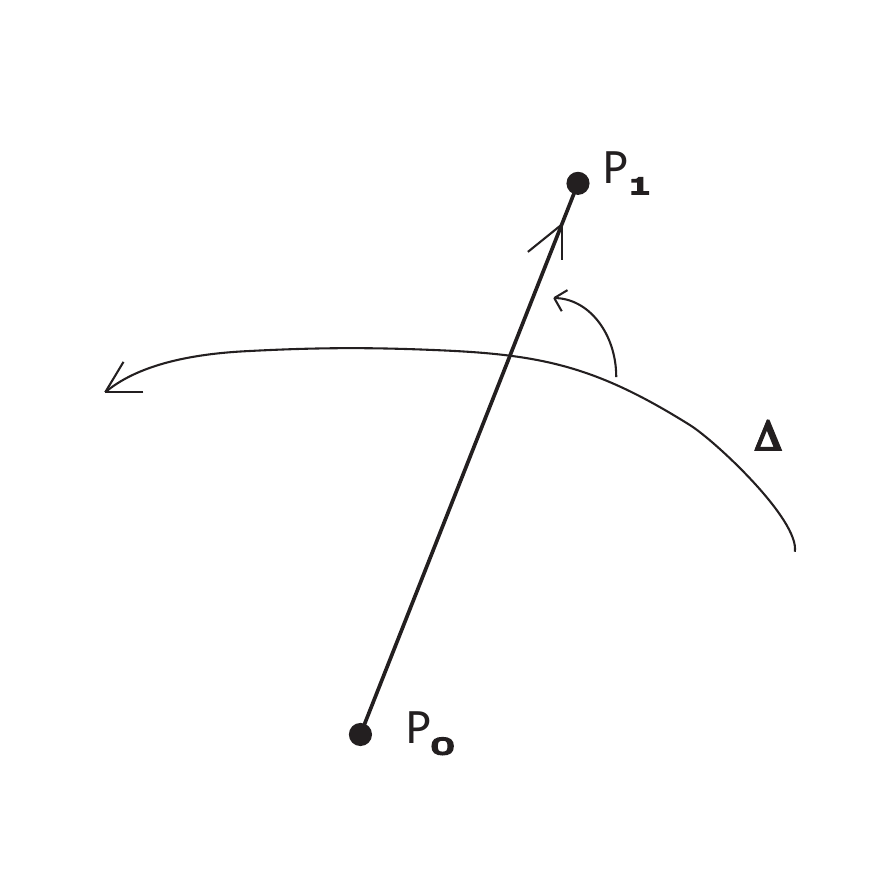}
\caption{According to the orientations, the fibre $F_1=\overline{f^{-1}(P_1)}$ is obtained via a Legendrian
surgery on the fibre $F_0=\overline{f^{-1}(P_0)}$.}\label{fig:crossing}
\end{figure}

\noindent In the contact case it implies that the crossing of a singular curve in
the fibration amounts to a directed Weinstein cobordism. In the quasi--contact
case no such orientation appears. For instance, the case in which the
quasi--contact distribution is a foliation -- in dimension $3$ this is a taut foliation -- becomes absolutely symmetric and there is no difference in crossing one way or the other.\\

\noindent {\bf Examples.} The following two constructions yield simple instances of contact pencils.
\begin{itemize}
\item[1.] Consider a closed symplectic manifold $(M, \omega)$ with $[\omega]$ of integral class and a symplectic Lefschetz pencil $(f,B,C)$ on $(M, \omega)$ as constructed in \cite{Do2}. Consider the circle bundle $\sS(L)$ associated to $\omega$ with its Boothby--Wang contact structure $(\sS(L),\xi_{\omega})$, defined in \cite{BW}, and the projection $\pi: \sS(L)\longrightarrow M$. Then the triple
$$(\pi^*f, \pi^{-1}(B), \pi^{-1}(C))$$
is, after a small perturbation of $\pi^*f$, a contact pencil for $(\sS(L),\xi_{\omega})$.\\

\item[2.] Given two generic complex polynomials in $\C^n$ of high enough degree, we can construct the associated complex pencil $(f,B,C)$. Suppose that the base points set $B$ contains the origin and denote the standard embedding of the radius $r$ sphere by $e_{r}: \sS^{2n-1}\longrightarrow \C^n$. Then for a generic radius $\rho>0$, the triple $(e_{\rho}^*f, e_{\rho}^{-1}(B), \mbox{Crit}(e_{\rho}^*(f)))$ is a contact pencil for $(\sS^{2n-1},\xi_{st})$.\\
\end{itemize}

\noindent Consider a quasi--contact structure $(M, \xi, \omega)$. The main existence result \cite{IbortDMT2, MT, Na} can be stated as

\begin{theorem} \label{thm:exist_pencil}
Let $(M, \xi, \omega)$ be a quasi--contact manifold with $[\omega]$ rational. Given an integral cohomology class $a\in
H^2(M, \Z)$, there exists a quasi--contact pencil $(f, B, C)$ such that the
fibres are Poincar\' e dual to the class $a+k[\omega]$, for any $k\in \N$ large enough. 
\end{theorem}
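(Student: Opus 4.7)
The plan is to adapt the approximately holomorphic technique of Donaldson \cite{Do,Do1} to the quasi--contact setting, essentially carrying out the construction of \cite{IbortDMT2,Fran1}. As a first step, I would perturb $(\xi,\omega)$ within its quasi--contact homotopy class using Lemma \ref{lem:almost-quasi} so that $[\omega]$ is rational; after rescaling the integer $k$ this reduces the general case to the integral one. Then I would fix a Hermitian line bundle $L_\omega\to M$ with $c_1(L_\omega)=[\omega]$ and connection whose curvature is $-2\pi i\omega$, an auxiliary Hermitian line bundle $E_a\to M$ with $c_1(E_a)=a$, and consider the sequence $E_k=E_a\otimes L_\omega^{\otimes k}$. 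A compatible almost complex structure $J$ on $\xi$ defines a partial $\bbd_\xi$--operator on sections of $E_k$; rescaling the metric by factor $k$ turns the ``approximately holomorphic'' regime into a quantitative one, in which Gaussian peak sections concentrated at a single point become the basic building blocks.

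The bulk of the argument consists in producing pairs of sections $(\tau_k^0,\tau_k^1)\in\Gamma(E_k)^{\oplus 2}$ that are approximately $J$--holomorphic along $\xi$, uniformly transverse to zero as a pair, and whose projectivization has uniformly transverse singularities along $\xi$. I would build them by superposing Gaussian peak sections centred at the points of a sufficiently dense net, as in Donaldson's original scheme, and then apply Donaldson's globalization lemma twice: first to ensure that $B_k=\{\tau_k^0=\tau_k^1=0\}$ is cut transversely in the $\xi$--directions, so that it is a smooth codimension--$4$ almost contact submanifold; and second to ensure that $f_k=[\tau_k^0:\tau_k^1]\colon M\setminus B_k\to\CP^1$ is a submersion except along a link $C_k\subset M$ where the derivative along $\xi$ drops rank by one. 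The uniform estimates control both $\bd f_k$ and $\bbd_\xi f_k$ on $\xi$ so that for large $k$ the holomorphic part dominates and the standard local linearisations become valid.

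Once transversality is secured, the local models in Definition \ref{def:almost_pencil} emerge from the approximately holomorphic Taylor expansion of $(\tau_k^0,\tau_k^1)$ in Darboux--type coordinates adapted to $\xi$. At a base point, the linear parts of $\tau_k^0,\tau_k^1$ along $\xi$ are transverse, and a small perturbation yields the elliptic model $(z_1,\ldots,z_n,t)\mapsto[z_1:z_2]$ required by condition (ii). At a critical point, the holomorphic part of $f_k$ along $\xi$ has a non--degenerate Hessian, producing the Morse--type normal form $z_1^2+\ldots+z_n^2$, while along the direction transverse to $\xi$ the surviving derivative furnishes a submersion $g\colon(\R,0)\to(\C,0)$; this is precisely why the critical locus is a link $C$ of smooth curves rather than isolated points, and why $f(C)$ is a collection of immersed curves with transverse self--intersections (condition (i)). The positivity of $\omega$ as a $(1,1)$--form on the regular fibres is preserved because the approximately holomorphic condition controls it, so condition (iv) holds. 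Finally, the Poincar\'e dual of a generic fibre is $c_1(E_k)=a+k[\omega]$.

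The principal obstacle is the globalization step: the operator $\bbd_\xi$ only sees the distribution $\xi$, so the quantitative transversality estimates along $\xi$ must be combined with control in the direction transverse to $\xi$ in order to draw global conclusions about $f_k$ on the full $5$--manifold. This is what forces the appearance of the curves in $C$ as genuine one--dimensional critical loci, and it is where the closedness of $\omega$ (the precise content of the quasi--contact hypothesis, as opposed to merely almost contact) is used essentially: it allows Darboux--type coordinates to be chosen in an approximately holomorphic fashion, so that the linearised transversality estimates can be upgraded into $C^1$--small perturbations of the sections that preserve both the non--degeneracy of $\omega|_\xi$ and the local normal forms above.
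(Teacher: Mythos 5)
The paper does not give its own proof of this theorem; it quotes it from \cite{IbortDMT2,Fran1}, which carry out precisely the approximately holomorphic construction you sketch. Your outline — reducing to integral $[\omega]$, forming $E_k=E_a\otimes L_\omega^{\otimes k}$, producing $\xi$--transverse pairs of sections via peak sections and Donaldson's globalization, and extracting the elliptic and Morse local models together with the observation that the critical locus is a one--dimensional link and that closedness of $\omega$ is what makes the curvature estimates work — is a faithful summary of the argument in those references.
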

\noindent The basic construction goes as follows. Consider a line bundle $V$ whose first Chern class equals $a$ and denote by $L$ a Hermitian line bundle over $M$ whose curvature is $-i \omega$. The pencil is constructed using a suitable approximately holomorphic section $\sigma_1^k\oplus \sigma_2^k: M\longrightarrow\C^2\otimes (L^{k} \otimes V)$, this requires $k\in \N$ to be large enough. The pencil map is $f_k=[\sigma_1^k: \sigma_2^k]: M \setminus B_k \longrightarrow \CP^1$ and the base locus is $B_k=\{p \in M:\sigma_1^k(p)=\sigma_2^k(p)=0\}$. A point $p\in M$ maps to $[\sigma_1^k(p):\sigma_2^k(p)]\in\CP^1$. This is well--defined if $p$ is not contained in the base locus $B_k$. The construction is detailed in ~\cite{Na}.\\

\noindent The proof of this result does not work in the almost contact setting. In order to construct the pencil, the approximately holomorphic techniques are essential and for them to work we need the closedness of the $2$--form $\omega$ (so as to be able to construct the line bundle $L$). In general, a quasi--contact pencil may have empty base locus. Nevertheless a pencil obtained through approximately holomorphic sections on a higher dimensional manifold does not.\\

\noindent The following lemma will be useful.
\begin{lemma} \label{lem:zero}
Let $(M,\xi,\omega)$ be an almost contact 5--manifold, $(f, B, C)$ an almost contact pencil adapted to it and obtained from a section $s_1\oplus s_2$ of the bundle $\C^2\otimes\det(\xi)$, and so the base locus is defined as $B=Z(s_1\oplus s_2)$ and the pencil map is $f:= [s_1:s_2]: M \setminus B \to \CP^1$. Then the Chern class of $\xi_F$ vanishes for any regular fibre $(F, \xi_F)$.
\end{lemma}
\begin{proof}
Let $F$ be a regular fibre of $f$, this fibre is defined as the zero set of the section $s_{\lambda}=\lambda_1 s_1 +\lambda_2 s_2$, for a fixed $[\lambda_1: \lambda_2]\in \CP^1$. This is a section of the bundle $\det(\xi)$. Along this fibre $F$, the distribution $\xi$ satisfies
\begin{equation*}
c_1(\xi)|_{F}=c_1(\xi_{F})+c_1(\nu_{F}). 
\end{equation*}
The statement follows from $c_1(\nu_{F})= c_1(\det \xi)|_{F}= c_1(\xi)|_{F}$ inserted in the previous equation.
\end{proof}

\noindent In case the form $\omega$ of the quasi--contact structure is exact -- then called an exact quasi-contact structure -- we obtain the following

\begin{corollary} \label{coro:starting_pencil}
Let $(M,\xi,\omega)$ be an exact quasi--contact closed manifold. Then it admits a
quasi--contact pencil such that any smooth fibre $F$ satisfies
$c_1(\xi_F)=0$. Further, the base locus $B$ is non--empty if $\dim M$
is greater than $3$.
\end{corollary}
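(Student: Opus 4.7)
The plan is to invoke Theorem~\ref{thm:exist_pencil} with the auxiliary cohomology class $a \in H^2(M, \Z)$ chosen to equal $c_1(\xi)$, and then to exploit the exactness $[\omega] = 0$ through an adjunction-type computation of $c_1(\xi_F)$ on each smooth fibre.

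First, apply Theorem~\ref{thm:exist_pencil} with $a := c_1(\xi)$ to obtain a quasi--contact pencil $(f, B, C)$ whose smooth fibres $F$ are Poincar\'e dual to $a + k[\omega] = c_1(\xi) + k[\omega]$ for some $k \in \N$. Since $\omega$ is exact, the term $k[\omega]$ vanishes in real cohomology, and one may arrange $[F] = c_1(\xi)$ in $H^2(M, \Z)$ after absorbing any torsion into the choice of $a$. The approximately holomorphic machinery nominally requires an integral reference class, which is provided by a $C^0$-small perturbation of $\omega$ to an integral closed form; this perturbation remains in the open cone of quasi--contact forms and does not affect the ensuing cohomological conclusion.

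Next, compute $c_1(\xi_F)$ on a smooth fibre $F$. Transversality of $F$ to $\xi$ together with the almost contact submanifold condition yields a symplectic splitting $\xi|_F = \xi_F \oplus N_F$, where $N_F$ is the normal bundle of $F$ in $M$, canonically identified with the $\omega$--orthogonal complement of $\xi_F$ inside $\xi|_F$. Since $F$ is cut out, away from the base locus, by a section of the Donaldson line bundle $L$ with $c_1(L) = [F]$, one has $N_F \cong L|_F$ and hence $c_1(N_F) = [F]|_F$. Additivity of $c_1$ on the symplectic splitting then gives
$$c_1(\xi_F) = c_1(\xi)|_F - c_1(N_F) = \bigl(c_1(\xi) - [F]\bigr)\big|_F = 0.$$

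For the non--emptiness of $B$ when $\dim M > 3$, recall that the base locus is the transverse common zero set of the two approximately holomorphic sections $s_0, s_1$ of $L$ used to build the pencil, hence has real codimension $4$; in dimension $\geq 5$ it is a non--empty submanifold of positive dimension by the positivity/ampleness built into the Donaldson--type construction, exactly as in \cite{IbortDMT2, Fran1}. The main obstacle is to ensure that the perturbation needed to run the integrally--formulated approximately holomorphic construction does not upset the computation of $c_1(\xi_F)$; this is harmless because the fibre class and the Chern class of the normal bundle depend only on cohomological data, which is controlled entirely by the chosen $a = c_1(\xi)$ and is insensitive to the small perturbation of $\omega$.
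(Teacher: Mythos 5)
Your argument is correct and follows the paper's proof essentially verbatim: apply Theorem~\ref{thm:exist_pencil} with $a=c_1(\xi)$, use exactness of $\omega$ (so $[\omega]=0$ is already integral --- no perturbation is actually needed) to get $[F]=c_1(\xi)$, and compute $c_1(\xi_F)=c_1(\xi)|_F-c_1(\nu(F))=(c_1(\xi)-[F])|_F=0$ from the transverse splitting $\xi|_F=\xi_F\oplus\nu(F)$. The one place you are vaguer than the paper is the non--emptiness of $B$: the precise statement invoked is the Lefschetz hyperplane theorem of \cite{IbortDMT2,IMP}, which gives surjectivity of $H_0(B)\to H_0(M)$ when $\dim M>3$; note that a purely cohomological ``positivity'' argument would not suffice here, since with $\omega$ exact the class Poincar\'e dual to $B$ may well vanish.
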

\begin{proof}
We use Theorem \ref{thm:exist_pencil} to construct a pencil such that the cohomology class $a\in H^2(M,\Z)$ is fixed to be $a = c_1(\xi) = c_1(\det \xi)$. Since $\omega$ is exact, thus $L\cong\C$, we obtain that the section defining the pencil $s_1\oplus s_2$ is a section of the bundle $\C^2 \otimes(\det \xi \otimes L^k)= \C^2 \otimes\det(\xi)$. Lemma \ref{lem:zero} implies that the almost contact structure induced in the regular fibres of the pencil has vanishing first Chern class.\\

\noindent Let us prove the non--emptiness of the set $B$. It is explained in \cite{IbortDMT2,IMP} that the submanifold $B=Z(\sigma_1^k\oplus \sigma_2^k)$ satisfies a Lefschetz hyperplane theorem (this follows from the fact that it is asymptotically holomorphic). It implies that whenever the dimension
of $M$ is greater than $3$, the morphism
$$ H_0(B) \longrightarrow H_0(M)$$ is surjective. Hence we conclude that $B$ is not the empty set.
\end{proof}

\noindent The triviality of the Chern class of the quasi--contact structures on the fibres and the non--emptiness of $B$ are used in the construction of the contact structure.

\section{Base locus and Critical loops.} \label{sec:defor_local}

\noindent Let $(M,\xi,\omega)$ be an exact quasi--contact $5$--manifold and $(f,B,C)$ a quasi--contact pencil on it. Assume that $B\neq\emptyset$ and $c_1(\xi_F)=0$ for a regular fibre $F$ of $f$. Such a pencil is provided in Corollary \ref{coro:starting_pencil}. A fair amount of control on the almost--contact structure can be achieved in the neighborhood of the base locus and the critical loops.

\begin{definition}
A submanifold $i:S \longrightarrow M$
of an almost contact manifold $(M, \xi, \omega)$ is said to be contact if it is an almost contact submanifold and there is a choice of adapted form $\alpha$ for $\xi$ in a neighborhood $U$ of $S$, i.e. $\xi_{|U} = \ker \alpha$, such that $(d\alpha)_{|U}= \omega_{|U}$.
\end{definition}

\noindent An additional property in our almost contact pencil can then be required.

\begin{definition}
An almost contact pencil $(f,B,C)$ on $(M,\xi, \omega)$ is called good if $B\neq\emptyset$, any smooth fibre $F$ satisfies $c_1(\xi_F)=0$ and $B$ and $C$ are contact submanifolds of $(M,\xi,\omega)$.
\end{definition}

\noindent The following lemma provides a perturbation achieving a suitable almost contact pencil.

\begin{lemma} \label{coro:perturb_pencil}
Let $(M,\xi,\omega)$ be a quasi--contact closed $5$--dimensional manifold and let $(f,B,C)$ be a quasi--contact pencil. There
exists a $C^0$--small perturbation $\{(\xi_t, \omega)\}$ of almost contact
structures such that:
\begin{enumerate}
\item $(\xi_t,\omega)$ is an almost contact structure $\forall t\in[0,1]$, and $(\xi_0,\omega)=(\xi,\omega)$.
\item $B$ and $C$ are contact submanifolds of $(\xi_1,\omega)$.
\item $(f,B,C)$ is an almost contact pencil for $(M,\xi_1,\omega)$.
\item $c_1((\xi_1)_{|F})=0$ for any regular fibre $F$ of $f$.
\end{enumerate}
\end{lemma}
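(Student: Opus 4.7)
\noindent The plan is to exploit that $N := B \cup C$ is a disjoint union of loops transverse to $\xi$, and to modify $\xi$ only inside a tubular neighborhood $U$ of $N$. Each component of $U$ is diffeomorphic to $\sS^1 \times \D^4$, so $H^2(U;\R) = 0$ and $\omega|_U$ admits a primitive $\beta$. I will construct a particular primitive $\alpha$ of $\omega|_U$ whose kernel coincides with $\xi$ at every point of $N$; the distribution $\xi_1 := \ker \alpha$ then automatically makes a small neighborhood of $N$ into a contact submanifold of $(M, \xi_1, \omega)$, since $d\alpha = \omega$ is non--degenerate on $\ker \alpha$.

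To construct such an $\alpha$, I would work on a single cylindrical component with coordinates $(t, z^1, \ldots, z^4)$ and $N = \{z = 0\}$. Every primitive of $\omega$ on $U$ takes the form $\alpha = \beta + c\, dt + df$ for a constant $c \in \R$ and a function $f$, since closed $1$--forms on $\sS^1 \times \D^4$ are generated, modulo exact forms, by $dt$. Fix a defining $1$--form $\alpha_\xi$ of $\xi$ on $U$, normalized so that $\alpha_\xi(\partial_t) = 1$ along $N$. Writing $\beta = B_0\, dt + \sum B_i\, dz^i$ and $\alpha_\xi = dt + \sum A_i\, dz^i$, the pointwise requirement $\alpha_p = h(p)\, \alpha_\xi|_p$ for $p \in N$ becomes
$$
h(t) = B_0(t,0) + c + \partial_t f(t,0), \qquad \partial_i f(t,0) = h(t)\, A_i(t,0) - B_i(t,0).
$$
Setting $\partial_t f|_N \equiv 0$ and choosing $c$ large enough that $h := B_0 + c > 0$ determines the normal derivatives $\partial_i f|_N$ as explicit functions $g_i$ of $t$, which are realized by $f(t,z) = \rho(|z|) \sum_i z^i g_i(t)$ for a cutoff $\rho$ with $\rho(0) = 1$.

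With $\alpha$ constructed, I would define $\xi_1 := \ker \alpha$ on a small neighborhood $V \subset U$. Since $\alpha$ and $\alpha_\xi$ are pointwise proportional along $N$, $\xi_1$ coincides with $\xi$ on $N$ and is $C^0$--close to $\xi$ on $V$. Globally, I would patch $\xi_1$ to $\xi$ outside $V$ by interpolating defining $1$--forms through a collar, producing the homotopy $\{\xi_t\}$ from $\xi_0 = \xi$ to $\xi_1$ with each $\xi_t$ a $C^0$--small perturbation of $\xi$ supported in $U$. Conditions (1) and (2) then hold by construction. Condition (3) is preserved because the pencil's compatible charts impose pointwise conditions on $\xi$ only at points of $N$, where $\xi_1 = \xi$, while the transversality of $\xi_1$ to the regular fibres of $f$ is an open condition surviving $C^0$--small deformations. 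Condition (4) follows from the homotopy invariance of $c_1$ for symplectic vector bundles.

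The crucial step is the pointwise matching of $\alpha$ with $\xi$ along the loops of $N$. The space of primitives of $\omega|_U$ is parametrized by $H^1(U;\R) \cong \R$ modulo exact forms, and this $\R$--parameter $c \cdot dt$ is essential: it provides the freedom needed to make the conformal factor $h$ positive, ensuring that $\xi_1 = \ker \alpha$ is a well--defined $4$--plane field continuously close to $\xi$.
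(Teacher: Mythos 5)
Your argument is correct and follows the same route the paper sketches: the paper notes that near the link $B\cup C$ the difference $\omega-d\alpha$ is exact and one perturbs the defining form to achieve $\omega=d\alpha'$, and your construction of the primitive $\beta+c\,dt+df$ with the gauge freedom $c\,dt+df$ chosen so that $\ker\alpha$ matches $\xi$ along $N$ is precisely this, spelled out in coordinates. The patching, $C^0$-smallness, and homotopy-invariance observations for conditions (1)--(4) are also what the paper's ``exercise'' remark has in mind.
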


\noindent Fix an associated contact form $\alpha$, i.e. $\xi=\ker\alpha$. The proof of the lemma is an exercise. Indeed, in a neighborhood of the link $B\cup C$ the difference between $\omega$ and $d\alpha$ is exact and its primitive (which can be chosen to vanish along the link) allows us to perturb the defining form until we achieve the contact condition $\omega=d\alpha_1$, $\xi_1=\ker\alpha_1$.\\

\noindent Both Corollary \ref{coro:starting_pencil} and Lemma \ref{coro:perturb_pencil} imply the following

\begin{proposition}\label{prop:good_ac}
Let $(M,\xi,\omega)$ be an exact quasi--contact closed $5$--dimensional manifold. Then there exists an almost contact perturbation $(\xi',\omega)$ of $(\xi,\omega)$ such that $(M,\xi',\omega)$ admits a good almost contact pencil $(f,B,C)$.
\end{proposition}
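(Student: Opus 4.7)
The plan is to chain the two preceding ingredients. First I would apply Corollary \ref{coro:starting_pencil} to the exact quasi--contact manifold $(M,\xi,\omega)$: since $\dim M = 5 > 3$, this produces a quasi--contact pencil $(f,B,C)$ with non--empty base locus $B$ and with $c_1(\xi_F) = 0$ on every smooth fibre $F$. No perturbation has been performed yet, so $(f,B,C)$ is quasi--contact but not yet good.

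Next I would feed $(f,B,C)$ into Lemma \ref{coro:perturb_pencil} to obtain a $C^0$--small path $\{(\xi_t,\omega)\}_{t\in[0,1]}$ of almost contact structures with $(\xi_0,\omega)=(\xi,\omega)$, and set $\xi' := \xi_1$. Conditions (2)--(4) of the lemma supply, respectively, contact neighborhoods of $B$ and $C$, persistence of $(f,B,C)$ as an almost contact pencil for $\xi'$, and $c_1(\xi'_F) = 0$. Since the base locus is the same set $B$, its non--emptiness is automatic, and the conclusion of the proposition follows.

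The substantive step hides in Lemma \ref{coro:perturb_pencil}, which the authors label as an exercise. The idea I would follow: fix a defining form $\alpha$ with $\xi = \ker\alpha$ and work on a tubular neighborhood $U$ of $B \cup C$. Because $B$ is codimension--$4$ (hence a disjoint union of circles in $M^5$) and $C$ is a transverse link, $U$ deformation retracts onto a $1$--complex and $H^2(U,\R) = 0$, so the closed form $\omega - d\alpha$ is exact on $U$, say $\omega - d\alpha = d\beta$. Cutting $\beta$ off against a bump function $\chi$ supported in $U$ and equal to $1$ on a smaller nested neighborhood $V\subset U$ yields $\alpha_1 := \alpha + \chi\beta$ with $d\alpha_1 = \omega$ on $V$. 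Controlling the $C^0$--size by shrinking the tube ensures that the interpolating family $\alpha_t := \alpha + t\chi\beta$ defines almost contact distributions $\xi_t := \ker\alpha_t$ that stay transverse to the fibres of $f$ and compatible with $\omega$, proving conditions (1)--(3). Condition (4) is preserved because the Chern class is a homotopy invariant of the complex bundle $\xi_t|_F$, and transversality with $F$ is preserved throughout the homotopy.
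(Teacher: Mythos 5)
Your proposal is correct and follows essentially the same route as the paper: chain Corollary \ref{coro:starting_pencil} (giving a quasi--contact pencil with $B\neq\emptyset$ and $c_1(\xi_F)=0$) with Lemma \ref{coro:perturb_pencil} (the $C^0$--small perturbation making neighborhoods of $B$ and $C$ contact). Your sketch of the perturbation lemma also matches the paper's hint verbatim -- exactness of $\omega-d\alpha$ near the link $B\cup C$ and a cutoff of the primitive -- so there is nothing to criticize.
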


\section{Surgery and good ace fibrations}  \label{sec:blowup}

Let $(f,B,C)$ be a good almost contact pencil on $(M,\xi,\omega)$. The map $f$ does not define a smooth fibration on $M$ for two reasons: it is not defined on $B$ and there exist critical fibres. The former failure can be avoided if we change the domain manifold $M$, i.e. $f$ can be defined on a suitable closed manifold $\widetilde{M}$ obtained from $M$ by a specific surgery procedure. Let us introduce three pieces of terminology.

\begin{definition}
An almost contact Lefschetz fibration is an almost contact pencil $(f,B,C)$ with $B=\emptyset$. A contact Lefschetz fibration is a contact pencil $(f,B,C)$ with $B=\emptyset$.
\end{definition}

\begin{definition}
An almost contact exceptional fibration on $(M,\xi,\omega)$ is a triple $(f,C,E)$ where $(f,C)$ is an almost contact Lefschetz fibration and $E$ a non--empty collection of embedded 3--spheres with trivial normal bundle such that $f$ restricts to the Hopf fibration on any of them.
\end{definition}
\noindent An almost contact exceptional fibration will be shortened to an ace fibration.
\begin{definition}
An ace fibration is said to be good if the curves $C$ and the spheres in $E$ are contact submanifolds of $(M,\xi,\omega)$, the contact structure in any $3$--sphere of $E$ is the standard tight contact structure and any smooth fibre $F$ of $f$ satisfies $c_1(\xi_F)=0$.
\end{definition}

\noindent An almost contact Lefschetz fibration can be obtained out of an almost contact Lefschetz pencil by performing a surgery along the base locus. In particular, each connected component of the link $B$ is replaced by a standard $3$--sphere $(\sS^3,\xi_{std})$. The aim of this Section is to produce a good ace fibration from a good almost contact pencil on a $5$--dimensional manifold.

\begin{theorem}\label{thm:good_blowup}
Let $(M,\xi,\omega)$ be an almost contact 5--manifold and $(f,B,C)$ a good almost contact pencil. There exist a homotopic deformation $(\xi_1,\omega_1)$ of $(\xi,\omega)$, an almost contact manifold $(\widetilde{M},\widetilde{\xi},\widetilde{\omega})$ with a good ace fibration $(\widetilde{f},E,\widetilde{C})$, a closed neighborhood $\SN(B)$ of $B$ and a diffeomorphism $\Pi:\widetilde{M}\setminus E\longrightarrow M\setminus\SN(B)$ such that
\begin{itemize}
\item[-] The almost contact structure $(\xi_1,\omega_1)$ is contact on a neighborhood of $\SN(B)$.
\item[-] $(\Pi_*\widetilde\xi,\Pi_*\widetilde\omega)=(\xi_1,\omega_1)$ on $M\setminus\SN(B)$.
\end{itemize}
\end{theorem}

\noindent Note that in the context of this article, we are implicitly assuming that the map $f$ has been constructed using asymptotically holomorphic techniques and thus the map $f$ is defined using a section of the bundle $\C^2\otimes\det(\xi)$ (we refer the reader to the paragraph following Theorem \ref{thm:exist_pencil}). The description of the almost contact manifold $(\widetilde{M},\widetilde{\xi},\widetilde{\omega})$ is explicit from the data $(M,\xi,\omega)$. The good ace fibration $(\widetilde{f},E,\widetilde{C})$ is also constructed directly from $(f,B,C)$. This procedure we use is a particular case of a blow--up operation. The analogy with the blow--up of a base point for a symplectic Lefschetz pencil on a 4--manifold can be useful for the reader. See \cite{CPP}.\\

\noindent The description of $(\widetilde M,\widetilde\xi,\widetilde\omega)$ is given in Section \ref{ssec:surgery}. The compatibility of $(\widetilde{M},\widetilde{\xi},\widetilde{\omega})$ with the fibration $(\widetilde{f},C)$ is detailed in Subsection \ref{ssec:comp}. In Subsection \ref{ssec:gace}, we describe a method that ensures that the regular fibres of the new fibration $\tilde{f}$ have vanishing Chern class.

\subsection{Surgery.} \label{ssec:surgery} The almost contact manifold $(\widetilde{M},\widetilde{\xi},\widetilde{\omega})$ is obtained from $(M,\xi,\omega)$ via a surgery procedure. The only topological requirement to perform surgery along a sphere is the triviality of its normal bundle. In contact topology, a standard contact neighborhood also appears in the description. In particular there exists a restriction on the radius in the local model. See \cite{NP}. This is not an issue in the almost contact case: the size of a neighborhood of a contact submanifold of an almost contact manifold can be enlarged by a homotopy of the distribution. In precise terms:

\begin{lemma} \label{lemma:expand}
Let $(M, \xi, \omega)$ be an almost contact manifold and $(S,\xi=\ker\alpha)$ be a contact submanifold with trivial normal bundle $\nu_S\cong S\times\R^{2q}$. Fix a radius $R\in\R$. Then there exists an almost contact homotopy $(M,\xi_t,\omega_t)$ such that $(M,\xi_0,\omega_0)=(M,\xi,\omega)$ and it conforms the following conditions:
\begin{itemize}
\item[-] The homotopy is supported in an annulus around $S$, i.e. given a smooth fiberwise metric on $\nu_S$ there exist $\rho_1,\rho_2\in\R^+$ with $\rho_1<\rho_2$ such that
$$\xi_t|_{\mathbb{D}(\nu_S,\rho_1)}=\xi|_{\mathbb{D}(\nu_S,\rho_1)},\quad 
\xi_t|_{M\setminus \mathbb{D}(\nu_S,\rho_2)}=\xi|_{M\setminus\mathbb{D}(\nu_S,\rho_2)},
$$
where $\mathbb{D}(\nu_S,r)$ is the disk bundle of radius $r$. The almost contact homotopy can be chosen such that $\rho_1,\rho_2$ are arbitrarily small.\\

\item[-] There exist a neighborhood $U$ of $S$ and a diffeomorphism $\varphi$ such that
$$\varphi:S \times B^{2q}(R) \longrightarrow U,\quad\varphi^*\xi_1=\ker(\alpha-r^2\alpha_{std}),\quad\varphi^* \omega_1 =d\alpha-2rdr\wedge d\alpha_{std},$$
where the $1$--form $\alpha_{std}$ is the standard contact form on $\partial B^{2q}(R)$.
\end{itemize}
\end{lemma}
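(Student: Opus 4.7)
My plan is to use the standard contact neighborhood theorem to install the model on a small contact tubular neighborhood of $S$, and then exploit the openness of the almost contact condition to deform the distribution, extending the model to the arbitrary prescribed radius $R$. The key point is that the contact category is rigid (the size of the standard model is fixed by the geometry, as seen in the proof of Theorem \ref{thm:blow-up}), while the almost contact category admits the necessary rescaling through deformations that are not contactomorphisms.

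First, I apply the standard contact neighborhood theorem for a codimension-$2q$ contact submanifold with trivial normal bundle. This yields $\delta > 0$ and a contactomorphism
$$\varphi_0 : S \times B^{2q}(\delta) \longrightarrow U_0 \subset (M, \xi), \qquad \varphi_0^* \xi = \ker(\alpha - r^2 \alpha_{std}).$$
I then extend $\varphi_0$ to a diffeomorphism $\varphi : S \times B^{2q}(R) \to U$ onto a larger tubular neighborhood $U$ of $S$, arranged to agree with $\varphi_0$ on $S \times B^{2q}(\delta')$ for some $\delta' < \delta$. Such an extension exists by the triviality of $\nu_S$ together with standard tubular neighborhood constructions; outside $S \times B^{2q}(\delta')$ the map $\varphi$ may be chosen freely. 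The pullback $\varphi^*\xi$ therefore agrees with the model $\ker(\alpha - r^2\alpha_{std})$ on $S \times B^{2q}(\delta')$ and takes some other form on the complement.

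Next, I construct the homotopy of almost contact structures on $S \times B^{2q}(R)$. Choose a path of nowhere-vanishing $1$-forms $\gamma_t$ connecting a defining form for $\varphi^*\xi$ (at $t=0$) to $\alpha - r^2\alpha_{std}$ (at $t=1$), arranged so that $\gamma_t \equiv \alpha - r^2\alpha_{std}$ on $S \times B^{2q}(\delta')$ throughout the homotopy (where the two 1-forms already coincide). Such a path exists because the nowhere-vanishing 1-forms with prescribed values on the closed set $S \times B^{2q}(\delta')$ form a nonempty open subset of an affine space, and any two nonvanishing forms can be connected by a path staying nonvanishing for dimensional reasons. Set $\tilde\xi_t := \ker\gamma_t$; this is a smooth family of codimension-$1$ distributions agreeing with the model on the inner region. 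Simultaneously deform the 2-form through $\tilde\omega_t$ non-degenerate on $\tilde\xi_t$, interpolating between $\varphi^*\omega$ and the prescribed model form, using the contractibility of the fiberwise cone of 2-forms making a given codimension-$1$ distribution symplectic, in the spirit of Lemma \ref{lem:almost-quasi}. Pushing forward via $\varphi$ and extending by $(\xi, \omega)$ outside $U$ yields the desired almost contact homotopy on $M$; the annulus on which $\xi_t\equiv\xi$ lies inside $\varphi(S\times B^{2q}(\delta'))$, where the homotopy is identically trivial.

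The main obstacle is maintaining compatibility of the 2-form with the distribution throughout the homotopy, together with the nowhere-vanishing condition on $\gamma_t$ needed to produce a smooth family of codimension-$1$ kernels. Since the almost contact condition is open in the space of pairs (distribution, 2-form) and the space of compatible 2-forms on a fixed codim-$1$ distribution is a fiberwise contractible open convex cone, the simultaneous deformation exists by the standard contractibility argument used in Lemma \ref{lem:almost-quasi}, provided the interpolation $\gamma_t$ is chosen sufficiently generically (or, if needed, via a two-step convex-combination construction through a nearby intermediate 1-form).
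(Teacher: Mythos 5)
You have correctly identified the tension the lemma resolves (a contact tubular neighborhood has a fixed size, while an almost contact one can be rescaled by a non-contact deformation), but your construction has a genuine gap at the outer boundary. Your homotopy $\gamma_t$ on $S\times B^{2q}(R)$ is arranged to be constant only on the inner region $S\times B^{2q}(\delta')$; near the outer boundary $S\times\partial B^{2q}(R)$ the endpoint $\gamma_1=\alpha-r^2\alpha_{std}$ will in general differ from $\gamma_0$, a defining form of $\varphi^*\xi$. Consequently $\varphi_*\ker\gamma_t$ does not agree with $\xi$ near $\partial U$, and ``extending by $(\xi,\omega)$ outside $U$'' does not even yield a distribution on $M$, let alone a homotopy supported in an annulus. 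Your deformation-retract argument does show that $\gamma_t$ exists \emph{rel the inner ball}, but if you additionally demand that $\gamma_t$ be constant near $S\times\partial B^{2q}(R)$ --- which is what gluing requires --- the relevant pair is no longer a deformation retract and nontrivial obstruction classes in $H^{k}\bigl(X,A;\pi_k(S^{2n})\bigr)$ appear; ``for dimensional reasons'' does not dispose of them. Two smaller points: the lemma keeps $\omega$ fixed throughout the homotopy (the triple is $(M,\xi_t,\omega)$), so you should not deform it, only verify it stays nondegenerate on the moving kernel; and the neighborhood $U$ in the conclusion is not a geometrically large tube in $M$, which is a sign that extending $\varphi_0$ to a big tubular neighborhood is the wrong move.

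The paper's proof never leaves the original small contact neighborhood. On $S\times B^{2q}(\varepsilon)$, where $\xi=\ker(\alpha-r^2\alpha_{std})$, it deforms only the radial profile: $\eta_t=\alpha-H_t(r)\alpha_{std}$ with $H_0(r)=r^2$, with $H_t(r)=r^2$ for $r\in[0,\varepsilon/4]\cup[3\varepsilon/4,\varepsilon]$ (so the homotopy is supported in an interior annulus and glues trivially to $\xi$ on the rest of $M$, with no boundary-matching problem), and with $H_1(\varepsilon/2)=R^2$, $H_1'>0$ on $(0,\varepsilon/2)$. The price is that $H_1$ fails to be monotone on $[\varepsilon/2,3\varepsilon/4]$, so $\eta_1$ is not contact there --- but it is still almost contact, which is all that is claimed. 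The radius-$R$ model is then obtained not by enlarging the domain but by the reparametrization $(s,r,\theta)\mapsto(s,\sqrt{H_1(r)},\theta)$, which identifies the sub-ball $\{r\le\varepsilon/2\}$ carrying $\ker\eta_1$ with the standard model on $S\times B^{2q}(R)$. This radial rescaling inside the fixed chart is the idea your proposal is missing.
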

\begin{proof}
This is a statement about a neighborhood $S\times B^{2q}(\varepsilon)$. Suppose that $R>\varepsilon$. In $S\times B^{2n}(\varepsilon)$ the almost contact distribution $(\xi,\omega)$ is a contact structure described as the kernel of the $1$--form $\eta_0= \alpha-r^2 \alpha_{std}$. Consider a function $H\in C^\infty([0,\varepsilon],\R^+)$ such that:
\begin{itemize}
\item[a.] $H(r)=r^2$ for $r\in [0,\varepsilon/4] \cup [3\varepsilon/4,\varepsilon]$,
\item[b.] $H'(r)>0$ for $r \in(0,\varepsilon/2)$,
\item[c.] $H(\varepsilon/2)= R^2$.
\end{itemize}
Consider the two values $\rho_1= \varepsilon/4$ and $\rho_2= \varepsilon$. There exists a homotopy $\{H_t\}$ of functions in $C^\infty([0,\varepsilon],\R^+)$ with $H_0(r)=r^2$, $H_1(r)=H(r)$ and any $H_t$ satisfying properties a and b above. The homotopy of $1$--forms $\eta_t= \alpha-H_t(r)\alpha_{std}$ defines a homotopy of almost contact distributions. The distributions are $\xi_t=\ker\eta_t$. The symplectic structures are of the form $\omega_t=d\alpha-H_t d\alpha_{std}- \SH_t(r)dr\wedge\alpha_{std}$ where $\SH_t(r)$ is a positive smooth function coinciding with $\partial_rH_t$ in $r\in[0,\varepsilon/2)\cup(3\varepsilon/4,\varepsilon]$.
The diffeomorphism
\begin{eqnarray*}
\Psi: S \times B^{2q}(R) &\longrightarrow & S \times B^{2q}(\varepsilon/2) \\
(s,r, \theta) & \longmapsto & (s, \sqrt{H(r)}, \theta)
\end{eqnarray*}
satisfies $\Psi^* \eta_0 = \eta_1$ and the statement of the Lemma follows.
\end{proof}

\noindent The Lemma does not hold for a contact structure since the contact condition is violated at the region $(\varepsilon/2,3\varepsilon/4)$ in the course of the homotopy.\\

\noindent Theorem \ref{thm:good_blowup} concerns both the construction of an almost contact manifold and a good ace fibration. The description of the former naturally leads to that of the latter. Let us then begin with the almost contact manifold. Both the statement and the proof of the following result are relevant. Subsections \ref{ssec:comp} and \ref{ssec:gace} refer to the proof and notation therein.

\begin{theorem} \label{thm:blow-up}
Let $(M^{2n+1}, \xi, \omega)$ be an almost contact manifold and $S\subset M$ a smooth transverse loop. Suppose that $(\xi,\omega)$ is a contact structure on a neighborhood of $S$. There exist a homotopic deformation $(\xi_1,\omega_1)$ of $(\xi,\omega)$, a manifold $\widetilde{M}$, a codimension--$2$ submanifold $E\subset\widetilde M$, a neighborhood $\SN(S)$ of $S$ and a diffeomorphism $\Pi:\widetilde{M}\setminus E\longrightarrow M\setminus\SN(S)$ conforming the following conditions:
\begin{itemize}
\item[-] There exists an almost contact structure $(\widetilde{\xi},\widetilde{\omega})$ on $\widetilde{M}$.
\item[-] The codimension--$2$ submanifold $E$ is a contact submanifold of $(\widetilde{M},\widetilde{\xi},\widetilde{\omega})$ contactomorphic to the standard contact sphere $(\sS^{2n-1},\xi_{st})$.
\item[-] $(\Pi_*\widetilde\xi,\Pi_*\widetilde\omega)=(\xi_1,\omega_1)$ on $M\setminus\SN(S)$.
\end{itemize}
The submanifold $E$ is called the exceptional divisor.
\end{theorem}

\begin{proof} This proof depends on a fixed integer $k\in\Z$. This parameter becomes relevant in the description of the good ace fibration $(\widetilde{f},E,\widetilde{C})$. It can be chosen quite arbitrarily in this argument, but there shall be a specific choice in the proof of Theorem \ref{thm:good_blowup}.\\

\noindent Consider the standard contact form  $\alpha_{std}$ on $\sS^{2n-1}$, induced by the restriction of the standard Liouville form on $\R^{2n}$, and the contact structure $\xi_{std}= \ker \{ d\theta - \rho^2 \alpha_{std} \}$ on $\sS^1\times B^{2n}$ endowed with polar coordinates $(\theta;\rho,\sigma)$. The contact neighborhood theorem for the transverse loop $S$ provides an open neighborhood $U$ of $S$, a constant $\rho_0\in\R^+$ and a diffeomorphism
\begin{eqnarray*}
\phi: S \times B^{2n}(\rho_0) & \longrightarrow & U \\
(\theta, \rho, \sigma) & \longmapsto & \phi(\theta, \rho, \sigma)
\end{eqnarray*}
such that $\phi^*(\xi_{|_U}) =  \xi_{std}$. If $k$ is a positive integer, suppose that the radius $\rho_0$ is small enough so that $k\rho_0^2<1$. This condition is necessarily satisfied for $k<0$. Consider the positive number $\rho_k\in \R^+$ satisfying $\rho_0= \frac{\rho_k}{\sqrt{1+ k \rho_k^2}}$ and the diffeomorphism
\begin{eqnarray*}
\psi_k: \sS^1 \times B^{2n}(\rho_k) & \longrightarrow & \sS^1 \times B^{2n}\left(\rho_0\right) \\
(\theta, \rho, w_1, \ldots, w_n) & \longmapsto & \left(\theta,  \frac{\rho}{\sqrt{1+k\rho^2}}, e^{ik\theta}w_1, \ldots, e^{ik\theta}w_n\right).
\end{eqnarray*}
The map $\psi_k$ preserves the distribution $\xi_{std}$. In case it is needed, apply the Lemma \ref{lemma:expand} to enlarge the neighborhood $\sS^1 \times B^{2n}(\rho_k)$ of $S$ to radius $R=2$. This yields a deformation $\xi_1$ of the contact structure $\xi_{std}$ supported in an annulus of radii $0 < \rho_a < \rho_b <\rho_k$ and a compatible embedding $\varphi: \sS^1 \times B^{2n}(2) \longrightarrow \sS^1 \times B^{2n}(\rho_b)$. The deformation is relative to the boundary and thus the distribution $(\phi\circ\psi_k\circ\varphi)_*(\xi_1)$ defined over $U$ admits an extension $\xi_1$ over $M$ using the original distribution $\xi$. There is also a corresponding extension for the symplectic structure $\omega_1$. To ease notation, we still refer to $(\xi_1,\omega_1)$ as $(\xi,\omega)$. In these terms, Lemma \ref{lemma:expand} provides a neighborhood $U'$ of $S$ in $M$ and a diffeomorphism
$$\Phi: \sS^1 \times B^{2n}(2) \longrightarrow U',\quad (\theta,r,\sigma)\longmapsto\Phi(\theta,r,\sigma)= \phi \circ \psi_k \circ \varphi,\quad \Phi^*(\xi|_S)=\ker(d\theta -r^2 \alpha_{std}).$$

\noindent Consider the diffeomorphism
\begin{eqnarray*}
\phi_1: \sS^1\times (3/2,2) \times \sS^{2n-1} & \longrightarrow & \sS^1\times (3/2,2) \times \sS^{2n-1} \\
(\theta, r, w_1, \ldots, w_n) & \longrightarrow & (\theta, r, e^{i\theta}w_1, \ldots, e^{i\theta}w_n).
\end{eqnarray*}

\noindent If $V= \Phi(\sS^1 \times B^{2n}(3/2))$, then $g= \Phi \circ \phi_1: \sS^1\times (3/2,2) \times \sS^{2n-1} \longrightarrow U\setminus V \subset M$ satisfies
$$g^* \xi = \ker \left\{-\left(\alpha_{std} + \frac{r^2-1}{r^2}d\theta \right)\right\}.$$

\noindent Note that the function
\begin{eqnarray*}
h: (3/2,2) & \longrightarrow & \R \\
r & \longmapsto & h(r)= \frac{r^2-1}{r^2}
\end{eqnarray*}
satisfies $h(r)>5/9$. Therefore it is possible to extend it to a smooth function $\widetilde{h}:[0,2) \longrightarrow \R$ satisfying the following conditions (See Figure \ref{fig:h}):
\begin{itemize}
\item[-] $\widetilde{h}(r)=r^2$, for $r\in[0,1/2]$,
\item[-] $\widetilde{h}(r)=h(r)$, for $r>3/2$,
\item[-] $\widetilde{h}(r)'>0$ for $r\in[1/2,3/2]$.
\end{itemize}
\begin{figure}[ht]
\includegraphics[scale=0.4]{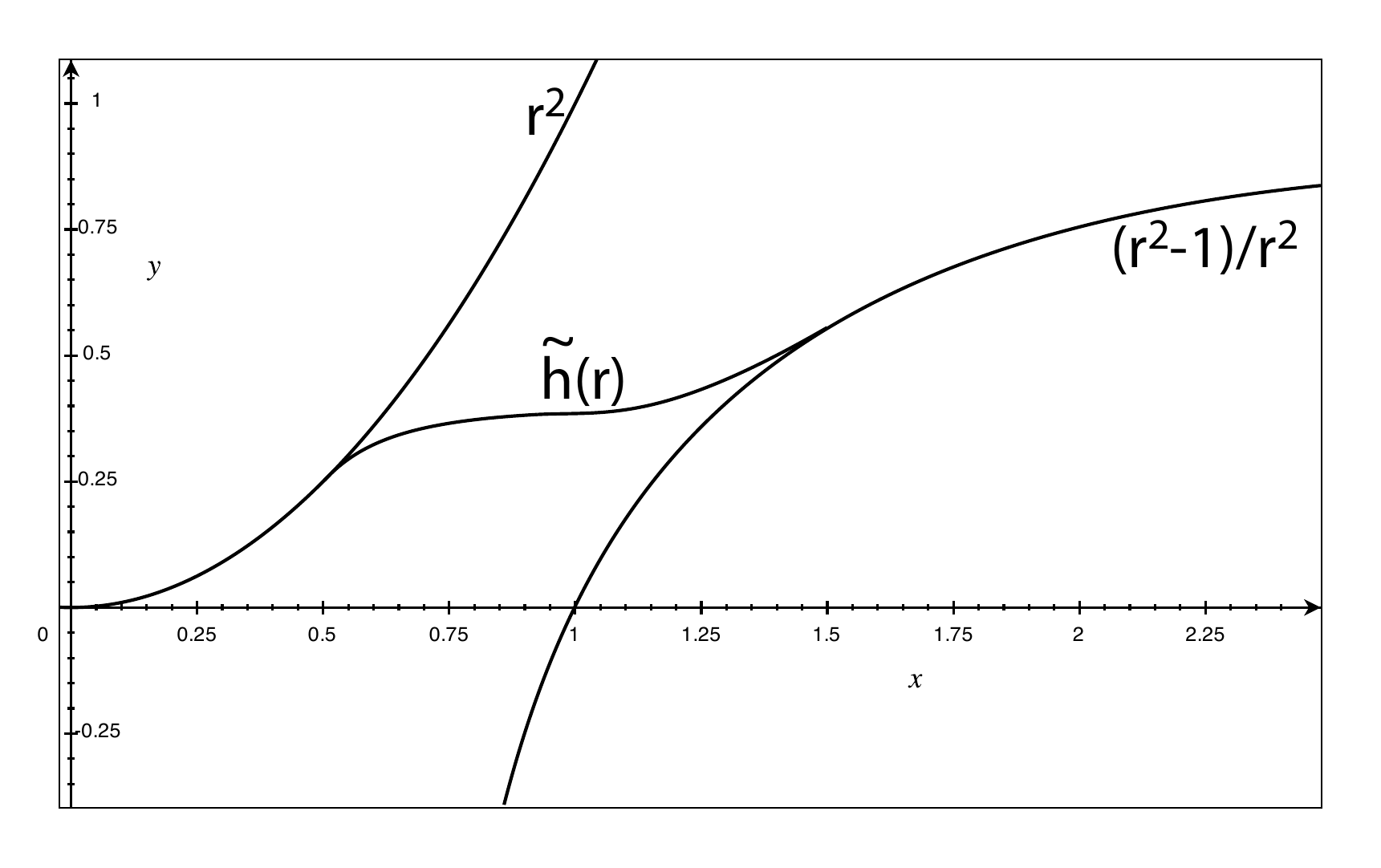}
\caption{The function $\widetilde{h}$.}\label{fig:h}
\end{figure}
Therefore $\widetilde{\eta}=-\alpha_{std}-\widetilde{h}(r) d\theta$ defines a distribution $\widetilde\xi$ over $\sS^1\times [0,2) \times \sS^{2n-1} \cong B^2(2) \times \sS^{2n-1}$. Note that $\widetilde\eta$ is a contact form near the core $\{0\}\times\sS^{2n-1}$. We can glue the manifold $(M\setminus V, \xi)$ and $(B^2(2) \times \sS^{2n-1}, \ker\widetilde{\eta})$ with the gluing map $g$ to define an almost contact manifold $(\widetilde{M}, \widetilde{\xi})$. This manifold satisfies the statement of the theorem with $\SN(S)=\Phi(\sS^1 \times B^{2n}(1))$.
\end{proof}
\subsection{Compatibility with an almost contact pencil.}\label{ssec:comp}
Let $(f,B,C)$ be a good almost contact pencil on a $5$--dimensional almost contact manifold $(M,\xi,\omega)$. The almost contact structure $(\xi_1,\omega_1)$ obtained in Lemma \ref{lemma:expand} can be chosen to remain adapted to the almost contact pencil $(f,B,C)$ (this can be done by proving a standard neighborhood theorem using the local models provided by the definition of a good almost contact pencil). Let us understand the choices involved in the Theorem \ref{thm:blow-up}. The map $f$ pulls--back to
$$f\circ\Pi:\widetilde M\setminus E\longrightarrow\CP^1.$$
Due to the surgery procedure it can be extended to a map $\widetilde f:\widetilde M\longrightarrow\CP^1$. Let us explain this.\\

\noindent The first choice in the previous construction is the chart map $\phi: \sS^1 \times B^{2n}(\rho_0) \longrightarrow U$ for a neighborhood $U$ of a connected component $\gamma\cong\sS^1$ in the base locus $B$. This amounts to a choice of framing of the trivial normal bundle along this $\sS^1$. Since $\sS^1\subset B$ we can use the adapted charts in Definition \ref{def:almost_pencil} and require that $\phi$ satisfies that the map
$$f \circ \phi: \sS^1 \times(B^{4}(\rho_0)\backslash\{ 0 \}) \longrightarrow \CP^1$$
is precisely $(f \circ \phi)(\theta, w_1, w_2)= [w_1 : w_2]$. Therefore, the compactified fibres are of the form $\sS^1 \times L$, for any complex line $L\subset \C^2$. It is also satisfied that $(f \circ \phi \circ \psi_k)(\theta, w_1, w_2)= [w_1 : w_2]$ and again the same compactification for the fibres still holds. Moreover the fibres are almost contact. It is left to study the effect of $\varphi$ and $\phi_1$.\\

\noindent The deformation performed in the enlargement of the neighborhood from $(\xi_0, \omega_0)$ to $(\xi_1, \omega_1)$ preserves the fibres as almost contact submanifolds. The reason being that in Lemma \ref{lemma:expand} the fibres in the coordinates $(\theta, \rho, \sigma)=(\theta, \rho, w_1, w_2)$ are given by the equation
$$ F_{z}= \{ (\theta, \rho, w_1, w_2): [w_1:w_2]=z\}\quad\mbox{for }z\in \CP^1,$$
and the restriction of $(\xi_1, \omega_1)$ is given by
$$(\ker\{ d\theta + H(\rho) (\alpha_{std})_{|_{\sS^3 \cap L_z}}\},\SH(\rho)d\rho\wedge(\alpha_{std})_{|_{\sS^3 \cap L_z}}),$$
where $L_z$ is the line represented by $z\in\CP^1$ and $\SH$ is a smooth function which equals $\partial_\rho H$ in the region of radius $\rho\in[0,\rho_a)\cup(\rho_b,\rho_k]$ and it is strictly positive for $\rho\in[\rho_a,\rho_b]$. In particular, $\SH$ is positive and the restriction of $\omega_1$ is indeed a symplectic structure. \\

\noindent Let us focus on the compactification of fibres in $\widetilde M$, i.e. the extension of $\widetilde{f}$ from $\Pi^{-1}(M\setminus\SN(B))$ to $\widetilde{M}$. We first restrict ourselves to the transition region $\sS^1\times(3/2,2)\times\sS^3\subset\sS^1\times\C^2$. The gluing map is $\phi \circ\psi_k\circ\varphi\circ\phi_1$. In order to understand the fibres we just need to describe the map $\widetilde{f}=f\circ g=f\circ\phi\circ\psi_k\circ\varphi\circ\phi_1$. We can easily verify that
$$\widetilde{f}(\theta,r,w_1,w_2)=(f\circ g)(\theta,rw_1,rw_2)=[w_1:w_2]$$
since $\phi \circ\psi_k\circ\varphi$ and $\phi_1$ act as complex scalar multiplication in the transition area.\\

\noindent Notice that the domain of definition of $\widetilde{f}$ is $\sS^1\times(3/2,2)\times\sS^3$, and it is invariant with respect to the coordinates $(\theta,r)\in\sS^1\times(3/2,2)$. Hence, the map $\widetilde{f}$ extends trivially to the model $(B^2(2)\times\sS^3,\ker\widetilde{\eta})$. In particular, the extension of $\widetilde{f}$ restricted to the exceptional divisor $\{0\}\times\sS^3$ is the Hopf fibration.\\

\noindent The fibres of the fibration $\widetilde f$ are thus almost contact submanifolds. The critical locus $\widetilde C$ is in bijection with $C$ and it is a contact submanifold since the almost contact structure remains unchanged near them. The exceptional divisors $E$ are also contact submanifolds and the fibres of $\widetilde{f}$ restricted to $(B^2(2)\times\sS^3,\ker\widetilde{\eta})$ are diffeomorphic to $B^2(2)\times\sS^1$, the $\sS^1$--factor being a transverse Hopf fibre. These fibres are also contact submanifolds.

\subsection{The good ace fibration}\label{ssec:gace} The fibres $\widetilde{F}$ of the Lefschetz fibration $(\widetilde{f},\widetilde C)$ differ from the fibres $F$ of $(f,B,C)$. Let us provide a precise description of $\widetilde{F}$ and show that the procedure described in the previous two subsections can be performed to obtain $c_1(\widetilde{\xi}_{\widetilde{F}})=0$. This concludes Theorem \ref{thm:good_blowup}.\\

\noindent The trivialization of a neighborhood of a connected component $\gamma\cong\sS^1\subset B$ of the base locus provided in Definition \ref{def:almost_pencil} induces a natural framing $\nu_S\cong\sS^1\times\C^2$, i.e. $\langle(1,0),(i,0),(0,1),(0,i)\rangle$. It restricts to a framing inside the two fibres corresponding to the two complex axes of $\C^2$. Hence it induces framings in any complex line $\sS^1\times\C\subset\sS^1\times \C^2$: for the complex line $\{(z,w)\in\C^2:z-\alpha w=0\}$, we use $\langle(\alpha,1),i(\alpha,1)\rangle$. Denote by $\mathbb{F}_p(0)$ such framing of $B\subset\overline{f^{-1}(p)}$. Let $\mathbb{F}_p(n)$ be the $n$--twist of $\mathbb{F}_p(0)$ and $k_\gamma$ be the parameter used in the construction of Theorem \ref{thm:blow-up} when performing the surgery along $\gamma$.

\begin{lemma}
Let $(M,\xi,\omega)$ be an almost contact 5--manifold, $(f, B, C)$ a good almost contact pencil adapted to it and $(\widetilde{M} ,\widetilde{\xi},\widetilde\omega)$ a manifold as described in Theorem \ref{thm:blow-up}. Then $(\widetilde{M} ,\widetilde{\xi},\widetilde\omega)$ has an almost contact fibration
$(\tilde{f},\widetilde C)$ that coincides with $(f,B,C)$ away from $B=\gamma_1\cup\ldots\cup\gamma_s$. Near $\gamma\in B$ the fibre over $p\in\CP^1$ is contactomorphic to a transverse contact $(0,1)$--surgery performed on $\overline{f^{-1}(p)}$ along $\gamma_i$ with framing $\mathbb{F}_p(-k_i-1)$, for some $k_i\in\Z$. The restriction of the map $f$ to each of the exceptional
divisors is given by the Hopf fibration.
\end{lemma}

\begin{proof}
The map $\psi_k$ in Theorem \ref{thm:blow-up} modifies the initial framing from $\mathbb{F}_p$ to $\mathbb{F}_p(-k_i)$, $k_i=k_{\gamma_i}$ being the corresponding parameter $k$ in the surgery along $\gamma_i$. Using the map $\phi_1$ substracts another twist and sends the meridian to the longitude of the added solid torus. It is thus a $(p,q)=(0,1)$--Dehn surgery with respect to $\mathbb{F}_p(-k_i-1)$.
\end{proof}

\noindent Note that the coefficients $k_i$ can be arbitrarily chosen. The constructive argument will use the fact that $c_1(\widetilde{\xi}_{\widetilde{F}})=0$ for any fibre $\widetilde{F}$ of $\widetilde f$. This has been achieved for the initial fibres of the pencil. The procedure changes the almost contact manifold $(F,\xi)$ to $(\widetilde{F},\widetilde{\xi})$ and we cannot directly assume that $c_1(\widetilde{\xi}_{\widetilde{F}})=0$. This will be fixed in the following discussion.
\begin{proposition}\label{prop:choice}
Let $(M,\xi,\omega)$ be an almost contact 5--manifold, $(f, B, C)$ a good almost contact pencil adapted to it and $(\widetilde{M} ,\widetilde{\xi},\widetilde\omega)$ a manifold obtained as in Theorem \ref{thm:blow-up}. Suppose that $(f,B,C)$ is obtained via asymptotically holomorphic sections as in Corollary \ref{coro:starting_pencil}. There is a choice of $(k_1,\ldots,k_s)\in \Z^s$ such that the first Chern class of the almost contact structure $(\widetilde{M},\widetilde{\xi},\widetilde{\omega})$ on any regular fibre $\widetilde{F}$ is zero.\label{prop:good_ac}\end{proposition}

\noindent In the proof there is no need for the sections to be asymptotically holomorphic. The only requirement is that the pencil is obtained as the linear system associated to two sections.

\begin{proof}
Consider a connected component $\gamma\subset B$. The good almost contact pencil is obtained from a section
$$s=(s_0, s_1): M \longrightarrow \C^2 \otimes\det(\xi).$$
and it is the input of Corollary \ref{coro:starting_pencil}.\\

\noindent Suppose that the section $(s_0,s_1)$ can be lifted to a non--vanishing section $(\widetilde{s}_0, \widetilde{s}_1)$ from the manifold $\widetilde M$ to the bundle  $\C^2 \otimes \det \widetilde{\xi}$. That is, the map $\tilde{f}$ comes as a quotient of two sections $(\widetilde{s}_0, \widetilde{s}_1)$ of the bundle $\det \widetilde{\xi}$. Then Lemma \ref{lem:zero} implies that its regular fibres satisfy the required property. Hence, we just need to find a non--vanishing lift of the two sections $(s_0,s_1)$. Let us show that this lift exists for a particular choice of integers $(k_1, \ldots, k_s)$.\\

\noindent The study of sections of a complex bundle $\det\xi$ with $\xi\subset TM$ does not depend on the homotopy class of $\xi$ as a complex subbundle of $TM$. In particular, we can deform $\xi$ to a complex subbundle $\xi_h$ and study the extension properties of two sections of $\det(\xi_h)$ corresponding to a deformation of $(s_0,s_1)$. The bundle $\xi_h$ yields simpler computations. A word of caution, the notation $\xi_h$ will now be used to refer to a distribution in a local chart and not in the manifold $M$ itself.\\

\noindent Consider polar coordinates $(\theta;r,\sigma)\in\sS^1\times B^4(2)$. The pull--back of the distribution $\xi$ by the map $\Phi=\phi\circ\psi_k\circ\varphi$ is
$$\Phi^*(\xi)=\ker\eta,\quad \eta=d\theta+r^2\alpha_{std}.$$
Let $\chi:[0,2]\longrightarrow[0,1]$ be a smooth increasing function such that
$$\chi|_{[0,1.7]}=0\mbox{ and }\chi|_{[1.9,2]}=1.$$
Define the form $\eta_h=d\theta+\chi(r)r^2\alpha_{std}$ and the distribution $\xi_h=\ker\eta_h$. The distribution $\Phi_*\xi_h$ can be extended to the manifold $M$ using $\xi$. A linear interpolation between $\eta$ and $\eta_h$ induces a homotopy between the two complex bundles $\Phi^*\xi$ and $\xi_h$. The map $\phi_1$ is a diffeomorphism in $\sS^1\times(1.5,2)\times\sS^3$. The pull--backs of the kernels of these two forms via the map $\phi_1|_{\sS^1\times(1.5,1.7)\times\sS^3}$ are two distributions $\phi_1^*(\ker\eta)$ and $\phi_1^*(\ker\eta_h)$. \\

\noindent Consider the function $\widetilde{h}$ defined in the proof of Theorem \ref{thm:blow-up} and a smooth increasing function $\sigma: [0, \infty) \longrightarrow [0, \pi/2]$ constant equal to $0$ in $[0,1/2]$ and constant equal to $\pi/2$ in $[1.5,\infty)$. Define also the form
$$\widetilde{\eta}_h=\sin(\sigma(r))d\theta+\cos(\sigma(r))\alpha_{std}.$$

\noindent First, the kernel of the contact form $\widetilde{\eta}=\alpha_{std}+\widetilde{h}d\theta$ extends the distribution $\phi_1^*(\ker\eta)$ to $B^2(1.7)\times\sS^3$, with polar coordinates $(r,\theta)\in B^2(1.7)$. Let $\widetilde\xi$ be the push--foward to the manifold of $\ker\widetilde\eta$ extended by $\phi_1^*(\ker\eta)$. Second, the distribution $\phi_1^*(\ker\eta_h)$ coincides with $\ker d\theta$ in $\sS^1\times(1.5,1.7)\times\sS^3$ and $\ker\widetilde{\eta}_h$ extends $\phi_1^*(\ker\eta_h)$ to $B^2(1.7)\times\sS^3$. Let $\widetilde\xi_h$ be the push--foward to the manifold of $\ker\widetilde\eta_h$ extended by $\phi_1^*(\ker\eta_h)$. The distributions $\ker\widetilde\eta$ and $\ker\widetilde{\eta}_h$ are homotopic via linear interpolation. The homotopy coincides with the homotopy between $\Phi^*\xi$ and $\xi_h$ in the region $\sS^1\times(1.5,1.7)\times\sS^3$. Hence, the homotopy extends to a homotopy between $\widetilde\xi$ and $\widetilde\xi_h$ inside the manifold $M$.\\

 \noindent Let $X_r=\partial_r,X_i=iX_r,X_j=jX_r,X_k=kX_r$ be a basis generating $T\C^2=\C^2\cong\mathbb{H}^1$. 
Consider the chart defined by $\phi$ with polar coordinates
$$(\theta; r, w_0,w_1)\in\sS^1\times\C^2\cong\sS^1\times\R^{\geq0}\times\sS^3.$$
The distribution $\xi_h=\ker d\theta$ will be identified with $\C^2$. The original sections $(s_0,s_1)$ will be identified as sections of $\Phi_*\det\xi_h$. Suppose the sections $(s_0,s_1)$ restrict to an $m$--twisted frame, i.e. in the chart above the pair of sections is written up to homotopy as 
$$\phi^*(s_0, s_1) \simeq e^{m\cdot i\theta}(w_0, w_1) (1,0)\wedge(0,1).$$
The change of coordinates is defined, up to homotopy, by
$$(\psi_k \circ\varphi\circ\phi_1)(\theta, r, w_0, w_1)= (\theta, r, e^{i(1+k)\theta}w_0, e^{i(1+k)\theta}w_1).$$
It pulls--back the basis framing to
$$(\psi_k \circ\varphi\circ\phi_1)^* (1,0) \wedge (0,1)= e^{-2i(1+k)\theta}(1,0) \wedge (0,1).$$
Therefore the pull--back of the 2 sections is
\begin{eqnarray*}
(\Phi\circ\phi_1)^* (s_0, s_1) & = & (\phi\circ\psi_k \circ\varphi\circ\phi_1)^*  \simeq  e^{(m-k-1)\cdot i\theta}(w_0, w_1) (1,0)\wedge (0,1)= \\ & = &
e^{(m-k-1)\cdot i\theta}(w_0, w_1) X_r\wedge X_j
 =  -i e^{(m-k-1)\cdot i\theta}(w_0, w_1) X_i\wedge X_j.
\end{eqnarray*}
Observe that $k$ controls the twisting of the section around the component $\gamma$. The distribution $\xi_h$ is extended to $B^2(1.7)\times\sS^3$ with the distribution $\Phi^*\widetilde\xi_h$. The four vector fields $X_r,X_i,X_j,X_k$ define a framing of $\xi_h$ in $\sS^1\times(1.5,1.7)\times\sS^3$. This framing needs to be extended to the interior $B^2(1.7)\times\sS^3$ to a framing of the distribution
$$\ker\widetilde\eta_h=\ker\{\sin(\sigma(r))d\theta+\cos(\sigma(r))\alpha_{std}\}.$$
A possible extension is given by $\langle X_r,\sin(\sigma(r))X_i-\cos(\sigma(r))\partial_\theta,X_j,X_k\rangle$.\\

\noindent Consider $p=m-k-1$ and let us identify $\Phi_*\xi_h$ and $\widetilde\xi_h$ in their common region. The section $(\Phi\circ\phi_1)^*(s_0,s_1)$ seen as a section of $\C^2\otimes\det\widetilde\xi_h$ can be extended to
$$(\widetilde{s}_0, \widetilde{s}_1)\simeq -ie^{p\cdot i\theta}(w_0, w_1) (\sin(\sigma(r))X_i-\cos(\sigma(r))\cdot\partial_{\theta})\wedge X_j.$$
Thus it is an extension of the section to $\widetilde{M}$. For radius $r=0$, in the new compactification $B^2(r,\theta)\times\sS^3(w_0, w_1)$, the section reads
$$(\widetilde{s}_0, \widetilde{s}_1)= ie^{p\cdot i\theta}(w_0, w_1) \partial_{\theta}\wedge X_j,$$
which extends without zeroes if and only if $p=-1$. The choice $k=m$ allows us to extend the section $(\widetilde{s}_0, \widetilde{s}_1)$  to the interior of the exceptional sphere without zeroes.\\

\noindent In short, the required section $\widetilde{s}=(\widetilde{s_0},\widetilde{s_1})$ extends to the previous section $s=(s_0,s_1)$ away from the surgery area. Since the sections can be extended to the manifold $\widetilde M$ in a non--vanishing manner we conclude $c_1(\widetilde{\xi}|_{\widetilde{F}})=0$ and the base locus is empty, that is $\widetilde B=\emptyset$.
\end{proof}

\noindent This concludes the proof of Theorem \ref{thm:good_blowup}. The argument developed in this article to prove Theorem \ref{main} requires a smooth fibration, hence the reason for Theorem \ref{thm:good_blowup}. There is an alternative approach not involving the manifold $\widetilde M$ that leads to a quite complicated version of the local models used in Sections \ref{sec:vertical}, \ref{sec:skeleton} and \ref{sec:bands}. These models are essential to describe the deformation of the almost contact structure. The simpler, the better. In particular, the description in Section \ref{sec:bands} would be rather technical if the modified model was used.\\

\section{Vertical Deformation.}  \label{sec:vertical}

In Section \ref{sec:pencils} we endowed our initial $5$--dimensional almost contact manifold $(M,\xi,\omega)$ with an almost contact pencil  $(f,B,C)$ such that $B\neq0$ and $c_1(\xi_F)=0$ for the fibres $F$ of $f$. In Proposition \ref{prop:good_ac} we have obtained a contact structure in a neighborhood of the base locus $B$ and the critical curves $C$. According to Theorem \ref{thm:good_blowup} there exists a good ace fibration $(\widetilde{f},E,\widetilde C)$ in an almost contact manifold $(\widetilde{M},\widetilde{\xi},\widetilde{\omega})$ isomorphic to $(M\setminus\SN(B),\xi,\omega)$ away from a codimension--2 contact submanifold $E$. In order to obtain a contact structure in the manifold $(M,\xi,\omega)$ we use the splitting induced by the existence of the Lefschetz fibration $(\widetilde{f},\widetilde C)$ on  $(\widetilde{M},\widetilde{\xi},\widetilde{\omega})$. Henceforth we shall consider an almost contact manifold with a good ace fibration. These will be respectively denoted $(M,\xi,\omega)$ and $(f,C,E)$ even though in our situation they refer to the manifold $(\widetilde M,\widetilde\xi,\widetilde\omega)$ and the good ace fibration $(\widetilde f,\widetilde C,E)$. This should not lead to confusion. The initial manifold is recovered in Section \ref{sec:end}.\\

\noindent Let $(M,\xi,\omega)$ be a $5$--dimensional closed orientable almost contact manifold.

\begin{definition}
An almost contact structure $(M,\xi,\omega)$ is called vertical contact with respect to an almost contact
fibration $(f,C)$ if the fibres of $f$ are contact submanifolds for $(\xi,\omega)$ away from the critical points.
\end{definition}

\noindent The main result of this section reads:

\begin{theorem} \label{thm:vert_defor}
Let $(M,\xi,\omega)$ be an almost contact manifold and $(f,C,E)$ an associated good ace fibration. Then there exists a homotopic deformation of the almost contact structure relative to $C$ and $E$ such that the almost
contact structure becomes vertical contact for $(f,C)$.
\end{theorem}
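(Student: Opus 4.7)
The goal is to deform the vertical component $\xi_v := \xi \cap \ker df$ on the regular part of $f$ so that on each regular fibre $F$ the restricted $2$-plane field $\xi_v|_F$ becomes a contact structure on $F$, while leaving $\xi$ unchanged on prescribed neighborhoods of $C \cup E$. Since each fibre is a $3$-manifold and $\xi_v|_F$ is a cooriented plane field, existence of such a contact structure in the given homotopy class is granted individually by Lutz--Martinet; the content of the theorem is to realize this as a single smooth family over $\sS^2$. The main tool will be Eliashberg's classification of overtwisted contact structures, which supplies the flexibility needed for the family argument.

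First I would fix the splitting $\xi = \xi_v \oplus \SH$ using the $\omega$-orthogonal complement on $M\setminus C$, and check that on a neighborhood of $C \cup E$ the plane field $\xi_v$ already restricts to a contact distribution on the slices cut out by the fibres. Along an exceptional divisor $E_j$, the local model $\ker\{\alpha_{std} + \widetilde{h}(r)d\theta\}$ from Section~\ref{sect:blowup} intersects each fibre $\D^2(2) \times \sS^1$ in a disc on which $\widetilde{h}(r)d\theta$ restricts to a contact primitive; at a critical loop, the normal form of item~(3) in Definition~\ref{def:almost_pencil} combined with the good contact hypothesis plays the analogous role. These already-contact collars furnish the boundary conditions which the deformation must respect.

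Next I would construct the fibrewise contact structures in a parametric way. The vanishing $c_1(\xi_F)=0$ supplied by Corollary~\ref{coro:starting_pencil} and Proposition~\ref{prop:choice} trivializes $\xi_v|_F$ as an $SO(2)$-bundle, so its homotopy class of plane fields is a global, locally constant invariant across regular fibres. I would pick a cell decomposition of $\sS^2$ relative to the critical values $f(C)$, choose local sections of $f$ over each cell, and use them to trivialize the fibration as $F \times D^k$ over each cell. Over each such cell I would plant a common Eliashberg overtwisted disc model into a small $3$-ball inside the fibres (disjoint from the collar of $C \cup E$) and then invoke the parametric consequence of Eliashberg's classification --- that the space of overtwisted contact plane fields containing a fixed overtwisted disc and realizing a fixed homotopy class of plane fields is weakly contractible --- to produce a smooth $D^k$-family of overtwisted contact structures in the homotopy class of $\xi_v|_F$. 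Induction on the cells extends the family over all of $\sS^2$, and Lemma~\ref{lem:split} then yields the required homotopy of almost contact structures between $\xi$ and the new vertical distribution rel $C \cup E$.

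The main obstacle is the parametric step: Eliashberg's uniqueness only provides isotopies fibre by fibre, and these must be assembled into a smooth family compatible with the data prescribed on the previously constructed skeleton and on the collars of $C \cup E$. The vanishing of $c_1(\xi_F)$ is precisely what makes this possible, since it rules out monodromy obstructions to a globally consistent choice of plane-field homotopy class, reducing the assembly to a standard obstruction-theoretic extension problem with contractible fibres. A secondary delicate point is the uniformity of the overtwisted disc model across fibres, which I would handle by taking the disc to sit in a tubular neighborhood of a section of $f$ and using that the normal bundle of this section is trivial, so the model disc transports canonically from fibre to fibre.
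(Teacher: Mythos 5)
You have the right overall strategy --- trivialize the fibration over cells of $\sS^2$, invoke the parametric form of Eliashberg's classification, and conclude with Lemma~\ref{lem:split} --- and you correctly identify the crux: assembling a coherent family of overtwisted disks across fibres. But the mechanism you propose to solve it does not work, and this is precisely where the paper's proof does something more delicate.

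You would have the overtwisted disks sit in a tubular neighborhood of ``a section of $f$'' and be transported from fibre to fibre using triviality of its normal bundle. No such global section is guaranteed to exist, and the natural section-like object that is available --- an exceptional divisor $E_j\cong(\sS^3,\xi_{std})$ --- projects to $\sS^2$ by the Hopf map, whose fibres are circles, not points. There is therefore no single continuous choice of overtwisted disk over all of $\sS^2$. The paper's Lemma~\ref{lem:local_family} gets around this by pushing two parallel copies $\sS^3\times\{q_0\}$ and $\sS^3\times\{q_\infty\}$ of the exceptional sphere into the normal direction, performing full Lutz twists along the resulting transverse knots in each fibre, and then trivializing the Hopf bundle over two overlapping hemispheres $\SB_0,\SB_\infty$; this yields \emph{two} disjoint continuous families $\SD^0_z$ (over $\SB_0$) and $\SD^\infty_z$ (over $\SB_\infty$) whose domains together cover $\sS^2$. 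When Corollary~\ref{coro:eliash2} is applied over a cell, the one family defined over that cell is used and a neighborhood of the other is placed inside the fixed ``good'' set $G$, so that it survives the deformation and is still a standard overtwisted disk when needed on later cells; the two special $2$--cells $\SC_0,\SC_\infty$ containing $\partial\SB_0,\partial\SB_\infty$ are handled with the one family that extends over them. Your ``induction on the cells'' silently skips this: if you plant a fresh disk over each cell independently, the deformation over earlier cells need not keep intact the disk you will want later, and Eliashberg's theorem requires a disk that remains standard overtwisted throughout the family on the current cell, including over the already-deformed boundary strata.

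Two smaller inaccuracies. The condition $c_1(\xi_F)=0$ plays no role in this theorem: the homotopy class of $\xi_v|_{F_z}$ is automatically locally constant because $\xi_v$ is given continuously on $M\setminus U(f)$; the Chern class hypothesis is used only later, in Section~\ref{sec:bands}, to build a global framing of $\xi_v$ for the explicit model form $\alpha_f$. And the bare ``weak contractibility'' phrasing of Eliashberg's theorem is not quite enough for the gluing: the paper needs the sharper Corollary~\ref{coro:eliash2}, which produces a deformation that is literally the identity near the boundary of a thickened parameter ball, so that cell-by-cell deformations can be extended trivially outside their support.
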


\noindent The proof of the theorem relies on the existence of an overtwisted disk in each fibre, such structure allows more flexibility in handling families of distributions. Hence, it will be essential for the argument to apply that the fibres of the good ace fibration $(f,C,E)$ are $3$--dimensional manifolds. In order to obtain a vertical contact fibration we need Eliashberg's classification result of overtwisted contact structures ~\cite{El}.\\

\noindent The almost contact structure obtained in Theorem \ref{thm:vert_defor} is constructed as a deformation of the vertical distributions $\{ \xi_z= \xi \cap Tf^{-1}(z) \}_{z\in \CP^1}$ relative to open neighborhoods of $C$ and $E$. A naive description of the argument consists of two parts. An overtwisted disk is first introduced in each fibre. This is the content of Subsection \ref{subsec:otdisks}. Then Eliashberg's result allows us to deform the family $\{ \xi_z \}_{z\in \CP^1}$ to a family of overtwisted contact structures. This corresponds to Subsection \ref{ssec:adapted}.\\

\noindent This argument cannot be readily applied because of two issues. On the one hand the almost contact fibration does not necessarily admit a section. In particular there is no naturally prescribed continuous family of overtwisted disks. This is solved using two local families to deal with each of the fibres. On the other hand the argument in \cite{El} deals with families of distributions over a fixed manifold. In our case the topology of the fibres changes if a curve in $f(C)$ is crossed. Therefore a refined version of Eliashberg's arguments is needed. It strongly uses the relative character of the result, both with respect to the parameter spaces and the open subsets of the manifold.\\

\noindent A technical step requires to define a suitable finite open cover of $\CP^1$ by 2--disks. In particular, the fibres over each 2--disk are diffeomorphic relative to a certain subset and there exists a continuous choice of overtwisted disks over each of these fibres. This cover is associated to $(f,C)$ and a cell decomposition of $\CP^1$. This will be explained.

\subsection{3--dimensional Overtwisted Structures.}
Our setup provides a fibration with a distribution on each fibre. Given such an almost contact fibration $f:M\longrightarrow\CP^1$, let $F_z$ denote the fibre over $z\in \CP^1$ and $(\xi_z, \omega_z)$ the
induced almost contact structure on $F_z$. Then the family $(F_z, \xi_z)$ can locally be viewed as a $2$--parametric family of $2$--distributions on a fixed fibre.\\

\noindent In the proof of Theorem~\ref{thm:vert_defor} we use a relative version of the following:

\begin{theorem} [Theorem 3.1.1 in \cite{El}]  \label{thm:eliash}
Let $M$ be a compact closed $3$--manifold and let $G$ be a closed subset
such that $M \setminus G$ is connected. Let $K$ be a compact space and $L$ a
closed subspace of $K$. Let  $\{\xi_t \}_{t \in K}$ be a family of cooriented $2$--plane
distributions  on $M$ which are contact everywhere for $t\in L$ and are contact near $G$ for
$t\in K$. Suppose there exists an embedded $2$--disk $\SD \subset M \setminus G$ such that
$\xi_t$ is contact near $\SD$ and $(\SD, \xi_t)$ is equivalent to the standard
overtwisted disk for all $t\in K$. Then there exists a family $\{ \xi_t' \}_{t\in
K}$ of contact structures of $M$ such that $\xi_t'$ coincides with $\xi_t$ near $G$
for $t\in K$ and coincides with $\xi_t$ everywhere for $t\in L$. Moreover $\xi_t'$
can be connected with $\xi_t$ by a homotopy through families of distributions that
is fixed in $(G \times K) \cup (M \times L)$.
\end{theorem}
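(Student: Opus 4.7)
The result is Eliashberg's parametric classification theorem for overtwisted structures on closed $3$-manifolds, and the plan is to follow the cellular induction scheme of its original proof. The overall strategy is a double induction: first decompose $(M,G)$ as a relative CW complex, refined so that the overtwisted disk $\SD$ lies in the interior of a top-dimensional cell, and independently decompose $(K,L)$ as a relative CW pair. I would then modify the family $\{\xi_t\}$ by induction on the combined skeleton, reducing at each stage to a local extension problem posed on a single $3$-ball in $M$ and a single cell in $K$, with prescribed contact behavior on all boundary data.

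The core technical step is a local extension lemma: let $B\subset M\setminus G$ be a closed $3$-ball, let $D^k$ be a closed disk of parameters, and assume $\{\xi_t\}_{t\in D^k}$ is a family of plane fields on $B$ that is contact near $\partial B$ for all $t\in D^k$ and contact on all of $B$ for $t\in\partial D^k$; then the family can be homotoped, rel $(\partial B\times D^k)\cup(B\times\partial D^k)$, to a family of genuine contact structures on $B$. The import mechanism is Eliashberg's ``pipe'' construction: one picks a thickened arc in $M\setminus G$ connecting $B$ to $\SD$, enlarges $B$ to a ball $B'$ containing both $B$ and a neighborhood of $\SD$, and uses a parametric Gray-type isotopy along the pipe to move the germ of $\SD$ into the boundary data of $B'$. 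This reduces the local problem to a family of plane fields on a $3$-ball whose boundary germ contains a standard overtwisted disk.

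Under these assumptions the extension inside $B'$ follows from the fact that, on such a ball, the inclusion of contact structures into plane fields with fixed overtwisted boundary germ is a weak homotopy equivalence. I would establish this by exhibiting explicit ``overtwisted shells'' obtained from iterated half-Lutz twists in the standard Darboux model around a neighborhood of the overtwisted disk, interpolating in closed form between any two plane fields with the same boundary germ. Because these interpolations are given by explicit formulas depending smoothly on the boundary data, the parametric statement over $D^k$ follows immediately from the unparametrized one.

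The main obstacle, and the reason the full theorem is substantially deeper than its non-parametric version, is the compatibility of the successive cellwise modifications with three constraints at once: the relative condition near $G$, the relative condition at $t\in L$, and the fact that every cell in the induction must draw on the same physical disk $\SD$ as its flexibility reservoir. The first two constraints can be arranged automatically from how the piping and the overtwisted shells are supported. For the third, I would exploit the reversibility of the pipe construction: each cellwise application is followed by a separate isotopy, supported in a neighborhood of $\SD$, that restores $\SD$ to its standard form at every parameter, so that $\SD$ is available again for the next cell. Assembling these cellwise deformations in the order prescribed by the double induction yields the desired family $\{\xi'_t\}$ together with the required homotopy rel $(G\times K)\cup(M\times L)$.
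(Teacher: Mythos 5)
This statement is not proved in the paper at all: it is quoted verbatim from Eliashberg's article \cite{El} (Theorem 3.1.1 there) and used as a black box. The only proof-like text the authors supply is the one-paragraph outline accompanying Corollary \ref{coro:eliash}, which simply records that Eliashberg's argument is local, works cell-by-cell on a triangulation $P$ with a subtriangulation $Q\supset G$ left untouched, and therefore survives the passage to manifolds with boundary. Your proposal is consistent with that outline at the level of architecture: the relative cellular induction on $(M,G)$ and on $(K,L)$, the reduction to a local extension problem on a single $3$--ball over a single parameter cell, and the ``piping'' of that ball to the overtwisted disk $\SD$ are all genuinely the skeleton of Eliashberg's proof.

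However, as a proof your proposal has a real gap exactly where the theorem's content lies. The local extension step --- that a $D^k$--family of plane fields on a $3$--ball, contact near the boundary and carrying a standard overtwisted disk in the boundary germ, can be homotoped rel $(\partial B\times D^k)\cup(B\times\partial D^k)$ to a family of contact structures --- is asserted three times over rather than established: first as ``the fact that \dots the inclusion of contact structures into plane fields with fixed overtwisted boundary germ is a weak homotopy equivalence'' (which is precisely the theorem, localized to a ball, so this is circular); then via unspecified ``overtwisted shells'' and ``closed-form'' interpolations between arbitrary plane fields with the same boundary germ (if such interpolations through the relevant spaces existed in closed form, there would be nothing left to prove); and finally via the claim that ``the parametric statement over $D^k$ follows immediately from the unparametrized one,'' which is false as a general principle and is not automatic here. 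In Eliashberg's actual argument this step consumes most of the paper: one foliates the ball by $2$--spheres, reduces to controlling the characteristic foliations of the members of the family, classifies contact structures on the ball in terms of suitably ``simple'' foliations, and proves an elimination lemma for families of functions; the parametric flexibility supplied by the overtwisted disk must be extracted from that analysis, not assumed. Your treatment of reusing the single disk $\SD$ for successive cells (restore it to standard form after each cell) is plausible but likewise only asserted. In short, the outline is faithful, but the theorem is not reproved.
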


\noindent In order to allow the case of a $3$--manifold with non--empty boundary we also need:

\begin{corollary} \label{coro:eliash}
Let $M$ be a compact $3$--manifold with boundary $\partial M$ and let $G$ be a closed 
subset of $M$  such that $M \setminus G $ is connected and $\partial M \subset G.$ 
Let $K$ be a compact space and $L$ a closed subspace of $K.$
Let  $\{ \xi_t \}_{t \in K}$ be a family of cooriented $2$--plane distributions on $M$ 
which are contact everywhere for $t\in L$ and are contact near $G$ for $t\in K$. Suppose
there exists an embedded $2$--disk $\SD \subset M\backslash G$ such that $\xi_t$ is contact
near $\SD$ and $(\SD, \xi_t)$ is equivalent to the standard overtwisted disk for all
$t\in K$. Then there exists a family $\{ \xi_t' \}_{t\in K}$ of contact structures
of $M$ such that $\xi_t'$ coincides with $\xi_t$ near $G$ for $t\in K$ and coincides
with $\xi_t$ everywhere for $t\in L$. Moreover $\xi_t'$ can be connected with
$\xi_t$ by a homotopy through families of distributions that is fixed in $(G\times K)
\cup (M \times L)$.
\end{corollary}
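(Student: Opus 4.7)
The plan is to reduce to Theorem \ref{thm:eliash} by capping off $\partial M$ to form a closed $3$-manifold $\widetilde{M}$ and extending the family $\{\xi_t\}_{t\in K}$ continuously to a family of distributions on $\widetilde{M}$ that is contact on $\widetilde{M}\setminus\operatorname{int}(M)$ for every $t$ (and contact globally for $t\in L$). Since $\partial M\subset G$ and $\xi_t$ is contact near $G$, the hypotheses already furnish contact germs along $\partial M$, so the core issue is a parametric extension problem.

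To implement this I would first choose a compact $3$-manifold $N$ with $\partial N=\partial M$ --- for instance a disjoint union of handlebodies filling each boundary component --- and set $\widetilde{M}=M\cup_{\partial M}N$. Using that $\xi_t$ is contact on a collar $U=\partial M\times[-\epsilon,0]\subset M$, I would select contact $1$-forms $\alpha_t$ defining $\xi_t|_U$ continuously in $t\in K$, prolong the coefficient functions of $\alpha_t$ into an external collar $\partial M\times[0,\epsilon]$ sitting inside $N$ (the contact condition being open, any smooth prolongation remains contact for $\epsilon$ small), and then extend across the remainder of $N$. For the final extension I would fix a reference contact filling $\eta$ of $N$, which exists by Martinet's theorem, use a continuous family of germ contactomorphisms carrying the pushed-out family of germs onto that of $\eta$ on a second collar inside $N$, and pull back $\eta$ through these contactomorphisms. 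A finite cover of the compact parameter space $K$ together with a partition of unity applied at the level of the germ contactomorphisms (rather than the forms themselves) then produces the desired continuous family $\{\widetilde{\xi}_t\}_{t\in K}$.

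With $\widetilde{G}=G\cup N$, which is closed and has $\widetilde{M}\setminus\widetilde{G}=\operatorname{int}(M)\setminus G$ connected, the overtwisted disk $\SD$ still sits in $\widetilde{M}\setminus\widetilde{G}$; the distributions $\widetilde{\xi}_t$ are contact near $\widetilde{G}$ throughout $K$ and contact everywhere for $t\in L$; and $(\SD,\widetilde{\xi}_t)$ is the standard overtwisted disk for every $t\in K$. Theorem \ref{thm:eliash} applied to the data $(\widetilde{M},\widetilde{G},\{\widetilde{\xi}_t\},\SD)$ then delivers a family of contact structures $\{\widetilde{\xi}_t'\}$ on $\widetilde{M}$ and a homotopy $\widetilde{\xi}_t\rightsquigarrow\widetilde{\xi}_t'$ fixed on $(\widetilde{G}\times K)\cup(\widetilde{M}\times L)$. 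Restricting both the family and the homotopy to $M\subset\widetilde{M}$ yields exactly the $\{\xi_t'\}$ and the homotopy claimed by the corollary.

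The principal difficulty is the parametric extension: being contact is an open but nonconvex condition, so naive partition-of-unity patching of contact forms can fail. This is why the patch must be performed at the level of germ diffeomorphisms acting on a single reference filling of $N$ rather than directly on the defining $1$-forms. Once that bookkeeping is carried out the remainder of the argument is a direct transcription of the closed-manifold case, the only subtlety being the verification that $\widetilde{M}\setminus\widetilde{G}$ remains connected --- which holds because $\partial M\subset G$ forces $\widetilde{M}\setminus\widetilde{G}=M\setminus G$.
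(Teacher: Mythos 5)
Your route (cap off $\partial M$, extend the family over the cap, and invoke Theorem \ref{thm:eliash} for closed manifolds) is genuinely different from the paper's, and it founders on the extension step. Two concrete problems. First, the formal extension need not exist: a cooriented plane field defined near $\partial M$ corresponds, after trivializing $TN|_{\partial N}$, to a map $\partial N\to\sS^2$, and the obstruction to extending this map over the cap $N$ lives in $H^*(N,\partial N)$ (a relative Euler class plus a degree); already for $N=B^3$ a degree--one map $\sS^2\to\sS^2$ does not extend, so your plane fields $\xi_t$ may simply not prolong over the chosen filling. Second, and more seriously, the matching step is unjustified: the germ of a contact structure along a surface is a strong invariant, determined by the induced characteristic foliation (equivalently, by Giroux's dividing--set data), and the foliations induced by $\xi_t$ on $\partial M$ vary with $t$ and have no reason to agree with that of a fixed reference filling $\eta$ of $N$. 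Hence the ``continuous family of germ contactomorphisms carrying the pushed--out family of germs onto that of $\eta$'' generally does not exist, and performing the partition of unity at the level of contactomorphisms rather than forms does not help, because the target germs themselves differ. Making the boundary convex and realizing the dividing sets parametrically over all of $K$ would be a substantial piece of convex--surface theory that you would need to supply.

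The paper avoids all of this by not capping off at all: Eliashberg's proof of Theorem \ref{thm:eliash} proceeds by choosing a triangulation $P$ of the $3$--manifold with a subtriangulation $Q$ containing $G$, and then works locally on neighborhoods of the skeleta of $P\setminus Q$, never modifying anything near $Q$. Since such a pair $(P,Q)$ exists just as well for a compact manifold with boundary, and one can arrange $\partial M\subset Q$ because $\partial M\subset G$, the closed--case argument applies verbatim. If you want to salvage your reduction, you would need to either prove the parametric contact extension over the cap (addressing both the formal obstruction and the germ--matching) or replace it by the local/relative argument above.
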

\begin{proof}[Outline]
The proof for the closed case uses a suitable triangulation $P$ of the $3$--manifold having a subtriangulation $Q$ containing $G$, for which the distributions are already contact structures. Then Eliashberg's argument is of a local nature, working with neighborhoods of the $0$, $1$, $2$ and $3$--skeleton of $P\backslash Q$ and assuring that no changes are made in a neighborhood of $Q$. Thus the method for a manifold $M$ with $\partial M\neq0$ is still valid since $P$ and $Q$ do exist in this case and only $Q$ contains the boundary.
\end{proof}

\noindent We locally treat an almost contact fibration as a $2$--parametric family of distributions over a fixed fibre, thus we may use a disk as a parameter space and the central fibre as the fixed manifold. It will be useful to be able to obtain a continuous family of distributions such that the distributions in a neighborhood of the central fibre become contact structures while the distributions near the boundary are fixed. Such a family is provided in the following

\begin{corollary} \label{coro:eliash2}
Consider the notation and hypotheses of Corollary \ref{coro:eliash} with $K$ diffeomorphic to a disk, $\SS=\partial K$ its boundary sphere and coordinates $(p,r)\in\SS\times[0,1]$. Let $\{\xi_t\}$ be a family of distributions parametrized by $\SS\times[0,1]$ which are contact near $G$ and $\SD$. Suppose that $\{\xi_t\}$ are contact distributions for $t\in\lambda\subset\SS\times[0,1]$. Given a homotopy $\xi^s_{(p,0)}$ of the distributions over $\SS\times\{0\}$, $s\in[0,1]$, there exists a homotopy $\{\xi^s_t\}$ relative to $G\times\SS\times[0,1]\cup M\times\lambda$ such that
$$\xi^0_t=\xi_t,\quad \xi^s_t=\xi^s_{(p,0)}\mbox{ for }t=(p,0)\mbox{ and }\xi^1_t=\xi_t\mbox{ for }t=(p,1).$$
\end{corollary}

\noindent The assumption that $K$ is a disk is not necessary. But we use Corollary~\ref{coro:eliash2} only in such a case. Its proof is left as an exercise for the reader.\\

\noindent We need at least one overtwisted disk over each fibre in order to apply Corollary \ref{coro:eliash}. The family should behave continuously. Let us provide such a family of disks.

\subsection{Families of overtwisted disks.}\label{subsec:otdisks}

There are two basics issues to be treated: the location of the disks and their overtwistedness. The second issue is simply guaranteed since once a disk with a contact neighborhood is placed in each fibre we can produce overtwisted disks using Lutz twists. In order to decide the location of the disks in each fibre we need to find a section of the good ace fibration.\\

\noindent Let $(f,C,E)$ be a good ace fibration. Denote by $U(C),U(E_i)$ open neighborhoods of the critical curves $C$ and the exceptional spheres $E_i\in E$. Consider $U(f)=U(C)\cup U(E_i)$ the union of these open neighborhoods, so in the complement of $U(f)$ the map $f$ becomes a submersion. Instead of finding a global section mapping away from $U(f)$, we shall construct two disjoint local sections that will provide at least one overtwisted disk in each fibre $F_z=f^{-1}(z)$. The distribution $\xi_z = \xi \cap TF_z$ is well--defined over $F_z \setminus U(f)$ and varies smoothly with the parameter $z\in \CP^1$. The global situation we achieve is described as follows:

\begin{proposition} \label{propdisc_family}
Let $(f,C,E)$ be a good ace fibration for $(M,\xi,\omega)$. Consider two open disks $\SB_0,\SB_{\infty} \subset
\CP^1$, containing $0$ and $\infty$ respectively such that the intersection $\SB_0\cap\SB_\infty$ is an open annulus, the complement of $\SB_0 \cap \SB_{\infty}$ consists of two disjoint disks and the curves $\partial \SB_0,\partial \SB_\infty$ are disjoint from the set of curves $f(C)$.\\

\noindent Then there exists a deformation $(F_z,\widetilde{\xi}_z)_{z\in \CP^1}$ of the family $(F_z, \xi_z)_{z\in
\CP^1}$ fixed at the intersection of the set $U(f)$ with each $F_z$ such that there are two disjoint families of embedded $2$--disks $\SD_z^i\subset F_z$,
with $z\in\SB_i$, for $i=0, 1$, not intersecting $U(f)$. The distribution $\widetilde{\xi}_z$ is a contact structure in a neighborhood of such families and $(\SD_z^i, \widetilde{\xi}_z)$ are equivalent to standard overtwisted disks.
\end{proposition}

\noindent The fact that $\widetilde{\xi}_z$ equals $\xi_z$ in the intersection of the set $U(f)$ with $F_z$ ensures that no deformation is performed near the critical curves nor the exceptional spheres. This is mainly a global statement, involving the whole of the fibres. In order to prove the result we study the local model of a tubular neighborhood of an exceptional divisor of the good ace fibration $(f,C,E)$.\\

A good ace fibration $(f,C,E)$ is obtained by surgery along the base locus $B$ of a certain good almost contact Lefschetz pencil. Let $K_i$ be a knot belonging to this base locus $B$. After the surgery procedure it is replaced by an exceptional contact divisor $E_i\in E$ contactomorphic to $(\sS^3,\xi_{st})$. As explained in Section \ref{sec:blowup} the restriction of the fibration $f$ to $E_i$ is the Hopf fibration. Since the distribution $\xi$ is locally a contact structure the tubular neighborhood theorem provides a chart
\begin{equation}
\Psi: U  \longrightarrow \sS^3 \times \D^2(\varepsilon),\quad\Psi^* \xi_{st} = \xi \label{eq:chart0}
\end{equation}
where $\xi_{st}= \ker \{ \alpha_{\sS^3} + r^2 d\theta \}$, $\varepsilon\in\R^+$ and $\Psi(E_i)=\sS^3\times\{0\}$. Suppose $\varepsilon=1$ in order to ease notation.\\

\noindent The induced map $f_U$ defined as
$$\xymatrix{
& \sS^3\times \D^2\ar@{->}[rd]^{f_U}\ar@{->}[r]^{\qquad\Psi^{-1}}
&U\ar@{->}[d]^{f}
\\
& & \CP^1
}$$
can be expressed as $f_U(x,r,\theta)=h(x)$ for $x\in\sS^3$. The fibres $F_z=f^{-1}(z)\cap U$ are contact submanifolds of $(\sS^3\times\D^2,\xi_{std})$. The induced contact structure $\xi_v(z)$ on $F_z$ depends on the point $z\in\CP^1$. These fibres are contactomorphic to $(\sS^1\times \D^2, \xi_v= \ker (d\beta+r^2d\theta))$ for each $z\in \CP^1$. Note that the variable $\beta\in \sS^1$ parametrizing each Hopf fibre is not global since the fibration is not trivial. The differential $d\beta$ is globally well--defined since it is dual to the vector field generating the associated $\sS^1$--action. The standard contact structure in $\sS^3\times\D^2$ can be expressed as the direct sum of distributions
\begin{equation}
\xi_{st}(x,r,\theta) = \xi_v(h(x))  \oplus H(x,r,\theta), \label{eq:conn_tubular}
\end{equation}
where $\xi_v$ is the standard contact structure in $\sS^1 \times \D^2$, the vertical direction, and $H$ is a horizontal complement associated to the fibration of $\sS^3 \times \D^2$ over $\CP^1$.\\

\noindent Topologically, the $4$--distribution $\xi_{st}$ is expressed as a direct sum of two distributions of $2$--planes. Since the $2$--form $\omega$ providing the almost contact structure is given and so is $\xi$, we may interpret $(\sS^3 \times\D^2, \xi_v(z))$ as a non--trivial family of contact
structures parametrized by the base ${z\in \CP^1 }$. We have detailed the topology and contact structure of the local model of the good ace fibration along an exceptional sphere $E_i$. A neighborhood of this exceptional sphere is a piece of the fibration and the knots are the intersection of the fibres of the almost contact pencil with it.\\

The local model described above allows us to prove the following

\begin{lemma} \label{lem:local_family}
Let $z\in\CP^1$ be a coordinate, $(\sS^3\times\D^2,\xi_v(z))$ a $\CP^1$--family of contact structures on $\sS^3\times\D^2$ and $f_U:\sS^3\times\D^2\longrightarrow\CP^1$ the map described above. Consider two open disks $\SB_0,\SB_{\infty} \subset
\CP^1$, containing $0$ and $\infty$ respectively such that the intersection $\SB_0\cap\SB_\infty$ is an open annulus and the complement of $\SB_0 \cap \SB_{\infty}$ consists of two disjoint disks.\\

\noindent There exists a homotopy $\xi^s_v(z)$ of $\CP^1$--families of plane fields, $s\in[0,1]$, such that\\
\begin{itemize}
\item[-] $\xi^0_v(z)=\xi_v(z)$, $\forall z\in\CP^1$.\\
\item[-] Near the boundary of $f_U^{-1}(z)\cong\sS^1\times\D^2$ and $\forall(z,s)\in\CP^1\times[0,1]$, $\xi^s_v(z)=\xi_v(z)$.\\
\item[-] For any $z\in\CP^1$, the distribution $\xi^1_v(z)$ is an overtwisted contact structure on $f_U^{-1}(z)$ containing two disjoint Lutz tubes $L^0_z$ and $L^\infty_z$ away from $\sS^3\times\{0\}$.\\
\item[-] There exist a smooth family of embedded overtwisted $2$--disks $\SD_z^0$ in $L_z^0$ for $z\in\SB_0$ and $\SD_z^\infty$ in $L_z^\infty$ for $z\in\SB_\infty$.\\
\end{itemize}
Both $\SB_0\backslash\partial\SB_0,\SB_\infty\backslash\partial\SB_\infty$ can be thought as neighborhoods of the upper and lower semi--spheres.
\end{lemma}

\begin{proof}
Let $h: \sS^3 \longrightarrow \CP^1$ be the Hopf fibration, extend the fibration to $h: \sS^3
\times \D^2 \longrightarrow \CP^1$ by projection onto the first factor. The idea is to use the exceptional divisor to create a couple of sections along $\SB_0$ and $\SB_\infty$. On the one hand, the exceptional divisor has a contact structure and we would rather not perturb around a small neighborhood of it. On the other hand the exceptional divisor is not $\CP^1$ but $\sS^3$. Hence a global section cannot exist. We use two copies of the exceptional divisor away from $\sS^3\times\{0\}\subset \sS^3\times\D^2$ and we cover the base $\CP^1$ with the two disks $\SB_0$, $\SB_\infty$.\\

\noindent Let $q_0=(1/2,0)$, $q_{\infty}=(0,1/2) \in\D^2$ be two fixed points and consider the two $3$--spheres
$$\sS^3_0= \sS^3 \times \{ q_0 \},\qquad \sS^3_{\infty}= \sS^3 \times \{ q_{\infty} \}.$$

\noindent The fibre of the restriction of the fibration $(\sS^3 \times\D^2, \xi_v(z))\longrightarrow\CP^1$ to the
submanifold $\sS^3_0$ (resp. $\sS^3_{\infty}$) is a transverse knot $K_0^z$ (resp. $K_{\infty}^z$). We will now insert two families of overtwisted disks.\\

\noindent Apply a full Lutz twist in a small neighborhood of each of those knots $K_0^z \in h^{-1}(z)$ parametrically on $z\in\CP^1$. This produces a $3$--dimensional full Lutz twist on each fibre. See \cite{Lu1},\cite{Ge}. This yields an $\sS_0^3$--family of overtwisted disks parametrized as $\{\SD^0_{t} \}_{t\in \sS_0^3}$, thus we obtain a $\sS^1$--family of overtwisted disks at each fibre. Note that the dependency of this parametric family of full Lutz twists on the point $z\in\CP^1$ is well--behaved. Indeed, let $i_z:K^0_z\longrightarrow\sS^3_0$ be the injection and consider coordinates $(\rho,\varphi)$ in the normal bundle of this embedding. In a small neighborhood of the zero section, the contact structure reads
$$\xi_v(z)=\ker\{i_z^*\alpha_{\sS^3}+\rho^2d\varphi\}.$$ The pair of functions $(h_1,h_2)$ used in Section 4.3 \cite{Ge} to perform the full Lutz twist can be made $\rho$--dependent. Thus the resulting contact structure has the form
$$\xi^1_v(z)=\ker\{h_1(\rho)\cdot i_z^*\alpha_{\sS^3}+h_2(\rho)\cdot\rho^2d\varphi\}.$$
\noindent This clarifies the dependency of the construction with respect to $z\in\CP^1$.\\

\noindent Perform the same twist procedure for the family of knots $K_{\infty}^z \in
h^{-1}(z)$ to obtain another family of overtwisted disks $\{ \SD_{t}^\infty \}_{t \in
\sS_{\infty}^3}$. The two families of disks can indeed be assumed disjoint by letting the radius in which we perform the full Lutz twists be small enough. The support of the pair of full Lutz twists can be chosen not to intersect the exceptional divisor and be contained in the interior of $\sS^3\times\D^2$. This construction provides the homotopy in the statement of the Lemma.  See Figure \ref{fig:exceptional}.\\

\begin{figure}[ht]
\includegraphics[scale=0.4]{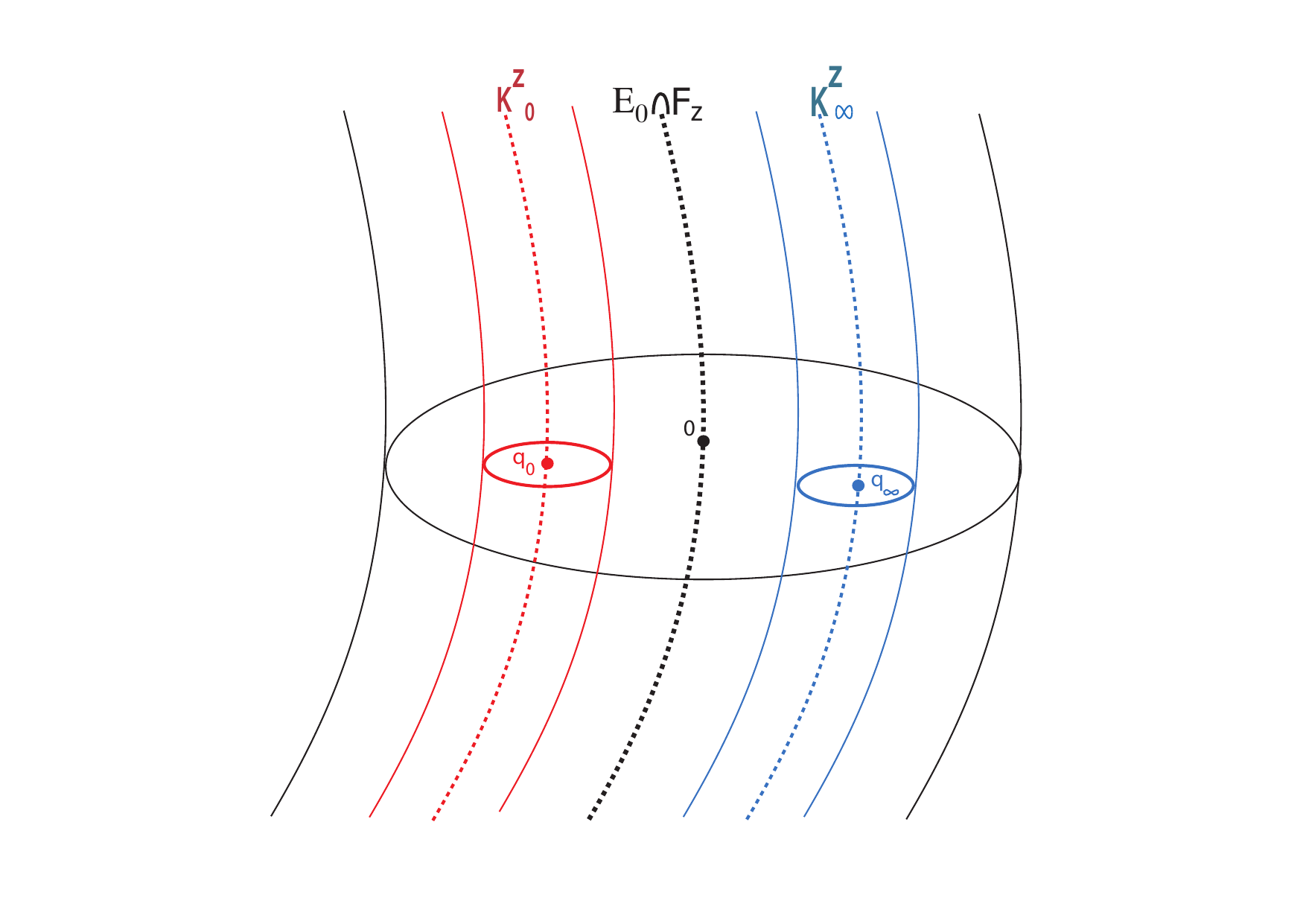}
\caption{The neighborhood of the exceptional divisor intersected with a fibre $F_z$. The cylinder on the left (with axis $K^z_0$) is the support of the full Lutz twist around the knot $K_0^z\cong\sS^1 \times \{ q_0 \}$ and the cylinder on the right (with axis $K_\infty^z$) corresponds to the support of the full Lutz twist around the knot $K_\infty^z\cong\sS^1 \times \{ q_{\infty} \}$. }\label{fig:exceptional}
\end{figure}

\noindent We need the base $\CP^1$ to be the parameter space instead of the $3$--spheres $\sS^3_0$ and $\sS^3_\infty$. Restricted to $\SB_0$ or $\SB_\infty$ the Hopf fibration becomes trivial and therefore there
exist two sections $s_0: \SB_0 \longrightarrow \sS^3 \cong \sS^3_0$ and $s_{\infty}: \SB_{\infty} \longrightarrow \sS^3 \cong \sS_{\infty}^3$. The required families are defined as
$$ \{ \SD_z^0 \} = \{ \SD^0_{s_0(z)} \},   z\in \SB_0, $$
$$ \{ \SD_z^{\infty} \} = \{ \SD^\infty_{s_{\infty}(z)} \},  z\in \SB_{\infty}.$$
Note that the two families of overtwisted disks are disjoint since the two families of Lutz twists
are. Further, there exists a small neighborhood of the exceptional divisor $\sS^3\times\{0\}$ where no deformation is performed. The statement of the Lemma follows.
\end{proof}

\noindent The global construction can be simply achieved:\\

\noindent{\em Proof of Proposition \ref{propdisc_family}.}
Apply Lemma \ref{lem:local_family} to a neighborhood of one exceptional sphere $E_0\in E=\{E_0,E_1,\ldots,E_s\}$. The families of overtwisted disks do not meet $C$ or any $E_j$. Indeed, the two families are arbitrarily close to $E_0$ and the exceptional divisors are pairwise disjoint and none of them intersect the critical curves $C$. Thus, maybe after shrinking the neighborhood $U(E_0)$ in the construction, the families are located away from $U(f)$.\hfill $\Box$\\

\noindent Thus we obtain the families of overtwisted disks required to apply Theorem \ref{thm:eliash}. The vertical deformation is described using a suitable cell decomposition of the base $\CP^1$. The vertical contact condition is ensured progressively above the 0--cells, the 1--cells and the 2--cells.
\subsection{Adapted families}\label{ssec:adapted}
Let $(f,C)$ be an almost contact fibration. A finite set of oriented immersed
connected curves $T$ in $\CP^1$ will be called an adapted family for $(f,C)$ if
it satisfies the following properties:
\begin{itemize}
\item[-] The image of the set of critical values $f(C)$ is part of $T$.
\item[-] Given any element $c\in T$, there exists another element of $c'\in T$ having a non--empty intersection\footnote{In case $c$ has a self--intersection, then $c'=c$ is allowed.} with $c$. Any two elements of $T$ intersect transversally.
\item[-] There exists no triple intersection point between the curves of $T$.
\item[-] The complement $\CP^1\setminus|T|$ is a union of open disks.
\end{itemize}
$|T|\subset\CP^1$ denotes the underlying set of points of the elements of $T$. The elements of an adapted family $T$ that are not in the image of a component of $C$ are referred to as fake components. Let $N\in\N$ be fixed. The insertion of fake curves proves the existence of an adapted family with $\mbox{diam}_{g_0}(\CP^1\setminus|T|)\leq 1/N$, $g_0$ the standard round metric.\\

\begin{figure}[ht]
\includegraphics[scale=0.6]{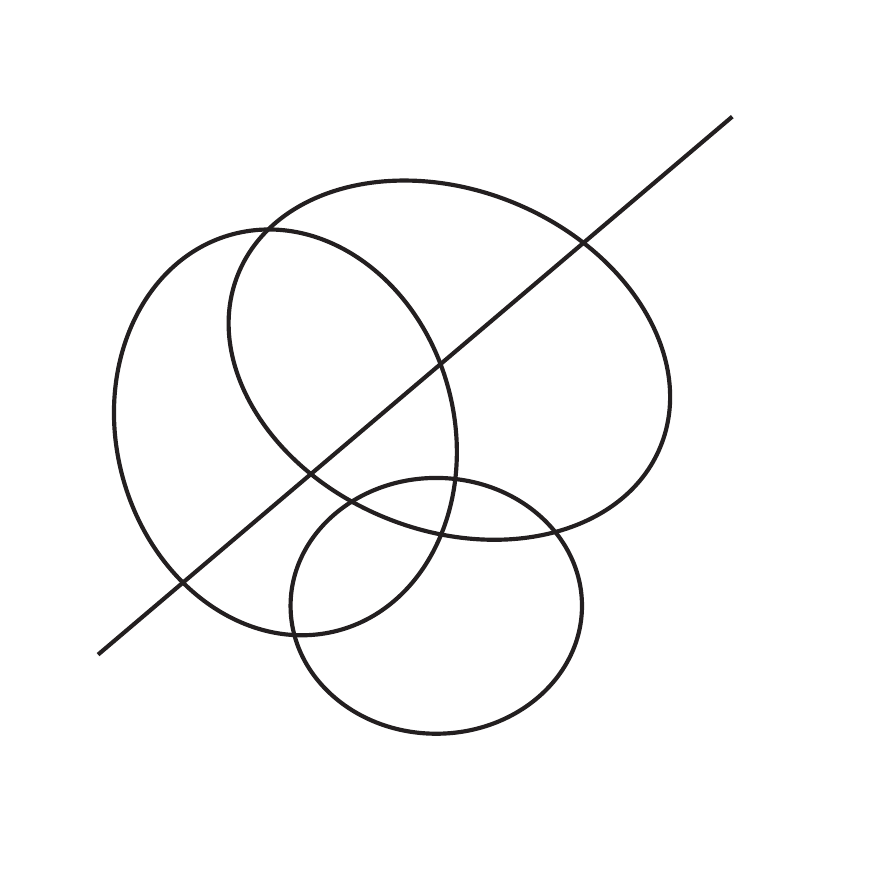}
\caption{Part of an adapted family $T$. The associated
subdivision consists of certain 2--cells with their boundaries being a union of parts of various
elements in the family $T$.}\label{fig:tri0}
\end{figure}

\noindent There is a cell decomposition of $\CP^1$ associated to an adapted family, the $1$--skeleton being $|T|$. See Figure~\ref{fig:tri0}. In order to conclude Theorem \ref{thm:vert_defor} we shall first deform in a neighborhood of each vertex relative to the boundary, proceed with a neighborhood of the $1$--cells and finally obtain the vertical contact condition in the $2$--cells. To be precise in the description of the procedure, we introduce some notation. This is not strictly necessary but it provides the adequate pieces in the framework to apply Eliashberg's result.\\

\noindent Let $L_j\in T$ be a curve, $U(L_j)$ be an open tubular neighborhood and denote
$$\partial \overline{U(L_j)}=L_j^0\cup L_j^1.$$
Suppose that $\bigcup_{j \in J}|L^i_j|$ is isotopic to $|T|$ for both $i=0,1$; this can be achieved by taking a small enough neighborhood of each $L_j$. See Figure \ref{fig:tri3}. We use $V(L_j)$ to denote a slightly larger tubular neighborhood satisfying this same condition. Fix an intersection point $p$ of two elements $L_j,L_k\in T$. Denote by $\SA_{p}$ the connected component of the intersection of $U(L_j)\cap U(L_k)$ containing $p$. Similarly, let $\SV \SA_{p}$ be the connected component of the intersection of $V(L_j)\cap V(L_k)$ that contains $p$, and denote $\SA\SA_p=\SV \SA_{p}\backslash\SA_p$.\\

Consider a small neighborhood $U(T)$ of $|T|$. The open connected components of
$$U(T)\backslash\{\cup\SA_p\}$$
are homeomorphic to rectangles $\SB_i$, $p$ being treated as an index over the intersection points. A suitable indexing for $i$ is also assumed. The third class of pieces constitute the interior of the complement in $\CP^1$ of the open set formed by the union of the sets $\SA_{p}$ and $\SB_i$. Its connected components are denoted $\SC_l$. Thus, neighborhoods of the $0$--cells, $1$--cells and $2$--cells are labeled $\SA_p$, $\SB_i$ and $\SC_l$ respectively. See Figure \ref{fig:tri3}.\\

\begin{figure}[ht]
\includegraphics[scale=0.5]{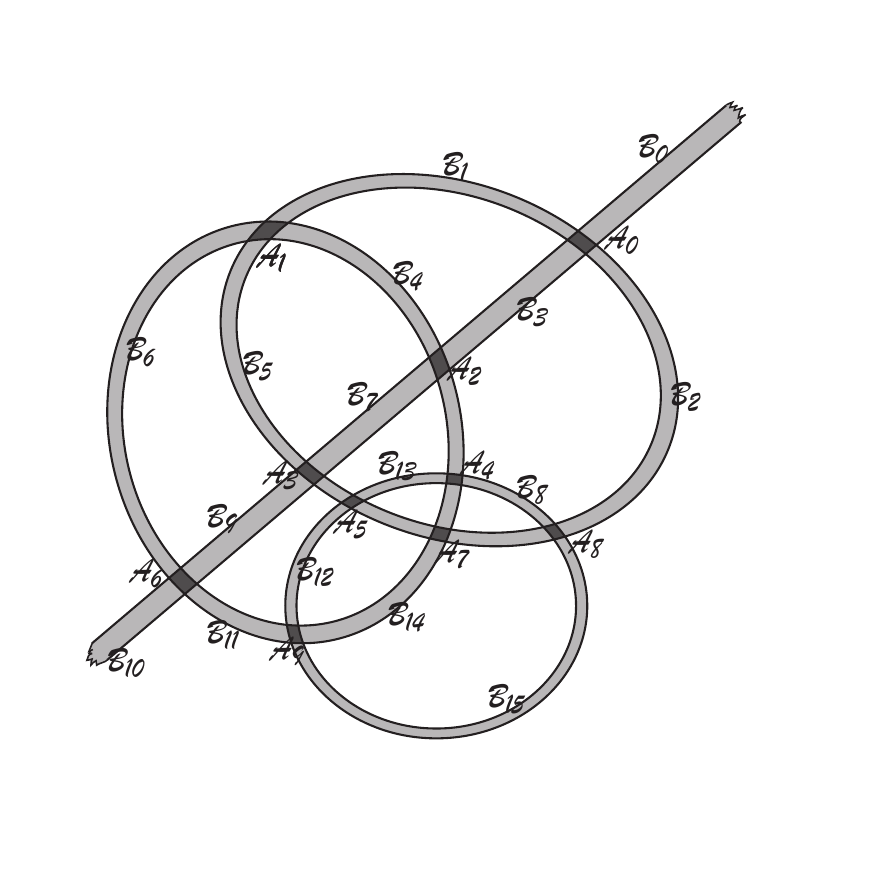}
\caption{The sets $A_p$ and $B_i$ associated to
the subdivision of the figure \ref{fig:tri0}. The sets $A_p$ are drawn in darker grey.}\label{fig:tri3}
\end{figure}

\noindent Finally, we define the sets $\SB\SB_i$. Let $\SB_i$ connect a couple of open sets\footnote{Both sets may be the same for the self--intersecting curves.} of the form $\SA_{p}$. There exists a curve $L_{\SB_i}$ contained in $\SB_i$ which is a part of a curve $L_i\in T$. $L_{\SB_i}$ is part of a $1$--cell in the decomposition associated to the adapted family $T$. Let $L_{\SB_i}^0$ and $L_{\SB_i}^1$ denote the two boundary components of $\overline{\SB}_i$ which are part of the curves $L_i^0$ and $L_i^1$ defined above. Then we declare $\SB \SB_i^0$ (resp. $\SB \SB_i^1$) to be the connected component of $V(L_i)\backslash\SB_j$ containing the boundary curve $L_i^0$ (resp. $L_i^1$). Their union $\SB \SB_i^0\cup\SB \SB_i^1$ will be denoted $\SB \SB_i$. See Figures
\ref{fig:tri4} and \ref{fig:tri7}.\\

\begin{figure}[ht]
\includegraphics[scale=0.4]{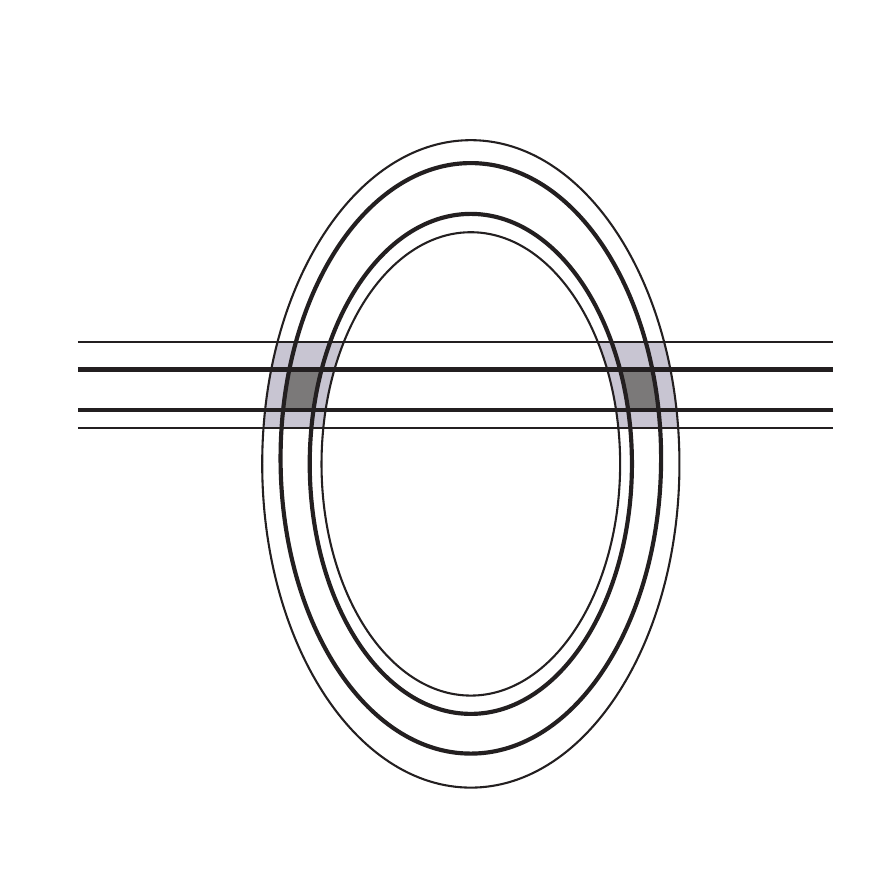}
\caption{Example of two components $\SV \SA_{p}$ and $\SV \SA_{q}$ in light gray, containing $\SA_{p}$ and $\SA_{q}$, in dark gray.}\label{fig:tri4}
\end{figure}

\begin{figure}[ht]
\includegraphics[scale=0.45]{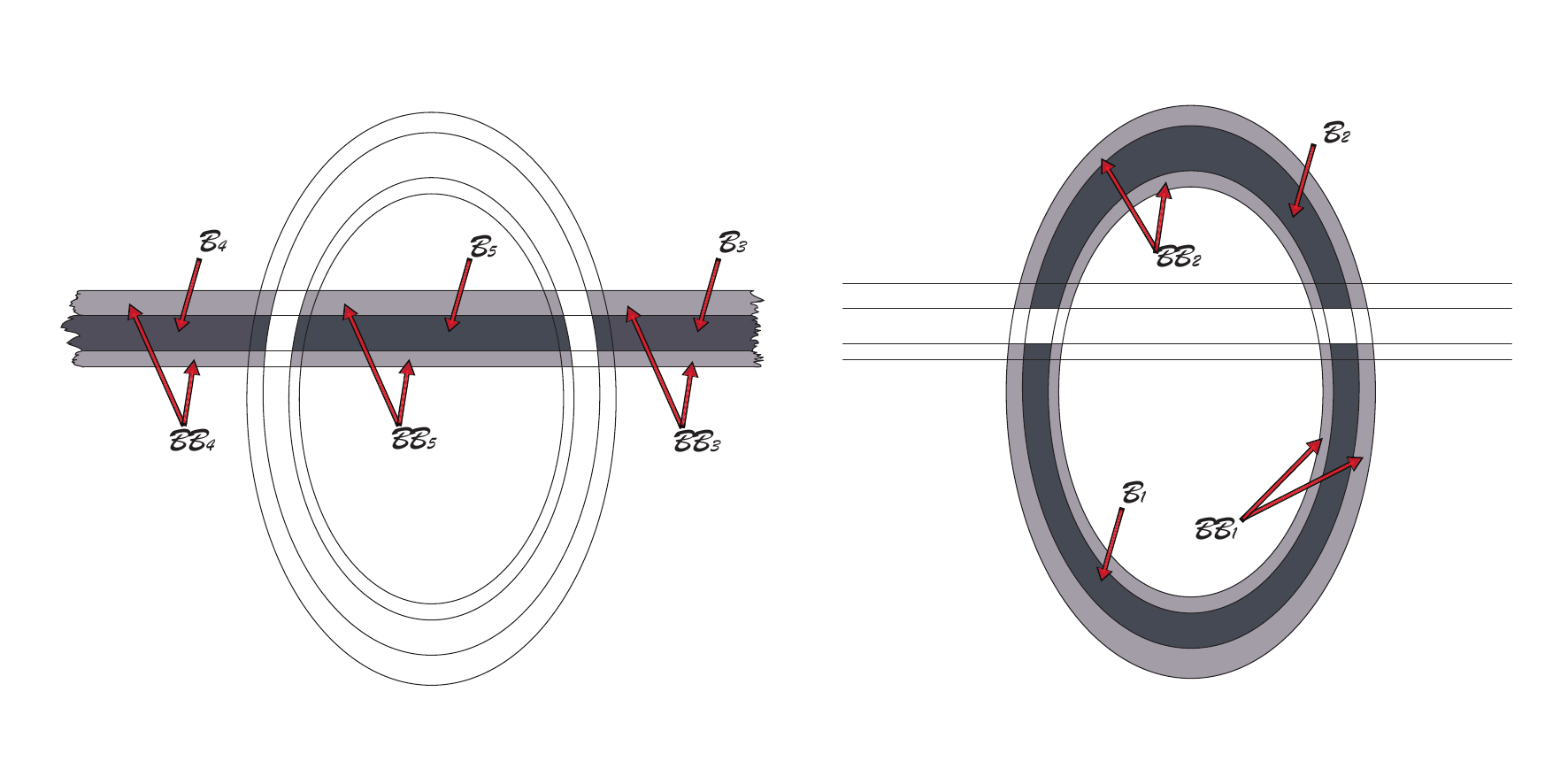}
\caption{Example of the sets $\SB_i$ and $\SB \SB_i$ for the subdivision of Figure
\ref{fig:tri4}.}\label{fig:tri7}
\end{figure}

\subsection{The vertical construction.} In this subsection we prove Theorem \ref{thm:vert_defor}. The following lemma is a simple exercise in differential topology and can be considered as a particular case of Ehresmann's fibration theorem. It will be used in the proof of Theorem \ref{thm:vert_defor}. We include it for completeness.
\begin{lemma} \label{lem:trivializing}
Let $f:E\longrightarrow \D^2$ be a locally trivial smooth fibration over the unit disk with compact
fibres $E_z$, $z\in\D^2$. Decompose $\partial E$ along its corners as $\partial E=f^{-1}(\partial\D^2)\cup \partial_hE$ and suppose that $\partial_hE$ is a smooth closed boundary. Suppose also that there is a collar neighborhood $N$ of $\partial_h E$ and a closed submanifold $S$ such that restricting $f$ to $S$ and $N$ induces locally trivial fibrations. Let $S_0,N_0$ be their fibres over $0\in\D^2$.\\

\noindent Then there exists a diffeomorphism $g: E \longrightarrow E_0 \times\D^2$ making the following diagram commute
$$\xymatrix{
E\ar@{->}[r]^{g}\ar@{->}[d]_\pi & E_0\times\D^2\ar@{->}[d]^{\pi_0}\\
\D^2\ar@{=}[r] & \D^2
}$$
such that $g(N)=N_0 \times\D^2$ and $g(S)=S_0 \times\D^2$.
\end{lemma}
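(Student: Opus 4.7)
The plan is to imitate the standard proof of Ehresmann's fibration theorem, carried out relative to the three nested structures $\partial E\subset N$ and $S\subset E$. The desired diffeomorphism $g$ will be produced by flowing out from the central fibre $E_0$ along suitable horizontal vector fields on $E$. Concretely, I will build two vector fields $\widetilde{X}_1,\widetilde{X}_2$ on $E$ that lift the coordinate fields $\partial_{x_1},\partial_{x_2}$ on $B^2$ and that are simultaneously tangent to $\partial E$, to $S$, and to the slices $\partial E\times\{t\}$ of the collar $N\cong\partial E\times[0,\varepsilon)$. Then the map
$$g^{-1}(q,(x_1,x_2)):=\Phi^{\widetilde{X}_2}_{x_2}\circ\Phi^{\widetilde{X}_1}_{x_1}(q),\qquad q\in E_0,$$
where $\Phi^{X}_t$ denotes the time-$t$ flow of $X$, is well defined: compactness of the fibres together with boundedness of $B^2$ guarantee that the flows exist for all the times needed. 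By construction $g^{-1}$ commutes with the projections to $B^2$ and sends $N_0\times B^2$ onto $N$ and $S_0\times B^2$ onto $S$.

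The key step is the construction of the vector fields with all compatibility conditions at once. The idea is to cover $E$ by charts in which $f$ and each of the subfibrations $f|_S$, $f|_N$, $f|_{\partial E}$ appear as products with open subsets of $B^2$. Near a boundary point this is done by first trivializing $f|_{\partial E}$, extending by the collar coordinate to a local trivialization of $f|_N$, and then flattening $S$ using a compatible chart of $f|_S$ whenever $S$ meets the chart; away from $\partial E$ one uses local trivializations of $f$ that are compatible with $S$. In each such chart the pulled-back vector fields $\partial_{x_1},\partial_{x_2}$ give local lifts tangent to all three submanifolds. Assembling them by a partition of unity $\{\rho_\alpha\}$ subordinated to the cover yields global vector fields $\widetilde{X}_i=\sum_\alpha\rho_\alpha\widetilde{X}_i^\alpha$; projectability to $\partial_{x_i}$ and tangency to a given submanifold are linear conditions, hence preserved under convex combinations.

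The main obstacle is arranging the simultaneous rectification of $\partial E$, $N$ and $S$ in a common system of local trivializations of $f$. This relies exactly on the hypothesis that $f$, $f|_S$, $f|_N$ and $f|_{\partial E}$ are all locally trivial, together with the standard collar parametrization of $N$ to match the $N$-trivialization with the $\partial E$-trivialization. Once these compatible charts are in place, the partition-of-unity construction of $\widetilde{X}_1,\widetilde{X}_2$ and the flow definition of $g$ are routine. Finally, $g$ is a diffeomorphism because $g^{-1}$ has smooth inverse $\Phi^{\widetilde{X}_1}_{-x_1}\circ\Phi^{\widetilde{X}_2}_{-x_2}$ followed by projection to $E_0$, and the preservation of $S$ and $N$ follows immediately from the tangency of the $\widetilde{X}_i$ to those submanifolds.
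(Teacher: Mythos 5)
Your argument is correct, and it is a genuine variant of the one in the paper rather than a reproduction of it. The paper also runs an Ehresmann-type argument, but instead of assembling lifts $\widetilde{X}_1,\widetilde{X}_2$ of $\partial_{x_1},\partial_{x_2}$ via a partition of unity, it fixes a Riemannian metric $g$ on $E$ whose fibrewise orthogonal complement $H_\pi(e)=(T_e F_{\pi(e)})^{\perp g}$ is contained in $TS$ and $T(\partial E)$ along those submanifolds; it then lifts the single radial vector field $\partial_r$ on $B^2\setminus\{0\}$ through this connection and defines the trivialization by $e\mapsto(\phi_{-\|\pi(e)\|}(e),\pi(e))$. The metric approach is shorter because only one flow is needed and the tangency conditions are imposed once and for all through the metric, at the cost of the small subtlety that $\partial_r$ is undefined at the origin (one must check the flow-for-time-$(-\|\pi(e)\|)$ map extends smoothly over $E_0$). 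Your two-vector-field construction with a partition of unity avoids that singularity and makes the local compatibility with $f$, $f|_S$, $f|_N$, $f|_{\partial E}$ explicit; the cost is that the flows of $\widetilde{X}_1$ and $\widetilde{X}_2$ need not commute, so the definition of $g^{-1}$ depends on the chosen ordering, and your description of the inverse should be read as ``recover $\pi(e)=(x_1,x_2)$, then apply $\Phi^{\widetilde{X}_1}_{-x_1}\circ\Phi^{\widetilde{X}_2}_{-x_2}$'' rather than a projection. One small point in your favour: by asking for tangency to the collar slices $\partial E\times\{t\}$ you handle the requirement $g(N)=N_0\times B^2$ explicitly, whereas the paper's proof only imposes the condition along $S$ and $\partial E$ and leaves the preservation of $N$ implicit.
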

\begin{proof}
Let $g$ be Riemannian metric in $E$ such that $(TE_z)^{\perp g} \subset TS$ and $(TE_z)^{\perp g} \subset T(\partial_hE)$, for the points $z$ where the condition can be satisfied. Let $X=\partial_r$ be the radial vector field in $\D^2\backslash\{ 0 \}$ and construct the connection $H_\pi$ associated to the Riemannian fibration:
$$H_{\pi}(e)=(T_e F_{\pi(e)})^{\perp g}.$$
The condition imposed on the Riemannian metric implies that $\partial_hE$ and $S$ are
tangent to the horizontal connection $H_{\pi}$. Let $\widetilde{X}$ be a lift of $X$ through $H_{\pi}$ and $\phi_t(e)$ the flow of this vector field. Define
\begin{eqnarray*}
E & \stackrel{g}{\longrightarrow} & E_0 \times \D^2 \\
e & \longmapsto & (\phi_{(-||\pi(e)||)}(e), \pi(e)).
\end{eqnarray*}
This map satisfies the required properties.
\end{proof}

\noindent {\bf Proof of Theorem \ref{thm:vert_defor}.}
Let $(f,C,E)$ be a good ace fibration and $T$ an adapted family to $(f,C,E)$. Note that a horizontal complement $H$ is defined away from $U(C)$ and provides the splitting specified in (\ref{eq:conn_tubular}). Proposition \ref{propdisc_family} and choose $\SB_0$ and $\SB_\infty$ in the statement such that $\partial\SB_0$ and $\partial\SB_\infty$ are both contained in two different $2$--cells $\SC_0$ and $\SC_\infty$. Lemma \ref{lem:split} implies that this procedure preserves the homotopy class of $(M,\xi,\omega)$.\\

\noindent In order to establish Theorem \ref{thm:vert_defor} we need to perform a deformation which is fixed in a neighborhood of $U(C)$ and leaves the distribution $H$ unchanged, i.e. it should be a strictly vertical deformation.\\

\noindent{\it Deformation at the $0$--cells}: Let $p$ be a vertex with neighborhood $\SA_p$ and
$$\SF=f^{-1}(\SV\SA_p)\setminus(f^{-1}(\SV\SA_p)\cap U(C)).$$
We can assume that $\SV\SA_p$ is small enough and choose a neighborhood $U(C)$ such that the map $f$ restricts to a trivial fibration on $\SF$ and induces a fibration on $\partial\SF$. Consider a trivialization of the former fibration over $\SV\SA_p$. The manifolds with boundary $\SF_z=f^{-1}(z)\backslash (f^{-1}(z)\cap U(C))$ are all diffeomorphic. Let $N_z$ be a collar neighborhood of $\partial\SF_z$ in which the distribution is contact. Given an exceptional divisor $E_i\in E$ denote by $U(E_i)_z$ the intersection of $U(E_i)$ with the fibre $\SF_z$. Applying the trivializing diffeomorphism provided in Lemma \ref{lem:trivializing}, we may assume $\SF_z \times \SV\SA_p \cong\SF$, $U(E_i)_z\times \SV\SA_p \cong U(E_i)$ and $N_z\times \SV\SA_p \cong N$.\\

\noindent Thus we have a manifold with boundary $\SF$ with a family of distributions $\xi_z$ parametrized by the topological disk $\SV\SA_p$ containing $K=\SA_p$. Also a good set $G$ of submanifolds that are already contact for any contact fibre over $\SV\SA_p$. The good set $G$ consists of the union of $N$, $U(E_j)$ and a neighborhood of one of the two overtwisted disks\footnote{These disks are trivialized along with $N$ using Lemma \ref{lem:trivializing}.}. Let us say $p\in\SB_0$ and we choose a neighborhood of $\SD^\infty$. A neighborhood of this set will not be perturbed. The remaining disk $\SD^0$ is contactomorphic to the standard overtwisted disk for each element of the family of distributions. This set--up satisfies the hypotheses of Corollary \ref{coro:eliash}. It should be applied to a smaller parameter space $K$ and then Corollary \ref{coro:eliash2} is used with $\lambda=\emptyset$ to obtain a deformation relative to the boundary. Since we are able to obtain a deformation relative to the boundary we may perform the deformation at each neighborhood of the $0$--cells and extend trivially to the complement of $\SV\SA_p$ in $\CP^1$.\\

\noindent{\it Deformation at the $1$--cells}: Almost the same strategy applied to the $0$--cells applies, although we should not undo the deformation in a neighborhood of the $0$--cells. Corollaries \ref{coro:eliash} and \ref{coro:eliash2} allow us to perform deformations relative to a subfamily, so in this case $\lambda$ will be non--empty. See Figure \ref{fig:Bi}.\\
\begin{figure}[ht]
\includegraphics[scale=0.6]{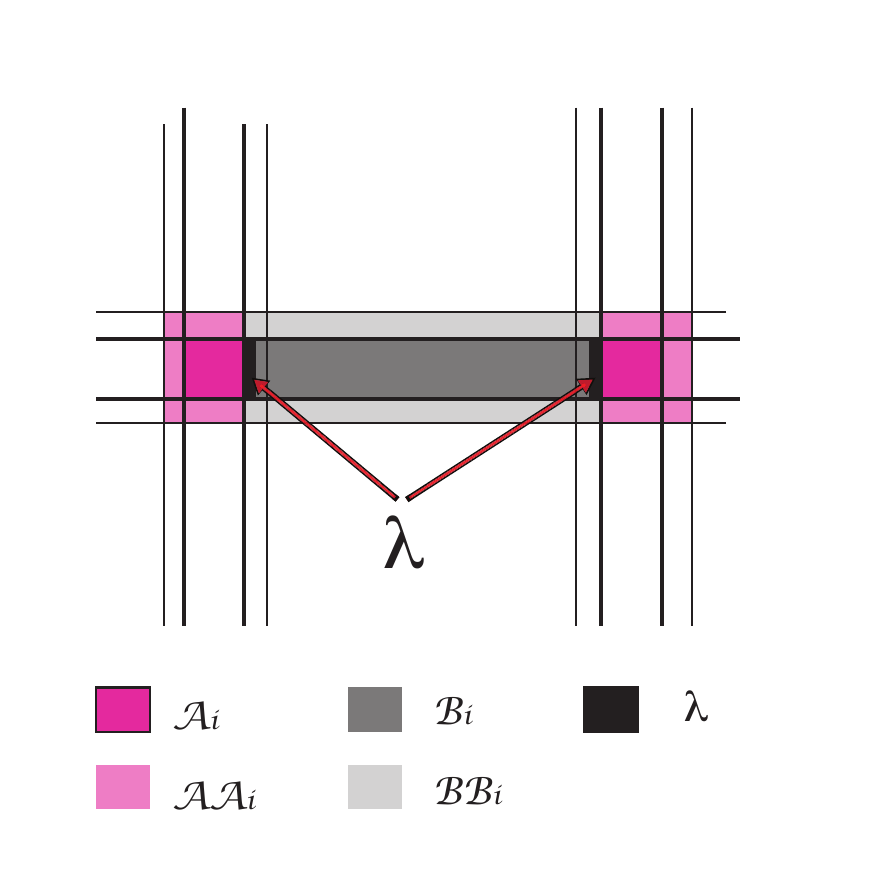}
\caption{The distributions set $\xi_z \subset\SB \SB_i$ with $z\in\lambda$ are already contact distributions.} \label{fig:Bi}
\end{figure}

\noindent{\it Deformation at the $2$--cells}: In this situation Theorem \ref{thm:eliash} also applies after a suitable trivialization of the smooth fibration provided by Lemma \ref{lem:trivializing}. Note that in this case the fibres do not have the boundary contribution of $U(C)$ since its image is not contained in the $2$--cells. The set $L$ is a small tubular neighborhood of the boundary of the $2$--cells. Except at $\SC_0$ and $\SC_\infty$, we may use any of the two families of overtwisted disks to apply the result. Let it be $\SD_z^0$. In the remaining family the distributions are contact and so we include the disks in the set $G$, that also contains $N$ and $U(E_i)$. At $\SC_0$ we use the family $\SD_z^0$, since it is the only one well--defined over the whole set. Proceed analogously at $\SC_\infty$. Note that this argument is possible because the deformation is relative to the boundary. Then Theorem \ref{thm:eliash} applies to the $2$--cells and we extend trivially the deformation. We obtain a vertical contact distribution $(F_z,\widetilde{\xi}_z)$ away from $U(C)$.\\

\noindent In order to conclude the statement of the Theorem, consider the direct sum $\widetilde{\xi}_z\oplus H$ to include the critical set, which has not been deformed. This is the required vertical contact structure. Notice that this construction preserves the almost contact class of the distribution since it is performed homotopically only in the vertical direction. Hence Lemma \ref{lem:split} provides a homotopy on the complement of $U(C)$ relative to the boundary. This yields a homotopy over the manifold $M$.\hfill $\Box$\\
\section{Horizontal Deformation I} \label{sec:skeleton}
Consider an almost contact distribution $(M,\xi,\omega)$ and a good ace fibration $(f,C,E)$ with associated adapted family $T$. Theorem \ref{thm:vert_defor} deforms $\xi$ to a vertical contact structure with respect to $(f,C,E)$. To obtain a honest contact structure the distribution has to be suitably changed in the horizontal direction. As in the previous section, this is achieved in three stages. The content of this Section consists of the first two of these: deformation in the pre--image of a neighborhood of the $0$-- and the $1$--cells of the adapted family $T$. The main result of this Section is the following theorem.

\begin{theorem}
\label{prop:pencil_skeleton}
Let $(M,\xi,\omega)$ be a vertical contact structure with respect to a good ace fibration $(f,C,E)$ and $T$ an adapted family. Then there exists a homotopic deformation $(\xi',\omega')$ of $(\xi,\omega)$ relative to $C$ and $E$ such that $(f,C,E)$ is a good ace fibration for $(\xi',\omega')$, $(\xi',\omega')$ is a vertical contact almost contact structure and $\xi'$ is a contact structure in the pre--image of a neighborhood of $|T|$.
\end{theorem}
\noindent The vertical distribution is fixed along the deformation. In this sense the deformation in the statement is horizontal. The fibration $(f,C,E)$ will not be deformed to prove this fact, just the almost contact structure.\\

\noindent Theorem \ref{prop:pencil_skeleton} follows Proposition \ref{cor:defo_curve} and Lemma \ref{lem:split}. To prove the statement we trivialize the vertical contact fibration over a neighborhood of the $0$--cells. Then the deformation is performed using an explicit local model. The deformation in a neighborhood of the $0$--cells is the content of Proposition \ref{cor:def0}. Then we proceed with the pre--image of a neighborhood of the $1$--cells. This is Proposition \ref{cor:defo_curve}. The same local model is used in both deformations.

\subsection{Local model} The following lemma is used to prove Proposition \ref{cor:def0} and Proposition \ref{cor:defo_curve}. It is a version of results in Section 2.3 of \cite{El} concerning deformations of a family of distributions near the $1$ and $2$--skeleta of a $3$--manifold. The connectedness condition is stated there as the vanishing of a relative fundamental group.

\begin{lemma} \label{lem:contactness_condition}
Let $\left(F, \xi_t\right)$ be a family of contact structures over a compact 3--manifold
$F$ parametrized by $(s,t)\in [-\varepsilon,
\varepsilon] \times [0,1]$ with $\xi_t$ is constant along the $s$--lines and $\alpha_t$ associated contact forms. Consider the projection
$$\xymatrix{F\times [-\varepsilon,
\varepsilon] \times [0,1]\ar@[->][r]^{\qquad\pi}& F\times[0,1],}$$
and the distribution $\xi$ on $F\times[-\varepsilon,
\varepsilon] \times [0,1]$ defined globally by the kernel of the form 
$$\alpha_H(p,s,t) = \alpha_t + H(p,s,t) dt,\qquad H\in C^\infty(F \times [-\varepsilon,
\varepsilon] \times [0,1]).$$
Suppose that $|H(p,s,t)|\leq c \cdot |s|$ and assume that the $1$--form $\alpha_H$ is a contact form in a compact set $G$ such that the intersection of $G$ with any segment $\{p\}\times[-\varepsilon,\varepsilon]\times\{t\}$ is either connected or empty.\\

\noindent Then, there is a small perturbation $\widetilde{H}$ of $H$ relative to $G$ such that $\alpha_{\widetilde{H}}$ defines a contact structure. In precise terms, $|\widetilde{H}-H|\leq 3c\varepsilon$ and $\widetilde{H}|_G=H|_G$.
\end{lemma}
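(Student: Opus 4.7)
The plan is to reduce the contact condition on the $5$--manifold to a pointwise condition on $H$, and then to construct $\widetilde{H}$ by prescribing its $s$--derivative first and integrating. Using $\bd_s\alpha_{(s,t)}=0$, one computes
\[ d\alpha_H \;=\; d_F\alpha_{(s,t)} \,+\, (\bd_t\alpha_{(s,t)} + d_F H)\wedge dt \,+\, \bd_s H\,ds\wedge dt,\]
where $d_F$ denotes fibrewise exterior differentiation on $F$. Expanding $\alpha_H\wedge(d\alpha_H)^2$ and discarding every term whose $F$--part is a $4$--form on the $3$--manifold $F$ or whose parameter part contains $dt\wedge dt$ leaves
\[ \alpha_H\wedge(d\alpha_H)^2 \;=\; 2\,\bd_s H\cdot\bigl(\alpha_{(s,t)}\wedge d_F\alpha_{(s,t)}\bigr)\wedge ds\wedge dt.\]
Since each $\alpha_{(s,t)}$ is a contact form on $F$, the $3$--form $\alpha_{(s,t)}\wedge d_F\alpha_{(s,t)}$ is a nowhere--vanishing volume on $F$, so $\alpha_H$ is a contact form exactly when $\bd_s H$ is nowhere zero.

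Granted this reformulation, the hypothesis that $\alpha_H$ is contact on $G$ reads $\bd_s H>0$ on $G$ (after a reflection $s\mapsto -s$ on those components where the sign is reversed). The fibrewise endpoint assumption lets one further assume, say, that $\pi_s^{-1}(p,t)\cap G=[-\e, s_+(p,t)]$. By continuity of $\bd_s H$ there is an open neighbourhood $G'\supset G$ on which $\bd_s H>0$; pick a smooth cutoff $\chi:F\times[-\e,\e]\times[0,1]\to[0,1]$ with $\chi\equiv 1$ on $G$ and $\mathrm{supp}(\chi)\subset G'$, and a small constant $\delta>0$. Define
\[ \widetilde K := \chi\cdot\bd_s H + (1-\chi)\cdot\delta,\qquad \widetilde H(p,s,t):=H(p,-\e,t)+\int_{-\e}^{s}\widetilde K(p,s',t)\,ds'.\]
Then $\widetilde K$ equals $\bd_s H>0$ on $G$, equals $\delta>0$ outside $G'$, and is a convex combination of two positive quantities in $G'\setminus G$; hence $\bd_s\widetilde H=\widetilde K>0$ everywhere and $\alpha_{\widetilde H}$ is contact globally. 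For $(p,s,t)\in G$ the integration path $\{p\}\times[-\e,s]\times\{t\}$ lies entirely in $G$ by the endpoint hypothesis, so along it $\widetilde K=\bd_s H$ and $\widetilde H=H$; thus $\widetilde H$ coincides with $H$ on $G$.

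It remains to estimate $\widetilde H-H=\int_{-\e}^{s}(1-\chi)(\delta-\bd_s H)\,ds'$. Split the integration range into the ``transition'' piece inside $G'\setminus G$ and the ``outer'' piece inside $\{\chi=0\}$. The transition contributes at most $\rho(\delta+M)$ where $\rho$ is the $s$--thickness of $G'\setminus G$ and $M=\sup_{G'}|\bd_s H|<\infty$ by compactness, while the outer piece evaluates to $\delta\cdot\text{length}-\bigl(H(p,s,t)-H(p,s_1,t)\bigr)$, bounded in absolute value by $2\e\delta+2c\e$ thanks to $|H|\le c\e$. Together, $|\widetilde H-H|\le 2c\e+2\e\delta+\rho(\delta+M)$, which is at most $3c\e$ once $\delta$ and $\rho$ are chosen small. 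The main (mild) obstacle is precisely this simultaneous control---$\rho$ must be small enough to absorb the a priori unknown $\sup|\bd_s H|$ on the transition, $\delta$ small enough to absorb the fixed length $2\e$ of the outer region---together with the routine bookkeeping that produces $\chi$ smoothly and handles by a reflection $s\mapsto -s$ the components of the base on which $G$ abuts the right endpoint of the $s$--interval rather than the left.
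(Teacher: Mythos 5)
Your proposal is correct and follows essentially the same route as the paper: the same computation of $\alpha_H\wedge(d\alpha_H)^2$ reducing the contact condition to $\partial_s H>0$, followed by a perturbation rel $G$ using the connectedness and endpoint hypotheses. Your explicit construction of $\widetilde H$ by cutting off $\partial_s H$ against a small positive constant and integrating from the endpoint contained in $G$ — with the telescoping of $\int\partial_s H$ into boundary values of $H$ to get the $3c\varepsilon$ bound — simply makes precise the perturbation step that the paper only asserts.
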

\begin{proof}
Let us compute the contact condition on $\alpha=\alpha_H$. 
$$d\alpha=  d\alpha_t + dt\wedge\partial_t\alpha_t+ dH \wedge dt\quad\Longrightarrow\quad (d\alpha)^n = \left(d\alpha_t\right)^n + \left(d\alpha_t\right)^{n-1} \wedge dH \wedge dt.$$
Therefore, the contact condition is described as
$$(d\alpha)^n \wedge \alpha = \left(d\alpha_t\right)^{n-1} \wedge \alpha_t  \wedge(
\partial_s H\cdot ds \wedge dt).$$
Thus, the $1$--form $\alpha$ is a contact form if and only if $\partial_s H >0$.\\

\noindent Given $(p,t)\in F\times[0,1]$, $\pi^{-1}(p,t)$ is a $4$--parametric family of $1$--dimensional manifolds. The connectedness of $\pi^{-1}(p,t)\cap G$ and the compactness of $G$ assure that it is possible to perturb $H$ to an $\widetilde{H}$ relative to $G$ and satisfying the contact condition. Indeed, the connectedness condition allows us to perturb the function $H$ on at least one end of the curves in $F\times[-\varepsilon,\varepsilon]\times[0,1]$ and obtain a function $\widetilde{H}$ with $\partial_s\widetilde H>0$.
\end{proof}
\subsection{Contact connections}\label{ssec:contcon} The previous Lemma \ref{lem:contactness_condition} can be used if the contact form has the expression as in the hypotheses of the statement. This is achieved with the choice of an appropriate trivialization obtained by parallel transport. It is convenient to review the notions introduced in \cite{Le}.

\begin{definition} A contact fibration is a smooth fibration $\pi:M\longrightarrow B$ with a co--oriented codimension--1 distribution $\xi\subset TM$ such that the intersection of $\xi$ with any fibre induces a contact structure on that fibre.
\end{definition}

\noindent Consider a contact fibration $(\pi,\xi)$, a $1$--form $\alpha$ such that $\xi=\ker\alpha$ and the vertical bundle $\ker\pi$. A contact fibration has an associated contact connection $H_\xi$. It is defined as the orthogonal of the symplectic subbundle $(\ker\pi\cap\xi,d\alpha|_{\ker\pi\cap\xi})$ in $\xi$ with respect to $d\alpha|_\xi$. Note that the contact connection only depends on the contact structure and not on the choice of the contact form.

\begin{lemma}\label{lem:paralleltrans}
Let $(\pi,\xi)$ be a contact fibration. The parallel transport with respect to a contact connection is by contactomorphisms.
\end{lemma}

\noindent This is a simple computation. See \cite{Le}, \cite{Pr2}. A vertical contact almost contact structure $(M,\xi,\omega)$ with respect to a good ace fibration $(f,C,E)$ is in particular a contact fibration away from the critical locus $C$. Suppose that $\xi=\ker\alpha$ and let $\xi_v=\ker\alpha_v$ be the vertical distribution. The symplectic structure $\omega$ and $d\alpha|_\xi$ both provide a horizontal complement for the vertical distribution $\xi_v$ in $\xi$. These are defined as the annihilators  of the vertical bundles with respect to the $2$--forms $\omega$ and $d\alpha|_\xi$. Let us denote the first one by $H_\omega$ and note that the second one is the contact connection $H_\xi$ introduced above. The distribution $H_\xi$ is not necessarily symplectic for $\omega$. Consider a symplectic structure $\omega_\xi$ for $H_\xi$ coinciding with the symplectic structure $d\alpha|_{H_\xi}$ on a neighborhood of $C$ and $E$. Then $(M,\xi,d\alpha_v\oplus\omega_\xi)$ is a vertical contact almost contact structure for $(f,C,E)$. Lemma \ref{lem:split} implies the following

\begin{lemma}\label{lem:equivcon}
Let $(M,\xi,\omega)$ be a vertical contact almost contact structure with respect to a good ace fibration $(f,C,E)$, $\alpha_v$ such that $\xi_v=\ker\alpha_v$ and $\omega_\xi$ a symplectic structure for the contact connection associated to $(f,\xi)$. Then $(M,\xi,\omega)$ and $(M,\xi,d\alpha_v\oplus\omega_\xi)$ are homotopic almost contact structures.
\end{lemma}

\noindent In order to be able to apply Lemma ~\ref{lem:contactness_condition} we need a deformation of $(M,\xi,\omega)$ such that at least in one direction the parallel transport along the deformed almost contact connection is a contactomorphism. This allows us to trivialize with the almost contact connection and obtain a vertical contact distribution constant along that direction. Thus conforming the hypotheses of Lemma \ref{lem:contactness_condition}. Both Lemmas \ref{lem:paralleltrans} and \ref{lem:equivcon} provide such a construction. The following two subsections provide details.\\
\subsection{Deformation along intersection points.}\label{ssec:defpt}
In this subsection we obtain a contact structure in a neighborhood of the fibres over a neighborhood of the intersection points of an adapted family $T$. The precise statement reads as follows:

\begin{proposition}\label{cor:def0}
Let $(M,\xi,\omega)$ be a vertical contact structure with respect to a good ace fibration $(f,C,E)$ and $T$ an adapted family. Then there exists a deformation $(\xi',\omega')$ of $(\xi,\omega)$ relative to $C$ and $E$ such that $(f,C,E)$ is a good ace fibration for $(\xi',\omega')$ and $\xi'$ is a contact structure in the pre--image of a neighborhood of the 0--cells of $|T|$.
\end{proposition}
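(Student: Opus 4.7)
The plan is to apply Lemma \ref{lem:contactness_condition} in the pre--image of a tubular neighborhood of each 0--cell separately; since 0--cells are isolated points of $|T|$, the local deformations have disjoint supports and global assembly is automatic.

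First, fix a 0--cell $p\in|T|$, a transverse crossing of two curves in $T$. On a slightly enlarged disk $\SV\SA_p\supset\SA_p$, I would apply Lemma \ref{lem:trivializing} to trivialize the smooth fibration $f$ as a product $F_0\times\SV\SA_p$, respecting the pre--images of $C$, of the exceptional divisors $E$, and of $\partial F_0$. Choose coordinates $(s,t)\in[-\varepsilon,\varepsilon]\times[0,1]$ on $\SV\SA_p$. In this trivialization the vertical contact distribution is the kernel of a $1$--form
$$\beta=\alpha_{(s,t)}+H_1\,ds+H_2\,dt,$$
where each $\alpha_{(s,t)}$ is a contact form on $F_0$ by verticality.

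To reduce $\beta$ to the model of Lemma \ref{lem:contactness_condition}, I plan three preparatory moves. First, kill $H_1$ by changing the trivialization: replace $\partial_s$ by $\partial_s+X_{(s,t)}$ for a vertical vector field $X_{(s,t)}$ satisfying $\alpha_{(s,t)}(X_{(s,t)})=-H_1$. Second, homotope the contact form family $\alpha_{(s,t)}$ through vertical contact forms to an $s$--independent family $\alpha_t$; for sufficiently small $\varepsilon$ this stays inside the contact locus. Third, subtract $H_2(\cdot,0,t)\,dt$ and absorb it into $\alpha_t$, so that $H_2|_{s=0}\equiv 0$ and Taylor's theorem gives $|H_2|\leq c|s|$. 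Then define $G$ to be the union of the pre--images of $C$ and $E$ in $f^{-1}(\SV\SA_p)$, together with a thin collar of $\partial\SV\SA_p$ made contact beforehand by a small adjustment supported on $\SV\SA_p\setminus\SA_p$. Lemma \ref{lem:contactness_condition} then produces $\widetilde{H}_2$ with $\partial_s\widetilde{H}_2>0$, so $\alpha_{\widetilde{H}_2}$ is a contact form throughout $f^{-1}(\SA_p)$ and equals $\beta$ on $G$.

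The deformation modifies only the horizontal $dt$--component of $\beta$, hence it is relative to $C$ and $E$, verticality on fibres is preserved, and $(f,C,E)$ remains a good ace fibration of the deformed $(\xi',\omega')$, with $\omega'=d\alpha_{\widetilde{H}_2}$ locally and interpolated to $\omega$ outside by convex combination within the common conformal symplectic class. The hard part will be the connectedness condition of Lemma \ref{lem:contactness_condition}: $G$ must meet every $s$--line in a connected arc touching a single endpoint. This is what the preliminary ``one--sided'' boundary adjustment on $\partial\SV\SA_p$ is meant to achieve, and producing a genuine contact wall on one face of the neighborhood without disturbing the already contact pre--images of $C$ and $E$ is the most delicate step; the other ingredients are routine local perturbations.
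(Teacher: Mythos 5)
Your overall strategy coincides with the paper's: trivialize over a neighborhood of the $0$--cell, reduce the distribution to the normal form $\ker(\alpha_{(s,t)}+H\,dt)$ with $\partial_s\alpha_{(s,t)}=0$, and invoke Lemma \ref{lem:contactness_condition}. However, two of your preparatory steps have genuine gaps. First, you propose to ``homotope the contact form family $\alpha_{(s,t)}$ through vertical contact forms to an $s$--independent family.'' Such a homotopy is not relative to the faces $\{s=\pm\varepsilon\}$ of the parameter domain, nor (in the trivialization of Lemma \ref{lem:trivializing}, which only preserves the submanifolds, not the forms on them) relative to the pre--images of $C$ and $E$; cutting it off reintroduces exactly the $s$--dependence of $\alpha_{(s,t)}$ that Lemma \ref{lem:contactness_condition} forbids, since its computation uses $\partial_s\alpha_{(s,t)}=0$ to kill the term $(d\alpha_{(s,t)})^n\wedge H\,dt$. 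The paper avoids deforming the vertical structure altogether: it first trivializes by the almost contact connection so that the family is constant near $\partial F$ (Lemma \ref{lem:diff_top_sec7}), and then achieves $\partial_s\alpha_{(s,t)}=0$ by a \emph{change of coordinates} built from the fibrewise Gray/Moser contactomorphisms $m_s^t$, paying only with a compactly supported deformation of the horizontal distribution (Lemma \ref{lem:first_def0}). This also keeps the vertical distribution fixed, a property used later.

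Second, your choice of $G$ rests on a misreading of the endpoint condition. You want $G$ to meet \emph{every} $s$--line in an arc containing an endpoint, and therefore try to manufacture a ``contact wall'' on a face of $\partial\SV\SA_p$ --- which is not available at this stage (nothing away from $U(C)\cup U(E)$ is contact yet; producing such a wall is essentially the problem being solved) and, if carried out on a full collar of $\partial\SV\SA_p$, would make $G$ meet interior $s$--lines in \emph{two} components, violating the connectedness hypothesis. Neither is needed: the perturbation argument of Lemma \ref{lem:contactness_condition} is unconstrained on lines disjoint from $G$, so the paper simply takes $G$ to be a collar of $\partial F\times B^2(\varepsilon)$ (the part of the fibres lying in the contact neighborhoods of $C$ and $E$), applies the lemma on an inner rectangle $I^2\subset B^2(\varepsilon/2)$, and interpolates back to the original almost contact structure on the annulus $B^2(\varepsilon)\setminus B^2(\varepsilon/2)$, where the contact condition is allowed to fail because the proposition only claims contactness over a neighborhood of the $0$--cell. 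You should replace your ``contact wall'' by this interpolation on an outer annulus.
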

\begin{proof}
\noindent Let $z$ be a point of intersection of the adapted family $T$, $(\phi,U)$ a sufficiently small chart centered at $z$ with the diffeomorphism $\phi:U\longrightarrow[-1,1]\times[-1,1]$, Cartesian coordinates $(s,t)\in [-1,1]\times [-1,1]$ and $N=f^{-1}(U)\backslash U(f)$. The geometric argument to prove the statement is simple. Lemmas \ref{lem:equivcon} and \ref{lem:paralleltrans} are used to trivialize $f$ over a neighborhood of the $0$--cells such that the hypotheses of Lemma \ref{lem:contactness_condition} can be applied. Let us provide the details.\\

\noindent The map $f:N\longrightarrow U$ is a smooth trivial fibration with fibre $F$. Lemma 6.8 provides an adequate trivializing diffeomorphism $g:N\longrightarrow F \times [-1,1]\times[-1,1]$. Let $(\lambda,\Omega)=(g_*\xi,g_*\omega)$ be the almost contact structure in this local model and
$$f_\lambda=\phi\circ f \circ g^{-1}:F\times[-1,1]\times[-1,1]\longrightarrow [-1,1]\times[-1,1],\quad f_\lambda(p)=(\sigma(p),\tau(p)).$$
\noindent This is a contact fibration for the distribution $\lambda$ and the almost contact structure $(\lambda,\Omega)$ is a contact structure near $g(\partial N\setminus f^{-1}(\partial U))$. Consider the 1--forms $\alpha$ and $\alpha_v$ defining the distributions $\lambda$ and $\lambda_v$. Lemma \ref{lem:equivcon} allows us to deform the symplectic structure $\Omega$ to $d\alpha_v\oplus\Omega_\lambda$ for a suitable choice of symplectic structure $\Omega_\lambda$ in the $d\alpha$--orthogonal of $\lambda_v$ in $\lambda$. Lemma \ref{lem:paralleltrans} implies that the parallel transport along the lift of the vector field $\partial_s$ to the connection $H_\lambda$ consists of contactomorphisms. This provides a specific trivialization such that the contact form satisfies the hypotheses of Lemma \ref{lem:contactness_condition}.\\

\noindent Indeed, consider the connection $H_\lambda$ for the fibration $f_\lambda$ and the vector field $\partial_s$ in the base $[-1,1]\times[-1,1]$. Let $X_s$ be the lift of $\partial_s$ to $H_\lambda$ and $m^\tau_p$ the parallel transport along the segment
$$\gamma:[0,\tau]\longrightarrow[-1,1]\times[-1,1],\quad \gamma(r)=p+(r,0).$$
That is, $m^\tau_p$ is the time--$\tau$ flow of $X_s$. There exists a small $\varepsilon\in\R^+$ such that the flow $m^\tau_p$ is well--defined for all $|\tau|<\varepsilon$ and $p\in\{0\}\times[-1,1]$. This might require a perturbation of the trivializing diffeomorphism $g$ along a neighborhood of the boundary $f_\lambda^{-1}(\partial((-\varepsilon,\varepsilon)\times[-1,1]))$.\\

\noindent In order to obtain the required trivialization consider the diffeomorphism
$$\iota: F\times(-\varepsilon,\varepsilon)\times[-1,1]\longrightarrow F\times(-\varepsilon,\varepsilon)\times[-1,1],\quad p\longmapsto\iota(p)=(m^{-\sigma(p)}_{(0,\tau(p))}(p),f_\lambda(p)).$$
\noindent The lift of the direction $\partial_s$ is part of the trivialized distribution. In precise terms, the push--forward of $\xi$ in $g^{-1}(F\times(-\varepsilon,\varepsilon)\times[-1,1])$ along $\iota\circ g$ is a distribution $(\iota \circ g)_* \xi$ given by the kernel of a $1$--form
$$\alpha_{(s,t)} + H(p,s,t) dt,\mbox{ satisfying }\partial_s\alpha_{(s,t)}=0.$$
Lemma \ref{lem:contactness_condition} can then be applied. The good set $G$ is chosen to be a suitable neighborhood of the trivialization of the boundary $\partial F \times (-\varepsilon,  \varepsilon) \times [-1,1]$. The statement of the Lemma yields a smooth function
$$\widetilde H: F\times(-\varepsilon,\varepsilon)\times[-1,1]\longrightarrow\R$$
inducing a contact structure in this local model.\\

\noindent The previous procedure has to be considered inside the manifold. We should then perform the perturbation relative to the boundary of the base $(-\varepsilon, \varepsilon)\times [-1,1]$. To this aim, consider $\delta\in\R^+$ small enough and a smooth cut--off function $c_{\delta}: [-1,1]\longrightarrow[0,1]$ satisfying
$$c_\delta(x)=1\mbox{ for }|x|\leq \delta,\quad c_\delta(x)=0\mbox{ for }|x|\geq 1-\delta.$$
Then the interpolating function
$$h(p,s,t)= c_\delta(\varepsilon^{-1} s)c_\delta(t) \widetilde H(p,s,t) + (1- c_\delta(\varepsilon^{-1} s)c_\delta(t)) H(p,s,t)$$
induces the form $\alpha=\alpha_{(s,t)} + h(p,s,t) dt$ which coincides with $\alpha_{(s,t)} + H(p,s,t) dt$ near the boundary of $(-\varepsilon,\varepsilon)\times[-1,1]$. The perturbation can thus be made relative to the boundary and inserted in the manifold. The deformation from the initial distribution to that defined by the contact form $\alpha$ satisfies the statement of the Proposition.
\end{proof}

\subsection{Deformation along curves.}

Once we have achieved the contact condition in a neighborhood of the fibres over the $0$--skeleton, we proceed with a neighborhood of the fibres over the $1$--skeleton.\\

\begin{proposition}\label{cor:defo_curve}
Let $(M,\xi,\omega)$ be a vertical contact structure with respect to a good ace fibration $(f,C,E)$, $T$ an adapted family and $\mathbb{T}$ a neighborhood of $T$. Suppose that $(M,\xi)$ is a contact structure on a neighborhood $\mathbb{O}$ of the fibres over the $0$--cells of $T$. Then there exists a deformation $(\xi',\omega')$ of $(\xi,\omega)$ relative to $C$, $E$ and $\mathbb{O}$ such that $(f,C,E)$ is a good ace fibration for $(\xi',\omega')$ and $\xi'$ is a contact structure in the pre--image of $\mathbb{T}$.
\end{proposition}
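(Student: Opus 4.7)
The plan is to reproduce the strategy of Proposition~\ref{cor:def0}, now carried out relative to the already contact region $\mathbb{O}$ over the $0$--cells. I treat one $1$--cell $\gamma\subset T$ at a time, adding its preimage to the contact region before moving to the next. Fix a small tubular neighborhood of $\gamma$ in the base $\CP^1$ and parameterize it as $\gamma\times[-\varepsilon,\varepsilon]$ with coordinate $t\in[0,1]$ along $\gamma$ and $s\in[-\varepsilon,\varepsilon]$ transverse to $\gamma$. The two endpoints $t=0,1$ are $0$--cells, so neighborhoods of the fibres $f^{-1}(\gamma(0))$ and $f^{-1}(\gamma(1))$ lie in $\mathbb{O}$. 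As a first step I apply Lemma~\ref{lem:diff_top_sec7} to trivialize the fibration $f^{-1}(\gamma\times[-\varepsilon,\varepsilon])\to\gamma\times[-\varepsilon,\varepsilon]$ as $\SF\times\gamma\times[-\varepsilon,\varepsilon]$ in such a way that near the endpoints $t=0,1$ it matches the trivialization produced for Proposition~\ref{cor:def0}; since both come from flowing along lifts of a radial vector field through the almost contact connection, after a mild reparameterization the two flows glue consistently.

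\textbf{Execution.} Next I apply a parametric Moser argument in the spirit of Lemma~\ref{lem:first_def0} to arrange that the parallel transport in the $s$--direction acts by contactomorphisms on the vertical fibres. This Moser deformation is carried out relative to $\mathbb{O}$ using a cutoff in the $t$ variable supported in the interior of $[0,1]$, so that the vertical distribution is unchanged over a neighborhood of the $0$--cell endpoints. After this modification the pulled--back distribution has the form $\ker(\alpha_{(s,t)}+H(p,s,t)\,ds)$ with $\partial_s\alpha_{(s,t)}=0$ on the middle of the tube, which is the hypothesis of Lemma~\ref{lem:contactness_condition}. I then apply that lemma with $G$ equal to the union of (i) the preimage of a neighborhood of the endpoints $t\in[0,\delta]\cup[1-\delta,1]$, where $\xi$ is already contact because it is contained in $\mathbb{O}$, and (ii) a collar neighborhood of $\partial\SF$ together with $U(C)$ and $U(E)$, where $\xi$ is contact by the good ace hypothesis. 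For $(p,t)$ with $t$ near $0$ or $1$ the entire $s$--line $\{p\}\times[-\varepsilon,\varepsilon]\times\{t\}$ lies in $G$, so $\partial_s H>0$ already holds; for the remaining $(p,t)$ Lemma~\ref{lem:contactness_condition} delivers $\widetilde{H}$ satisfying the contact condition while coinciding with $H$ on $G$. The resulting contact structure extends trivially to $M$ by $\xi$ through an interpolating annulus transverse to $\gamma$, and iterating over all $1$--cells of $T$ yields $(\xi',\omega')$ with the required properties.

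\textbf{Main obstacle.} The principal delicacy is the compatibility of the successive local trivializations and Moser deformations. The $0$--cell argument already invoked Moser's trick to kill the vertical $s$--dependence inside its own chart, and the new Moser deformation along $\gamma$ must coincide with the earlier one on the overlap; organising the cutoff functions so that the successive homotopies patch into one globally defined homotopy of almost contact structures, and verifying that the accompanying $2$--form $\omega$ is transported along in a compatible way, is the bookkeeping that has to be carried out carefully. Once this is arranged, the resulting structure is still vertical because only the horizontal factor of the symplectic splitting $\xi=\xi_v\oplus\SH$ has been modified, and it agrees with $(\xi,\omega)$ off the preimage of a small tubular neighborhood of $|T|$, hence in particular on $C$, $E$ and $\mathbb{O}$.
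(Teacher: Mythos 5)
Your argument is essentially the paper's own proof: trivialize the tube over each $1$--cell via the almost contact connection, normalize with a parametric Moser argument so that the vertical contact structure becomes constant in the direction transverse to the curve (this is the paper's Lemma \ref{lem:first_def1}), and then apply Lemma \ref{lem:contactness_condition} with a compact set $G$ consisting of the fibres over the $0$--cell endpoints together with the collar of $\partial\SF$ and $U(C)\cup U(E)$. The only difference is cosmetic: the paper bends the deformation arcs $\phi^*\partial_s$ so that each one has exactly one end anchored in the fibres over the $0$--skeleton (Figure \ref{fig:convex}), whereas you keep straight transverse $s$--lines and anchor the perturbation through a $t$--neighborhood of the endpoints; both devices serve to realize the connectedness condition on $G$ that makes the monotone extension of $H$ possible.
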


\noindent Let $\sS$ be a small neighborhood of the set of fibres over $\mathbb{T}\backslash\mathbb{O}$. 
See Figure \ref{fig:def_domain}.
\begin{figure}[h]
\includegraphics[scale=0.6]{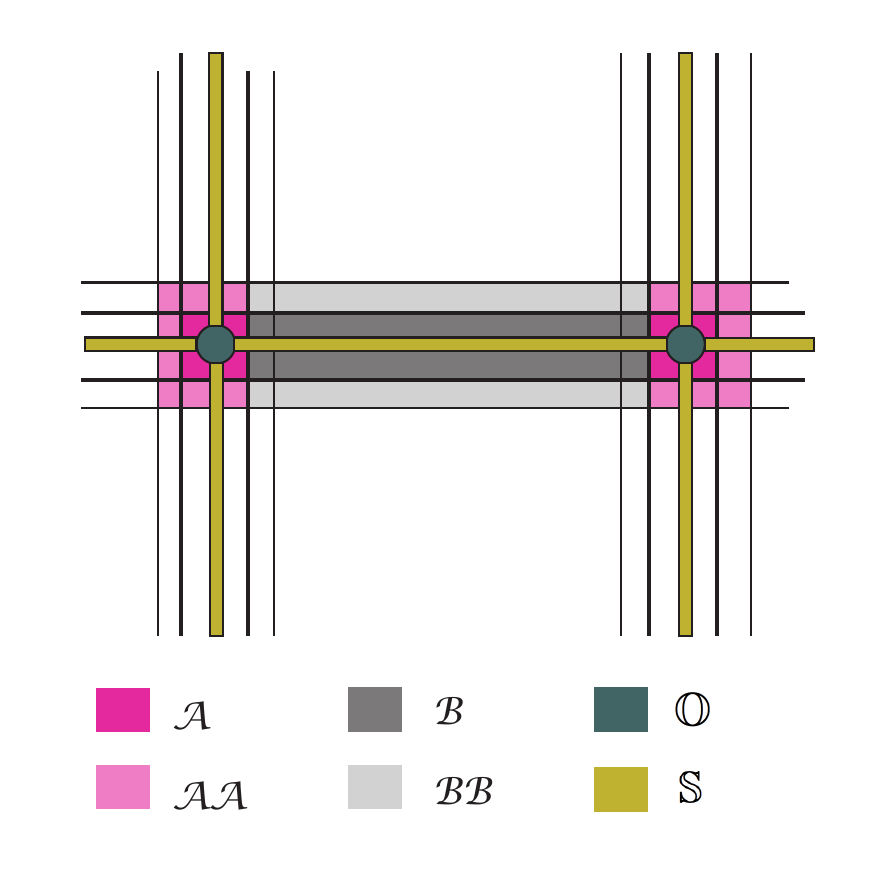}
\caption{The deformation domains.}\label{fig:def_domain}
\end{figure}
The argument applied over $\mathbb{O}$ in the previous subsection works analogously when applied to $\sS$. Thus, no detailed proof is given. The only subtlety lies in the appropriate choice of the compact set $G$ when Lemma~\ref{lem:contactness_condition} is applied.\\

\noindent Let $z,w\in\C\PP^1$ with corresponding neighborhood $\OO_z,\OO_w$; we focus on a line segment $S\subset |T|$ joining these two points. Let $\left(\phi,U\right)$ be a local chart around $S\backslash\left(\OO_z\cup\OO_w\right)$ with cartesian coordinates $(s,t)$ such that
$$\phi(U)=[-\varepsilon,\varepsilon]\times[0,1],\qquad \phi(S)=\{0\}\times[0,1].$$

\begin{lemma} \label{lem:first_def1}
There exist an arbitrarily small neighborhood $\mathbb{S}$ of $S$ and a horizontal deformation of the vertical contact almost
contact structure $(\xi,\omega)$ supported in the pre--image of $\mathbb{S}$, relative to the pre--images of $\mathbb{S}\cap\mathbb{O}_z$ and $\mathbb{S}\cap\mathbb{O}_w$, and conforming the following properties:
\begin{itemize}
\item[-] The deformation is relative to $U(f)$ where $\xi$ is already a contact structure.
\item[-] There exists a local chart $(\phi,U)$ such that the parallel transport of the associated almost contact connection along the vector field $\phi^*\partial_s$ consists of contactomorphisms.
\end{itemize}
\end{lemma}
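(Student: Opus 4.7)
The plan is to adapt the proof of Lemma \ref{lem:first_def0} to the strip geometry $[-\varepsilon,\varepsilon]\times[0,1]$, with the additional requirement that the deformation be relative to the overlap with $\mathbb{O}_z$ and $\mathbb{O}_w$. Invoking the trivialization lemma immediately preceding this statement, I obtain a fibre--preserving diffeomorphism $\tau:f^{-1}(U)\longrightarrow F\times[-\varepsilon,\varepsilon]\times[0,1]$ such that $\tau_*\xi=\xi_v\oplus\xi_h$ with $\xi_v$ vertical contact, and $\tau_*\xi$ constant along the relevant collar $\SC_F$. The key point is that $\tau$ can be arranged to coincide with the trivializations already produced by the proof of Proposition \ref{cor:def0} on $\mathbb{S}\cap\mathbb{O}_z$ and $\mathbb{S}\cap\mathbb{O}_w$, since both are built as flows of the almost contact connection.

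Next I would apply Moser's argument to each $1$--parametric family $\{\xi_{(s,t)}\}_{s\in[-\varepsilon,\varepsilon]}$ at fixed $t$, producing a smooth $2$--parameter family of diffeomorphisms $m_s^t:F\to F$ with $m_0^t=\Id$ and $(m_s^t)_*\xi_{(s,t)}=\xi_{(0,t)}$, supported away from $\SC_F$. On the overlap with $\mathbb{O}_z$ and $\mathbb{O}_w$, the trivialization $\tau_*\xi$ is already constant in $s$ --- this is precisely the parallel-transport property furnished by Proposition \ref{cor:def0}, transcribed through the compatible trivialization --- so Moser's construction returns $m_s^t=\Id$ for $t$ in a neighborhood of $\{0,1\}$. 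The construction is therefore automatically relative to the pre-images of $\mathbb{S}\cap\mathbb{O}_z$ and $\mathbb{S}\cap\mathbb{O}_w$, and similarly it is automatically supported away from $U(f)$, where $\tau$ is built from a contact connection and Moser contributes nothing.

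Following the blueprint of Lemma \ref{lem:first_def0}, I set $\tau_0=\xi_h$ and let $\tau_1$ be the pullback of the horizontal distribution $T([-\varepsilon,\varepsilon]\times[0,1])$ under the fibre--preserving diffeomorphism $J(p,s,t)=(m_s^t(p),s,t)$. Both are horizontal $2$--distributions inside the almost contact $4$--distribution $\tau_*\xi$; since the space of symplectic complements to $\xi_v$ inside $\xi_v\oplus\xi_h$ is contractible (cf.\ Lemma \ref{lem:split}), there is a canonical isotopy $\tau_l$, $l\in[0,1]$, through horizontal distributions. A cut-off function $\chi:\mathbb{S}\to[0,1]$ equal to $1$ on a smaller neighborhood of $S$ and vanishing outside $\mathbb{S}$ yields the deformed distribution $\widetilde\eta(p,s,t)=\xi_v\oplus\tau_{\chi(s,t)}$, extended by $\xi$ elsewhere. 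By construction, the parallel transport of the corresponding almost contact connection along $\partial_s$ is exactly the flow of the Moser family $m_s^t$, hence a contactomorphism, and $\xi_v$ is untouched so the distribution remains vertical contact.

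The main obstacle is ensuring the compatibility of trivializations: one must guarantee that Moser's family $m_s^t$ is the identity on the overlap with $\mathbb{O}_z$, $\mathbb{O}_w$ and in a neighborhood of $U(f)$, while simultaneously agreeing with the trivialization from Proposition \ref{cor:def0}. Both conditions can be met by choosing the collar in the preceding trivialization lemma (and Remark \ref{rmk:lem_diff}) to coincide with the collar over which Proposition \ref{cor:def0} was applied, so that in the overlap region $\tau$ is genuinely built from a contact connection and parallel transport along $\partial_s$ is already by contactomorphisms, forcing Moser's output to be trivial there.
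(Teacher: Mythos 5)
Your proposal is correct and follows essentially the same route the paper intends: the paper's ``proof'' of Lemma~\ref{lem:first_def1} is the one--line remark that it is ``proven with the same methods used in subsection \ref{ssec:defpt},'' i.e.\ by transplanting the Moser--straightening argument of Lemma~\ref{lem:first_def0} from the disk~$D^2$ to the strip $[-\varepsilon,\varepsilon]\times[0,1]$ after trivializing with the preceding (unnamed) variant of Lemma~\ref{lem:diff_top_sec7}. You carry that out and, usefully, make explicit the one new ingredient the paper glosses over --- that over $\mathbb{S}\cap\mathbb{O}_z$ and $\mathbb{S}\cap\mathbb{O}_w$ the distribution is already contact (by Proposition~\ref{cor:def0}), so the almost contact connection is a genuine contact connection there, the pushed--forward family $\xi_{(s,t)}$ is constant in $s$, Moser returns the identity, and hence the deformation is automatically relative to those overlaps; the same mechanism with the collar $\SC_F$ handles relativity to $U(f)$.
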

\begin{figure}[h]
\includegraphics[scale=0.35]{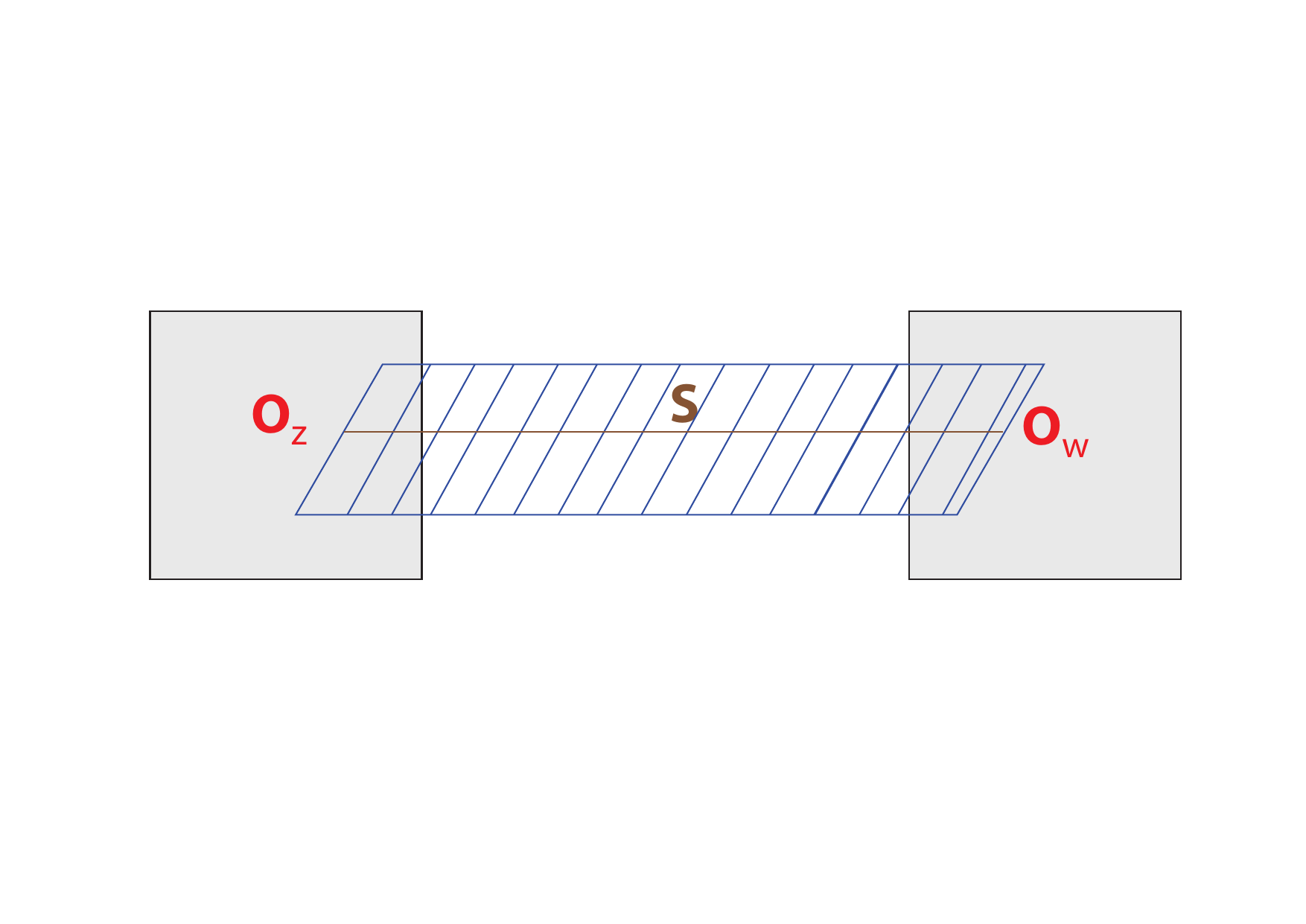}
\caption{The deformation curves $\phi^*\partial_s$.}\label{fig:convex}
\end{figure}
\noindent This follows from subsection \ref{ssec:contcon}.\\

\noindent{\bf Proof of Proposition \ref{cor:defo_curve}.} Use Lemma~\ref{lem:first_def1} to ensure that the parallel transport along the lift of $\partial_s$ is by contactomorphisms. Choose the $s$--coordinate in the neighborhood $\mathbb{S}$ in such a way that the curves which provide the lift of $\phi^*\partial_s$ either have at most one of the ends in the fibres over a small neighborhood of the $0$--skeleton or are contained therein. See Figure \ref{fig:convex}. This allows us to choose a compact set $G$ containing the fibres over the two endpoints plus a neighborhood of the boundary of all the fibres such that the intersection of $G$ with any such arc is connected. There might be the need to progressively shrink the neighborhoods of the fibres over the $0$--skeleton. Apply Lemma~\ref{lem:contactness_condition} to produce a contact structure in a
neighborhood of the fibres over the $1$--skeleton without perturbing the existing contact structure in
a small neighborhood of fibres over the endpoints.$\hfill\Box$

\section{Fibrations over the $2$--disk.}\label{sec:bands}

\noindent Let $(F,\xi_v)$ be a contact $3$--manifold, $\xi_v=\ker\alpha_v$ and $\D^2$ a 2--disk. In this Section we study contact structures on the product manifold $F\times\D^2$. Consider the coordinates $(p,r,\theta)\in F\times\D^2$. The previous sections essentially reduce Theorem \ref{main} to the existence of a contact structure on $F\times\D^2$ restricting to a prescribed contact structure on a neighborhood of the boundary $F\times\partial\D^2$. See Theorem \ref{thm:filling} in Section \ref{sec:end} for details on the end of the proof.\\

\noindent Fix an $\varepsilon\in(0,1)$ and consider $H\in C^\infty(F\times\D^2(1))$ to be a smooth function such that $\partial_r H>0$ for $r\in(1-\varepsilon,1]$. Then the $1$--form
$$\alpha=\alpha_v+H(p,r,\theta)d\theta$$
defines a distribution $\xi=\ker\alpha$. It can be endowed with the symplectic form
$$\omega=d\alpha_v+(1-\tau(r))\cdot rdr\wedge d\theta+\tau(r)dH\wedge d\theta,$$
where $\tau:[0,1]\longrightarrow[0,1]$ is an strictly increasing smooth function such that
$$\tau(x)=0\mbox{ for }x\in [0, 1-\varepsilon]\mbox{ and }\tau(x)=1\mbox{ for }x\in [1-\varepsilon/2,1].$$
Then $(\xi,\omega)$ is an almost contact structure on $F\times\D^2(1)$ which is a contact structure on the neighborhood $F\times(1-\varepsilon/2,1]\times\sS^1$ of the boundary $F\times\partial\D^2(1)$.\\

\noindent The main result in this Section is the following:
\begin{theorem}\label{thm:band}
Let $(F,\xi_v)$ be a contact $3$--manifold with $c_1(\xi_v)=0$, $\xi_v=\ker\alpha_v$ and $L$ a transverse link. Given $\varepsilon\in(0,1)$, consider a function $H\in C^\infty(F\times\D^2(1))$ such that $\partial_rH>0$ in $r\in(1-\varepsilon,1]$ and $H|_{L \times \D^2(1)} \geq 0$, and the almost contact structure
$$(\xi,\omega)=(\ker(\alpha_v+H(p,r,\theta)d\theta),d\alpha_v+(1-\tau(r))\cdot rdr\wedge d\theta+\tau(r)dH\wedge d\theta),$$
where $\tau$ is the function described above.\\

\noindent Then there exists a $1$--parametric family of almost contact structures $\{(\xi_t, \omega_t)\}$, constant along the boundary $F\times\partial\D^2(1)$ and with $(\xi_0, \omega_0)=(\xi,\omega)$ such that:
\begin{enumerate}
\item[a.] $(\xi_1, \omega_1)=(\ker\alpha, d\alpha)$ is a contact structure for some contact form $\alpha$ on $F\times\D^2(1)$.
\item[b.] The submanifold $L\times\D^2(1)$ is a contact submanifold of $(F\times\D^2(1),\xi_1)$ and the induced contact structure is a small neighborhood of a full Lutz twist along $L\times\{0\}$.
\end{enumerate}
\end{theorem}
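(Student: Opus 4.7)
The plan is to construct the target contact form $\alpha$ on $F\times\D^2(1)$ explicitly, verify it satisfies the required properties, and then exhibit a homotopy of almost contact structures connecting $(\xi_0,\omega_0)$ to $(\ker\alpha,d\alpha)$. Since the contact condition for a form of the type $\alpha_v+H\,d\theta$ reduces to $\partial_r H>0$, outside a neighborhood of $L\times\{0\}$ it suffices to modify $H$ so that it becomes strictly $r$-monotone while keeping it fixed near the boundary $r=1$. The additional requirement of a Lutz twist along $L\times\{0\}$ forces a more general form near $L$, changing the vertical factor $\alpha_v$.

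The construction goes as follows. First I would use a tubular neighborhood theorem for the transverse link to choose coordinates $U=L\times\D^2_L(s_0)$ with $\alpha_v|_U=d\phi+s^2\,d\chi$, where $\phi$ parametrizes $L$ and $(s,\chi)$ are polar coordinates on $\D^2_L$. The hypothesis $c_1(\xi_v)=0$ ensures that the framing of $\xi_v$ along $L$ (which controls the Lutz twisting parameter) extends coherently across all components of $L$ and over the fibration, so that the local Lutz model can be inserted globally without obstruction. Next, I would choose a Lutz profile $(f(r),g(r))$: a smooth, nonvanishing pair $[0,1]\to\R^2\setminus\{0\}$ satisfying $(f,g)(r)=(1,H(p,r,\theta))$ for $r$ close to $1$, and executing a full loop around the origin as $r$ decreases to $0$. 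On $U\times\D^2$ define
\begin{equation*}
\alpha=f(r,s)\,d\phi+s^2\,d\chi+g(r)\,d\theta,
\end{equation*}
where $f(r,s)$ interpolates between $f(r)$ at $s=0$ and $1$ at $s=s_0$. On the complement $(F\setminus U)\times\D^2$ set $\alpha=\alpha_v+K(p,r,\theta)\,d\theta$ where $K$ is strictly increasing in $r$, matches $H$ near $r=1$, and matches $g(r)$ at $\partial U$. The restriction to $L\times\D^2(1)=\{s=0\}\times\D^2(1)$ reads $f(r,0)\,d\phi+g(r)\,d\theta$, which is precisely the full Lutz twist model along the transverse knot $L\times\{0\}\subset L\times\D^2$.

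The contact condition for the above $\alpha$ on $U\times\D^2$ expands as a combination of $(fg'-gf')$ coming from the Lutz loop (positive by construction) together with positive multiples of $\partial_s f$ coming from the interpolation in $s$; both can be kept positive by taking the radial interpolation sharp enough. Outside $U$ the contact condition reduces to $\partial_r K>0$. Smoothness at $r=0$ is ensured by choosing the profile so that $f(0,s)\,d\phi+g(0)\,d\theta$ extends across the origin of $\D^2$ in cartesian coordinates, which is the standard requirement for Lutz twist models.

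For the homotopy, I would take a path of nonvanishing $1$-forms $\alpha_t$ connecting $\alpha_0=\alpha_v+H\,d\theta$ to $\alpha_1=\alpha$ (starting with a convex combination and straightening its kernels), set $\xi_t=\ker\alpha_t$, and produce compatible $\omega_t$ via Lemma~\ref{lem:split}: the vertical plane field and the horizontal one each admit homotopies between the initial symplectic splitting of $(\xi_0,\omega_0)$ and the splitting induced by $d\alpha_1$, and the lemma promotes this to a homotopy of symplectic distributions. The main obstacle is the contact condition in the transition annulus $s\in[s_0/2,s_0]$ where the Lutz form $f(r,s)\,d\phi+s^2 d\chi+g(r)\,d\theta$ must glue with $\alpha_v+K(r)\,d\theta$: here the oscillation of $f(r,0)$ must be damped to $f(r,s_0)=1$ while controlling the sign of the $5$-dimensional contact inequality, and simultaneously $g(r)$ must be connected to $K(r)$ without destroying monotonicity. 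This is the delicate core of the argument and is where one carefully designs the profile functions so that the contribution of $\partial_s f$ dominates any sign loss from the angular variation of $(f,g)$.
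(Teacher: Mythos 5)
Your construction away from a tubular neighborhood of $L$ is the form $\alpha_v+K(p,r,\theta)\,d\theta$ with $\partial_r K>0$, matching $H$ near $r=1$ and extending smoothly over $r=0$. But smoothness at $r=0$ forces $K(p,0,\theta)=0$, and then strict $r$--monotonicity gives $K(p,1,\theta)>0$; this cannot match $H$ when $H$ is negative near the boundary. The theorem must cover exactly that case (the paper points out after the statement that $H$ may take both signs on $F\times\partial\D^2(1)$, and explicitly warns that the ``make $H$ monotone'' trick is only an example that works when $H(p,1,\theta)>0$), so your strategy fails on its key step, and the Lutz twist you insert near $L$ does not help globally: it only modifies the form in the small solid torus $U\times\D^2$ and leaves the complement with the unrepairable $\alpha_v+K\,d\theta$ model. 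Relatedly, your reading of the hypothesis $c_1(\xi_v)=0$ is off: a Lutz twist along a transverse link never needs a Chern class hypothesis, so ``extending the Lutz parameter coherently'' is not what $c_1=0$ buys you.

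What the paper does instead, and what you would need, is to exploit $c_1(\xi_v)=0$ to get a \emph{global} framing $\alpha_0,\alpha_1,\alpha_2$ of $T^*F$ and build $\alpha_f=e_0\alpha_0+e_1\alpha_1+e_2\alpha_2-\eta$ on $F\times\sS^2$ (Proposition~\ref{propo:model}/Corollary~\ref{coro:model}). This replaces the rigid $\alpha_v+K\,d\theta$ ansatz everywhere, not just near $L$: near the compactifying fibre the form becomes an $l$--standard contact band $\alpha_v+t\,d\theta$ with $t\in[-l,l]$ (Lemma~\ref{lemma:model}), so that Lemma~\ref{lem:emb} can absorb a boundary function $H$ of arbitrary sign with $\|H\|_{C^0}<l$. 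The Lutz twist along $L\times\D^2(1)$ then comes out for free as the restriction of $\alpha_f$, rather than being inserted by hand. Finally, your homotopy step (``convex combination and straightening its kernels'') also needs justification: the hyperplane fields $\ker\alpha_0$ and $\ker\alpha_1$ are not a priori homotopic rel boundary, and the paper has to prove this by an obstruction-theory computation over a single $2$--cell (Property c. of Proposition~\ref{propo:model}, via Lemma~\ref{lem:2sk} and Lemma~\ref{lem:split}).
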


\noindent In coordinates $(z,r, \theta) \in L\times\D^2(1)$, the contact structure obtained by a full Lutz twist in a neighborhood $\SN(L)\cong L\times\D^2$ of $L$ along $L\times\{0\}$ is described as
\begin{equation*}
\xi_{|L\times\D^2(1)} = \ker (\cos (2\pi r) dz + r \sin (2\pi r) d\theta).
\end{equation*}
\noindent Consider the domain $L \times \D^2(5/4)$ with the previous equation defining the contact structure. The term {\it small neighborhood} of a full Lutz twist refers to an open subset $U\cong L\times\D^2(1)$ such that it can be contact embedded as $L \times \D^2(1) \subset U \subset L \times \D^2(5/4)$. \\

\noindent This theorem is used to conclude Theorem \ref{main} in Section \ref{sec:end}. In brief, it is used to deform the almost contact structure over the $2$--cells of the decomposition associated to an adapted family $T$ of a vertical good ace fibration $(f,C,E)$. In this description of the fibration over the $2$--cells, the part corresponding to the exceptional divisors is the submanifold $L\times\D^2(1)$. Although the deformation in the statement is not relative to a neighborhood of them, the resulting contact structure is described in the part b. of Theorem \ref{thm:band}.\\

\noindent{\bf Example.} Suppose that the function $H\in C^\infty(F\times\D^2(1))$ also satisfies
$$H(p,1, \theta)>0,\mbox{ for all }(p,\theta)\in F\times\sS^1.$$
The contact condition for the initial form $\alpha_v+ H(p,r,\theta)d\theta$ is $\partial_rH>0$. Consider a smooth family $\{H_t\}_{t\in[0,1]}$ of functions in $F\times\D^2(1)$ such that
$$H_0=H,\quad H_1(p,0,\theta)=0,\quad \partial_r H_1>0\mbox{ for }r\in(0,1]\mbox{ and }H_t(p,1,\theta)=H_0(p,1,\theta).$$
Suppose that $H_1$ vanishes quadratically at the origin (this assumption will be implicitly made throughout the article). Then $\alpha_t=\alpha_v+H_t(p,r,\theta)d\theta$ is a family of almost contact distributions constant along the boundary $F\times\partial\D^2(1)$ such that $\ker\alpha_1$ is a contact structure. The corresponding symplectic structures on $\ker\alpha_t$ is readily constructed as in the previous discussion, and an interpolation to the symplectic form $\alpha_v+dH_1\wedge d\theta$ is required to obtain the almost contact structure $(\ker\alpha,d\alpha)$. This contact structure does conform property (a) in Theorem \ref{thm:band}.\\

\noindent The importance of Theorem \ref{thm:band} is that it also covers the case of almost contact distributions where $H$ is negative along a part of $F\times\partial\D^2(1)$. This case is handled at the cost of changing the contact structure on $L\times\D^2(1)$. This region is part of the exceptional locus $E$ and should a priori not be modified, however we will see in Section \ref{sec:end} that the control on this region ensured by Theorem \ref{thm:band} will be enough to correct that change.
\subsection{The model.} 
In this subsection we describe the model used to obtain the contact structure in the statement of Theorem \ref{thm:band}.\\

\noindent Consider the smooth $5$-dimensional manifold $F\times\sS^2$. The submanifolds
$$i_0:F_0=F\times\{(1,0,0)\}\longrightarrow F\times\sS^2\mbox{ and }i_\infty:F_\infty=F\times\{(-1,0,0)\}\longrightarrow F\times\sS^2$$
are referred to as the fibres at zero and infinity. A construction made relative to $F_\infty$ should be thought as construction on $F\times\D^2(1)$ relative to the boundary.\\

\noindent The compact smooth $3$--manifold $F$ is parallelizable. Hence the cotangent bundle $T^*F\longrightarrow F$ is isomorphic to the fibre bundle $F\times\R^3\longrightarrow F$ given by the projection onto the first factor. The canonical symplectic structure in the manifold $T^*F$ induces a contact structure in the manifold $F\times\sS^2$. For instance, given a Riemannian metric the manifold $F\times\sS^2$ can be identified with the unit cotangent bundle $\sS(T^*F)$ with respect to that metric. This is a convex hypersurface in $T^*F$ and the canonical Liouville vector field defines a contact structure $\xi_{can}$ on $\sS(T^*F)\cong F\times\sS^2$. The study of the distribution $\xi_{can}$ has been at the core of contact geometry since its foundations. See \cite{Lu2} and Appendix 4 in \cite{Ar}.\\

\noindent Consider a contact structure $(F,\Xi)$. The choice of a contact form $\alpha$ for $\Xi$ defines an embedding $F\longrightarrow T^*F$. The image of this embedding can be assumed to lie in $\sS(T^*F)$. Then $(F,\Xi)$ is seen as a contact submanifold of $(\sS(T^*F),\xi_{can})$. The symplectic normal bundle of this contact embedding is isomorphic to $\Xi$. In particular the embedding has trivial normal bundle if and only if $c_1(\Xi)=0$. See \cite{Ge3} for an application.\\

\noindent The construction of the contact structure in the following Proposition begins with the natural contact structure in $\sS(T^*F)$ thought of as a contact structure in the total space of $F\times\sS^2\longrightarrow\sS^2$.\\

\noindent In the manifold $\sS^1\times\sS^2$ there exists a unique tight contact structure. It is the contact boundary of the symplectic manifold $\sS^1\times\D^3$. The first Chern class of this tight contact structure is $0\in H^2(\{0\}\times\sS^2,\Z)\cong H^2(\sS^1\times\sS^2,\Z)$. Consider the overtwisted contact structure $\xi_{ot}$ in the homotopy class of plane fields $\{\theta\}\times T\sS^2$. It is obtained by performing half Lutz twist in the tight contact structure along the transverse knot $\sS^1\times\{0\}$. This is said to be the standard $2$--overtwisted structure on $\sS^1\times\sS^2$. Certainly its first Chern class $c_1(\xi_{ot})=2$ coincides with $c_1(T\sS^2)=2$. This homotopy class of plane fields is relevant since $T\sS^2$ is a horizontal bundle for the projection $\sS^1\times\sS^2\longrightarrow\sS^2$.\\

\noindent The basic geometric construction used to prove Theorem \ref{thm:band} is the content of the following result. A minor enhancement of the Proposition is also required, it is explained in Corollary \ref{coro:model}.
\begin{proposition} \label{propmodel}
Let $(F,\xi_v)$ be a contact $3$--manifold with $c_1(\xi_v)=0$, $\xi_v=\ker\alpha_v$ and $L$ a transverse link. Consider the manifold $(F\times\sS^2$, $\omega_{\sS^2})$ the standard area form on $\sS^2$ and the almost contact structure
$$(\xi,\omega)=(\ker\alpha_v, d\alpha_v +\omega_{\sS^2}).$$

\noindent Then there exists a contact structure $\xi_f= \ker \alpha_f$ on $F\times\sS^2$ conforming the properties:
\begin{enumerate}
\item[a.] The contact form $\alpha_f$ restricts to the initial contact form at the fibres $F_0$ and $F_\infty$:
$$i_0^* \alpha_f = \alpha_v\mbox{ and }i_{\infty}^* \alpha_f = \alpha_v.$$
\item[b.] Consider the inclusion $i_L: L \times\sS^2=\bigsqcup (\sS^1 \times \sS^2)\longrightarrow F \times \sS^2$. Then the contact form $i_L^* \alpha_f$ defines the contact structure $\xi_{ot}$ on each $\sS^1 \times \sS^2$.\\
\item[c.] The almost contact structures $(\xi,\omega)$ and $(\ker \alpha_f, d\alpha_f)$ are homotopic relative to $F_\infty$.
\end{enumerate}
\end{proposition}
\begin{proof} This is a rather long proof. It is divided according to the construction and the verification of each of the three properties.\\

\noindent {\bf Construction.} Since $c_1(\xi_v)=0$, there exist a global framing $\{ X_1, X_2 \in \Gamma(\xi_v)\}$ of the contact distribution $\xi_v$. Denote by $X_0$ the Reeb vector field associated to the contact form $\alpha_0=\alpha_v$. Therefore $\{ X_0, X_1, X_2 \}$ is a global framing of $TF$. Let $\{ \alpha_0, \alpha_1, \alpha_2 \}$ be the dual framing. It can be assumed that the transverse link $L$ is an orbit of the Reeb vector field $X_0$. In particular $\alpha_1$ and $\alpha_2$ vanish along $L$. Denote the standard embedding of the $2$--sphere as $e=(e_0, e_1, e_2): \sS^2 \longrightarrow \R^3$. The previous discussion endows the smooth manifold $F\times\sS^2$ with a natural contact structure. We use an explicit model for the argument. It is a computation to verify that
$$\lambda = e_0 \cdot \alpha_0 + e_1 \cdot \alpha_1 + e_2 \cdot \alpha_2 $$
is a contact form on $F\times \sS^2$. The important properties are that $\{ \alpha_0, \alpha_1,\alpha_2 \}$ is a framing and the map $e$ is a star--shaped embedding. The contact structure $\ker\lambda$ is contactomorphic to $\xi_{can}$. From the classical viewpoint it is clear that $\ker\lambda$ is a contact structure. See \cite{Lu2}.\\

\noindent In spherical coordinates $(t,\theta)\in[0,1]\times[0,1]$ the embedding can be described as
\begin{eqnarray}
e_0(t,\theta) & = & \cos (\pi t), \nonumber \\
e_1(t,\theta) & = & \sin (\pi t) \cos (2\pi \theta), \nonumber \\
e_2(t,\theta) & = & \sin (\pi t) \sin(2\pi \theta). \nonumber
\end{eqnarray}

\noindent Note that $F_{\infty}=F \times (-1,0,0)$ and $F_0=F \times (1,0,0)$ are contactomorphic contact submanifolds of $(F\times\sS^2,\ker\lambda)$ with trivial normal bundle. Consider two copies of $F\times\sS^2$, we can perform a contact fibered sum along their $F_{\infty}$ fibres, see \cite{Ge}. This operation is done in order to obtain two fibres with the contact form $\alpha_0$. Those coming from the two zero fibres $F_0$ in the two copies of $F\times\sS^2$. Let us provide an explicit equation for the contact form in this fibered sum.  \\

\noindent A tentative modification of $\lambda$ is obtained by considering the following map
\begin{eqnarray*}
\kappa_0(t,\theta) & = & \cos (2\pi t), \\
\kappa_1(t,\theta) & = & \sin (2\pi t) \cos (2\pi \theta), \\
\kappa_2(t,\theta) & = & |\sin (2\pi t)| \sin(2\pi \theta),
\end{eqnarray*}
and the $1$--form $\kappa_0\cdot\alpha_0 +\kappa_1\cdot\alpha_1+\kappa_2\cdot\alpha_2$. Due to the appearance of the absolute value this form is just continuous. Observe though that in the smooth area it is a contact form. Let us perturb it to a smooth $1$--form.\\

\noindent Define a smooth map $t:[0,1]\longrightarrow[0,1]$ such that:
$$t(0)=0,\mbox{ }t(1/2)=1/2,\mbox{ }t(1)=1, t'(v)>0\mbox{ for }v\in[0,1/2) \cup (1/2,1]\mbox{ and }t^{(k)}(1/2)=0\mbox{ }\forall k\in\N.$$ \\
\noindent This allows us to reparametrize the sphere with coordinates $(v,\theta)\in[0,1]\times[0,1]$. The following map is denoted by $(e_0,e_1,e_2)$ in order to ease notation. This should not lead to confusion since the map formerly referred to as $(e_0,e_1,e_2)$ is not to be considered again. Consider the smooth map
\begin{eqnarray*}
e_0(v,\theta) & = & \cos (2\pi t(v)), \\
e_1(v,\theta) & = & \sin (2\pi t(v)) \cos (2\pi \theta), \\
e_2(v,\theta) & = & |\sin (2\pi t(v))| \sin(2\pi \theta).
\end{eqnarray*}
It is indeed smooth because $t^{(k)}(1/2)=0$. This almost provides the desired $1$--form for the fibre connected sum. Define the smooth function $h(v)= v(1-v) \sin (2\pi v)$ and the $1$--form $\eta=c \cdot h(v)d\theta$, where $c$ is a small positive constant.\\

\noindent{\bf Assertion.} There exists a choice of $c\in\R^+$ such that the $1$--form defined as
\begin{equation}
\alpha_f = e_0 \alpha_0 + e_1 \alpha_1 + e_2  \alpha_2- \eta \label{eq:aT}
\end{equation}
is a contact form over the fibre connected sum of two copies of $F\times\sS^2$ along the fibres $F_\infty$.\\

\noindent This concludes the construction of the contact form in the manifold $F\times\sS^2$ obtained in the Theorem. The contact form $\alpha_f$ also conforms property a. in the statement of the Theorem.\\

\noindent{\bf Proof of Assertion.} Consider the following volume form $\nu = \sin(\pi v)dv \wedge d\theta \wedge \alpha_0 \wedge \alpha_1 \wedge \alpha_2$ on $F \times \sS^2$ and compute the exterior differential
\begin{equation*}
d\alpha_f = de_0 \wedge \alpha_0 + de_1\wedge \alpha_1 + de_2  \wedge \alpha_2  + e_0 d\alpha_0 + e_1 d\alpha_1 + e_2  d\alpha_2 - d\eta.
\end{equation*}
\noindent The contact condition states that $\alpha_f\wedge(d\alpha_f)^2$ is a positive multiple of $\nu$. Let us express it as
\begin{eqnarray*}
\alpha_f \wedge (d\alpha_f)^2 & = &  \eta_1 + c\eta_2 + c\eta_3,
\end{eqnarray*}
where $\eta_1,\eta_2,\eta_3$ are the following $5$--forms:
\begin{eqnarray*}
\eta_1 & = & \left|
\begin{array}{ccc} e_0 & e_1 & e_2 \\ 
\partial_t e_0 & \partial_t e_1 & \partial_t e_2 \\ 
\partial_\theta e_0 & \partial_\theta e_1 & \partial_\theta e_2 \\\end{array} \right| t'(v)^2 dv \wedge d\theta \wedge \alpha_0 \wedge \alpha_1 \wedge \alpha_2 = \\[10pt] & = & 4\pi^2|\sin(2\pi t(v))|(t'(v))^2dv \wedge d\theta \wedge \alpha_0 \wedge \alpha_1 \wedge \alpha_2, \\[10pt]
\eta_2 & = & - e_0^2\cdot h'(v)\cdot\alpha_0 \wedge d\alpha_0 \wedge dv \wedge d\theta, \\[10pt]
\eta_3 & = & -\sum_{i+j\geq1} (e_i\cdot e_j\cdot h'(v))\cdot\alpha_i \wedge d \alpha_j \wedge dv \wedge d\theta + \sum_{i,j} (e_i\cdot h(v))\cdot de_j\wedge d\alpha_i \wedge \alpha_j \wedge d\theta.
\end{eqnarray*}
The indices belong to $i,j\in\{0,1,2\}$. Evaluating at $v=1/2$ we obtain:
\begin{eqnarray*}
\eta_2 (p, 1/2, \theta) & = & \frac{\pi}{2} \alpha_0 \wedge d\alpha_0 \wedge dv \wedge d\theta =  \frac{\pi}{2} dv \wedge d\theta \wedge \alpha_0 \wedge \alpha_1 \wedge \alpha_2,\\
\eta_1 (p, 1/2, \theta)& = & 0,\\
\eta_3 (p, 1/2, \theta)& = & 0.
\end{eqnarray*}

\noindent Therefore, there is a small constant $\delta>0$ such that the $5$--form $\eta_2+\eta_3$ is a positive volume form in the region $F \times [1/2-\delta, 1/2 + \delta] \times [0,1]$. The function $t(v)$ is strictly increasing except at $v=1/2$. Hence, there exists a constant $B>0$ such that $t'(v) >B$ for any $v\in [0, 1/2-\delta] \cup [1/2+\delta, 1]$.\\

\noindent Let us write $\eta_1(p,v,\theta) = g_1(p, v, \theta) \nu$ and $\eta_2 +\eta_3 = g_2(p,v,\theta) \nu$. There exist constants $C,M\in\R^+$ such that $g_1> C>0$ for $v\in [0, 1/2-\delta] \cup [1/2+\delta, 1]$, and $|g_2| \leq M$.\\

\noindent Choose the initial constant $c\in\R^+$ to satisfy $cM \leq C$. Then we obtain the following bound for $v\in [0, 1/2-\delta] \cup [1/2+\delta, 1]$:
$$\alpha_f \wedge (d\alpha_f)^2  =  \eta_1 + c\eta_2 + c\eta_3 = (g_1 +c g_2) \nu> C-cM\geq0.$$
Hence the form $\alpha_f$ is a contact form in this region. The following bound holds in the remaining region $v\in [1/2-\delta,1/2+\delta]$:
$$\alpha_f \wedge (d\alpha_f)^2  =  \eta_1 + c\eta_2 + c\eta_3 = (g_1 +c g_2) \nu> cg_2\geq0.$$
\noindent Thus $\alpha_f$ is a contact form in the fibre connected sum $F\times\sS^2$.\hfill$\Box$\\

\noindent {\bf Property b.} The contact form $\alpha_v$ associated to $\xi_v$ has been chosen such that its Reeb vector field $X_0$ is tangent to the link $L$. Thus $\alpha_1$,$\alpha_2$ vanish on $L$. Restricting the contact form $\alpha_f$ in the equation (\ref{eq:aT}) to the submanifold we obtain
\begin{equation}
i_L^* (\alpha_T)  = \cos(2\pi t(v)) dz - c v(1-v) \sin (2\pi v) d\theta , \label{eq:OT}
\end{equation}
where $(z,v,\theta)\in \sS^1\times\sS^2$. This is an equation of the contact structure $\xi_{ot}$ on each $\sS^1 \times \sS^2$. Indeed, consider $a(v)=\cos(2\pi t(v))$ and $b(v)=v(1-v)\sin(2\pi v)$. Then the curve parametrized by $(a(v),b(v))$ rotates once around the origin and the tangent vector field  $(a'(t), b'(t))$ is transverse to the radial direction, i.e. $\partial_r$, on $(0,1)$.\\

\noindent {\bf Property c.} Let $f_F:F \longrightarrow [0,1]$ be a Morse function on the $3$--manifold $F$ with a single minimum $q\in F$. Then
$$f(p,v, \theta)= f_F(p) - (1+f_F(p))v^2: F\times\sS^2 \longrightarrow [-1,1]$$
is a smooth Morse--Bott function on $F \times \sS^2$ whose non--degenerate critical points belong to the central fibre $F_0$ and has $F_\infty$ as a critical manifold. Let us use the associated cell decomposition relative to the level $f^{-1}((-\infty, -1])= F_{\infty}$. It is generated by the descending manifolds associated to each critical point. It has a unique $2$--cell $\sigma_q^2= \{q \} \times (\sS^2 \backslash \{\infty \})$, corresponding to the critical point $(q,0,0)$.\\

\noindent Note that the resulting almost contact structure and the initial one coincide near $F_\infty$ and we only need to compare them as almost contact structures on the disk relative to the boundary. Due to Lemma \ref{lem:2sk}, a pair of almost contact distributions homotopic over the disk $\sigma_q^2$ relative to its boundary are homotopic on the $5$--manifold $F\times\sS^2$. To conclude Property c. we verify that such relative homotopy exists along $\sigma_q^2$. The almost contact distribution $\xi$ in the statement of the Proposition can be written as $\xi=\ker \alpha_v \oplus T\sS^2$. Its symplectic structure is induced by the symplectic structure on each of the factors. Note that both $\ker\alpha_v$ and $T\sS^2$ are $\rk_\R=2$ symplectic bundles. This is tantamount to $\rk_\R=2$ oriented bundles.\\

\noindent Consider a trajectory $\gamma$ of the Reeb flow through $q$
$$\gamma:(-\varepsilon,\varepsilon)\longrightarrow F,\quad \gamma(0)=q.$$
The submanifold $(V,\xi_{ot})=(\gamma \times \sS^2,\xi_f|_{\gamma \times \sS^2})$ is a contact submanifold of the contact manifold $(F\times\sS^2,\ker\alpha_f)$. A contact from is given by the equation (\ref{eq:OT}). As suggested by the notation, the contact form $\alpha_{ot}=\alpha_f|_V$ defines the overtwisted structure $\xi_{ot}$ on $(-\varepsilon, \varepsilon)\times(\sS^2\setminus\{\infty\})$.\\

\noindent Hence the two subbundles of $TV$
$$\xi_{ot}\longrightarrow\sigma_q^2,\quad T\sS^2\longrightarrow\sigma_q^2$$
are homotopic as oriented subbundles relative to the boundary of the disk. Thus relative homotopic as symplectic bundles. This provides a homotopy in the $2$--dimensional horizontal part. Let us deal with the vertical bundle.\\

\noindent  The initial vertical subbundle is $\xi_v=\ker\alpha_v$, it does satisfy the splitting
$${\xi_v}_{|\sigma_q^2} \oplus TV_{|\sigma_q^2}=T(F\times\sS^2)_{|\sigma_q^2}.$$
The resulting vertical subbundle in the distribution $\xi_f$ can be constructed as the symplectic orthogonal subbundle $\nu_{ot}$ of $\xi_{ot}$. This yields the decomposition
$${\nu_{ot}}_{|\sigma_q^2} \oplus TV_{|\sigma_q^2}=T(F\times\sS^2)_{|\sigma_q^2}.$$
The space of rank--2 oriented vector bundles transverse to the rank--3 vector bundle $TV$ is contractible. Hence ${\nu_{ot}}_{|\sigma_q^2}$ is homotopic to ${\xi_v}_{|\sigma_q^2}$ as rank--2 symplectic distributions.\\

\noindent On the unique 2--cell $\sigma_q^2$ both splittings $\xi=\xi_v \oplus T\sS^2$ and $\xi_f= \nu_{ot} \oplus \xi_{ot}$ hold. Note that the bundle $T\sS^2$ is homotopic to $\xi_{ot}$ inside $TV$ and $\xi_v$ is homotopic to $\nu_{ot}$ through planes transverse to $TV$. Since the subbundles are pairwise homotopic as symplectic distributions and these homotopies do not interact, $\xi$ and $\xi_f$ are also homotopic as symplectic distributions.
\end{proof}

\noindent In the proof of Property c. of Proposition \ref{propmodel} we have only used the 2--skeleton to verify the statement. Lemma \ref{lem:2sk} ensures that this is enough. There is an alternative geometric approach to produce the homotopy. Indeed, the Reeb trajectories of $\alpha_v$ produce a foliation $\SL$ on $F$. This induces a foliation $\SL\times\D^2$ with $3$--dimensional contact leaves. The argument in the proof of Property c. can be made parametric to construct an explicit almost contact homotopy.\\

\noindent The norm of the function $H$ in the statement of Theorem \ref{thm:band} does translate into a geometric feature. This is the size of a certain neighborhood. This is explained in the subsequent subsection. Let us enhance the conclusion of Proposition \ref{propmodel} in order to obtain an arbitrarily large contact neighborhood of a fibre.\\

\noindent{\bf Property d.} Let $R\in\R^+$ be given. There exists a neighborhood $U_\infty$ of the fibre $F_\infty$ and a trivializing diffeomorphism $\psi: F \times \D^2(R)\longrightarrow U_{\infty}$ such that
\begin{itemize}
\item[-] $\psi(F \times \{ 0 \})=F_{\infty}$,
\item[-] $\psi^* \alpha_f = \alpha_v + r^2d \theta$.
\end{itemize}

\noindent This property could have been included in the statement of Proposition \ref{propmodel}. It is stated apart to ease the comprehension.

\begin{corollary} \label{coro:model}
There exists a contact manifold $(F\times\sS^2,\xi_f=\ker\alpha_f)$ conforming a. to d.
\end{corollary}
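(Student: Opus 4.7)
The strategy is to modify the contact form $\alpha_f$ of Proposition~\ref{propo:model} by surgically enlarging its standard Darboux-type neighborhood of $F_\infty$. The contact neighborhood theorem for transverse contact submanifolds with trivial normal bundle provides some $\varepsilon > 0$ and a diffeomorphism $\psi_0 : F \times \D^2(\varepsilon) \longrightarrow V_0 \subset F \times \sS^2$ with $\psi_0(F \times \{0\}) = F_\infty$ and $\psi_0^* \alpha_f = \alpha_v + r^2 d\theta$. Before performing any surgery I would sharpen this to a relative version: a Moser-type argument applied to the nested chain $L \times \{\infty\} \subset F_\infty \subset F \times \sS^2$ of contact submanifolds can be arranged to output a $\psi_0$ additionally sending $L \times \D^2(\varepsilon)$ onto $(L \times \sS^2) \cap V_0$.

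Given the prescribed $R > \varepsilon$, I would replace the small neighborhood $\psi_0(F \times \D^2(\varepsilon))$ by a larger piece assembled from two blocks glued along their boundary circles: an outer disk $F \times \D^2(R)$ carrying the standard form $\alpha_v + r^2 d\theta$, and an interpolation collar $F \times [R, R+1]_s \times \sS^1_\theta$ carrying $\alpha_v + h(s)\,d\theta$ for a smooth monotone function $h$ with $h(R) = R^2$ and $h(R+1) = \varepsilon^2$. The three contact $1$--forms agree at every gluing circle by construction, so the resulting global $1$--form $\alpha_f^R$ is smooth, its contact character on the collar block reducing to $h'(s) > 0$. Topologically, the assembly of $\D^2(R)$ with the collar and with the old $\sS^2 \setminus \D^2(\varepsilon)$ reproduces $\sS^2$, so the new total space is diffeomorphic to $F \times \sS^2$, and Property (d) now holds with $U_\infty = F \times \D^2(R)$ and $\psi$ the inclusion.

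The three remaining properties transfer from Proposition~\ref{propo:model} by locality. Property (a) is immediate since $F_0$ lies at the opposite pole, outside the modified region. For Property (b), the compatibility of $\psi_0$ with $L \times \sS^2$ ensures that $\alpha_f^R$ restricted there equals $dz + r^2 d\theta$ on the large disk and $dz + h(s) d\theta$ on the collar, gluing smoothly to the original overtwisted form on the unchanged complement; each component $\sS^1 \times \sS^2$ still contains the equatorial Lutz twist, so the restriction defines an overtwisted structure, and its almost contact homotopy class agrees with the standard overtwisted one by Property (c) below, whence Eliashberg's classification~\cite{El} delivers the isotopy. Property (c) follows by concatenating the Proposition's homotopy with an interpolation from $R$ to $\varepsilon$ through the one-parameter family of almost contact models $\alpha_v + h_t(s)\,d\theta$; this family is fixed along $F_\infty$ and away from the modified region, so Lemma~\ref{lem:2sk} applied over the single $2$--cell $\sigma_q^2$ of the Proposition concludes.

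The principal obstacle is the relative contact neighborhood theorem used at the outset, which must simultaneously normalize $\alpha_f$ along the nested contact submanifolds $L \times \{\infty\} \subset F_\infty$. I expect this to be handled by a two-stage Moser argument, first normalizing along the transverse link $L$ using its standard transverse model, then extending normally to $F_\infty$ via an interpolation of contact vector fields respecting the previously arranged product structure along $L$.
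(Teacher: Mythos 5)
Your construction breaks down at the interpolation collar, and the failure is exactly the rigidity phenomenon this corollary is designed to circumvent. You correctly identify that the contact condition on $F\times[R,R+1]_s\times\sS^1_\theta$ with form $\alpha_v+h(s)\,d\theta$ reduces to $h'(s)>0$ (with the orientation forced by gluing to the two adjacent blocks, where the coefficient of $d\theta$ increases in the outward direction). But you then prescribe $h(R)=R^2$ and $h(R+1)=\varepsilon^2$ with $R>\varepsilon$, so any monotone $h$ satisfies $h'<0$ throughout the collar and the $1$--form is \emph{not} contact there. In other words, you are trying to squeeze a radius--$R$ standard contact neighborhood of $F_\infty$ into a radius--$\varepsilon$ one by a radial interpolation; this is precisely what cannot be done in the contact category. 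The paper itself flags this: Lemma \ref{lemma:expand} performs exactly your radial reparametrization to enlarge a tubular neighborhood, but only through \emph{almost} contact structures, with the explicit remark that ``the lemma does not hold for a contact structure since the contact condition is violated in the homotopy.'' No choice of cut-off or of where you place the collar repairs this, because the obstruction is the sign of $\partial_r$ of the $d\theta$--coefficient, not its smoothness at the gluing circles.

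The paper's actual mechanism is different in kind: it does not modify $\alpha_f$ near $F_\infty$ at all, but replaces the whole manifold by its pullback under the $k$--fold cyclic branched covering $\phi_k:F\times\CP^1\to F\times\CP^1$, $(p,z)\mapsto(p,z^k)$, branched along the contact fibres $F_0\cup F_\infty$. The pulled-back form $\alpha_f^k=\phi_k^*\alpha_f$ is again contact (one checks this on the explicit formula (\ref{eq:aT})), still satisfies a.--c., and its restriction to the old $\varepsilon$--neighborhood becomes $\alpha_v+kr^2\,d\theta$; the genuine rescaling contactomorphism $r\mapsto r/\sqrt{k}$ then exhibits a standard neighborhood of radius $\sqrt{k}\,\varepsilon\geq R$. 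The point is that the covering actually enlarges the contact manifold (the total ``angular width'' grows with $k$) rather than attempting to fit a larger standard piece inside a fixed one. If you want to salvage a cut-and-paste argument you would need to insert genuinely new contact volume --- which is what the branched cover does --- not reparametrize the existing disk. (A secondary, lesser issue: your verification of Property b.\ invokes Property c.\ together with Eliashberg's classification to get the overtwisted structure on $L\times\sS^2$ only up to isotopy, whereas the paper reads off the standard overtwisted \emph{form} directly from equation (\ref{eq:aTk}); but this would be acceptable if the main construction were sound.)
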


\begin{proof}
The contact structure $(F\times\sS^2,\xi_f=\ker\alpha_f)$ obtained in Proposition \ref{propmodel} does satisfy properties a.-- c. Let us modify it in order to satisfy Property d. The contact neighborhood theorem provides a neighborhood $U_{\infty}$ of the fibre $F_{\infty}$ and a contactomorphism $\psi_\varepsilon: F \times\D^2(\varepsilon) \to U_{\infty}$, for some $\varepsilon\in\R^+$. In case $R\leq \varepsilon$ the statement follows.  \\

\noindent Suppose that $R\geq\varepsilon$, then we use the following covering trick (introduced in \cite{NP}). Let $k\in\N$ be an integer and consider the ramified covering
\begin{eqnarray*}
\phi_k: F \times \sS^2  = F \times \CP^1 & \longrightarrow & F \times  \CP^1 \\
(p,z) & \longmapsto & (p,z^k). 
\end{eqnarray*}
The branch locus consists of the fibres $F_0$ and $F_{\infty}$. Both fibres are contact submanifolds in $(F\times\sS^2,\ker\alpha_f)$ and we can lift the contact form to a contact form $\alpha_f^k=  \phi_k^* \alpha_f$ in the domain of the covering map. Lifting the formula (\ref{eq:aT}), we obtain
\begin{equation}
\alpha_f^k = \cos(2\pi t(v)) \alpha_0 + \sin (2\pi t(v)) \cos (2\pi k\theta) \alpha_1 + |\sin (2\pi t(v))| \sin(2 \pi k\theta) \alpha_2+ k\eta \label{eq:aTk}
\end{equation}
\noindent The reader can verify that properties a.-- c. are still satisfied by the contact structure $\ker\alpha_f^k$. Regarding Property d, observe that $\psi^* \alpha_f^k= \alpha_v +kr^2 d\theta$. Consider the scaling diffeomorphism
\begin{eqnarray*}
g_k: F \times \D^2(\sqrt{k}\cdot\varepsilon) & \longrightarrow & F \times \D^2(\varepsilon) \\
(p, r, \theta) & \longmapsto & (p, r/\sqrt{k}, \theta).
\end{eqnarray*}
Then the trivializing diffeomorphism $\psi_\varepsilon\circ g_k$ satisfies $(\psi_\varepsilon \circ g_k)^* \alpha_f^k = \alpha_v + r^2 d\theta$. Choose $k\in\N$ such that $\sqrt{k}\cdot \varepsilon \geq R$ to conclude the statement.
\end{proof}

\noindent To ease notation, we can refer to the contact structures resulting either of Proposition \ref{propmodel} or Corollary \ref{coro:model} as $\xi_f$. Since the latter has better properties than the former, $\xi_f$ refers to that in Corollary \ref{coro:model}.

\begin{rks}
Suppose that the contact manifold $(F,\xi_v)$ is overtwisted, then the contact structure $\xi_f$ contains a plastikstufe. Confer \cite{Ni},\cite{Pr2}. It can be constructed as follows.\\

\noindent Restrict the contact form $\alpha_f^k$ to $\{ (p, v, \theta) \in F \times \sS^2: v=1/2 \}\cong F\times\sS^1$. This is a contact bundle over the $\sS^1$--factor. The induced contact connection satisfies that $\pi^* \partial_{\theta}= \partial_\theta$ and thus the parallel transport is the identity. In particular, the parallel transport of the overtwisted disk on the fibre generates a plastikstufe.\\

\noindent The contact manifold $F \times\D^2(1/2)$ will be contact embedded in our initial manifold $(M,\xi)$, is $PS$--overtwisted. Note that Section \ref{sec:vertical} forces $(F,\xi_v)$ to be overtwisted contact structures. Hence the contact structures constructed in Theorem \ref{main} are $PS$--overtwisted.
\end{rks}

\subsection{The proof.} In this subsection we conclude the proof of \ref{thm:band}. The essential geometric ideas have been introduced in Proposition \ref{propmodel}. The necessary details to conclude are provided.\\

\noindent Let us introduce a definition. It is given in order to stress the relevance of the size in a neighborhood.
\begin{definition}
Let $(F,\xi_v=\ker \alpha_v)$ be a contact manifold. For $A \in\R^+$, the manifold $F \times [-A,A] \times\sS^1$ with the contact structure $\alpha_A= \alpha_v + td\theta$ is called the $A$--standard contact band associated to $(F,\ker\alpha_v)$.
\end{definition}

\noindent The role of this definition is elucidated in the following lemma.

\begin{lemma} \label{lem:emb}
Let $(F,\xi_F)$ be a contact manifold, $\xi_F=\ker\alpha_F$. Consider a contact manifold $(F \times [0, 1] \times \sS^1,\xi)$ with contact form $\alpha_F + Hd\theta$, $H\in C^\infty(F\times[0,1]\times\sS^1)$.\\

\noindent Suppose that $|H|< A$, for some $A\in\R^+$. Then, there exists a strict contact embedding of $(F \times [0, 1] \times \sS^1,\alpha)$ in the $A$--standard contact band associated to $(F, \alpha_F)$.
\end{lemma}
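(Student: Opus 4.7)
The plan is to exhibit an explicit strict contact embedding using the obvious candidate and then extract from the contact condition on $\alpha=\alpha_F+H\,d\theta$ exactly the monotonicity needed to conclude it is an embedding. Concretely, I would set
$$\Phi:F\times[0,1]\times\sS^1\longrightarrow F\times[-A,A]\times\sS^1,\qquad (p,s,\theta)\longmapsto(p,H(p,s,\theta),\theta).$$
An immediate computation gives $\Phi^*(\alpha_F+t\,d\theta)=\alpha_F+H\,d\theta=\alpha$, so strictness of the contactomorphism is automatic, and the hypothesis $|H|<A$ places the image inside $F\times(-A,A)\times\sS^1$, so the codomain is well defined.

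The content of the lemma is therefore that $\Phi$ is an embedding, and the key point is to extract this from the contact condition on the domain. Writing $\dim F=2n+1$ and expanding $(d\alpha)^{n+1}=(d\alpha_F+dH\wedge d\theta)^{n+1}$, the terms containing two factors of $d\theta$ vanish trivially and the terms containing $(d\alpha_F)^{n+1}$ vanish because this is a $(2n+2)$-form on the $(2n+1)$-dimensional $F$. What survives reduces to
$$\alpha\wedge(d\alpha)^{n+1}=(n+1)\,(\partial_s H)\cdot\alpha_F\wedge(d\alpha_F)^n\wedge ds\wedge d\theta.$$
Since $\alpha_F\wedge(d\alpha_F)^n$ is a volume form on $F$, the contact condition on $\alpha$ is equivalent to $\partial_s H$ being nowhere zero.

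From this the rest is easy. The nowhere-vanishing of $\partial_s H$ makes $s\mapsto H(p,s,\theta)$ strictly monotonic at each $(p,\theta)$, which gives injectivity of $\Phi$: two points with the same image must share $p$, $\theta$ and value of $H$, hence $s$. The same condition makes $d\Phi$ an isomorphism at every point (it is the identity on $TF$ and upper triangular with nonzero diagonal in the $(s,\theta)$ directions), and the implicit function theorem applied to $H(p,s,\theta)=t$ produces a smooth inverse on the image. The one step that deserves real care is the cohomological bookkeeping in the expansion above; all other verifications are mechanical.
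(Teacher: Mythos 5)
Your proposal is correct and takes essentially the same route as the paper, whose entire proof consists of exhibiting the very same map $\Psi_A(p,t,\theta)=(p,H(p,t,\theta),\theta)$ and noting that it pulls back $\alpha_F+t\,d\theta$ to $\alpha_F+H\,d\theta$. Your extra step --- deducing $\partial_s H\neq 0$ from the contact condition on the domain and using it to verify injectivity and immersivity --- correctly fills in details the paper leaves implicit.
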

\begin{proof}
Consider the embedding defined as
\begin{eqnarray*}
\Psi_A: F \times [0, 1] \times \sS^1 & \longrightarrow & F \times [-A,A] \times \sS^1 \\
\left(p, t, \theta\right) & \longrightarrow & \left(p, H(p,t,\theta), \theta\right).
\end{eqnarray*}
This is a diffeomorphism onto its image because the form $\alpha_F+Hd\theta$ is a contact form, or equivalently $\partial_tH>0$.
\end{proof}

\noindent The remaining ingredient for the proof of Theorem \ref{thm:band} is the subsequent lemma.\\

\noindent Let $l\in\R^+$ be a constant, $l>1$. Consider a smooth function $\kappa_l:[0,2l+1]\longrightarrow[0,l]$ with
$$\kappa_l(r)=0\mbox{ for }r\in [0,l],\quad \kappa_l(r)=r-l-1\mbox{ for }r\in [2l,2l+1].$$
Consider $(r,\theta)\in\D^2_l$ to be polar coordinates for the $2$--disk $\D_l^2$ of radius $2l+1$. Suppose that $F$ is a manifold, the subset $F\times\{a\leq r\leq b\}$ of the product $F\times\D^2$ will be denoted $F\times [a,b]\times\sS^1$. Similarly, $F\times(a,b]\times\sS^1$ refers to the subset $F\times\{a<r\leq b\}\times\sS^1$.

\begin{lemma} \label{lemma:model}
Let $(F,\xi_v)$ be a contact $3$--manifold with $c_1(\xi_v)=0$, $\xi_v=\ker\alpha_v$, $l\in(1,\infty)$ and $L$ a transverse link. Consider the standard area $\omega_{\D}$ on the $2$--disk $\D^2_l$ and the almost contact structure on $F\times\D^2_l$ described as
$$(\xi,\omega)=(\ker(\alpha_v+ \kappa_l(r)d\theta), d\alpha_v +\omega_{\D}).$$
Then there exists a contact structure $\xi_1=\ker\alpha_1$ on $F\times\D^2_l$ such that:
\begin{enumerate}
\item[A.] The region $F\times[1,2l+1]\times\sS^1$ is an $l$--standard contact band for $(F,\ker\alpha_v)$:
$$\alpha_1|_{F\times[1,2l+1]\times\sS^1} = \alpha_v + (r-l-1)d\theta.$$

\item[B.] Consider the inclusion $i_L:L\times\D^2_l=\bigsqcup(\sS^1\times\D^2_l) \longrightarrow F \times\D^2_l$. Then the contact form $i_L^* \alpha_f$ defines a small neighborhood of a full Lutz twist on each $\sS^1\times\D^2_l$.\\

\item[C.] $(\xi,\omega)$ and $(\xi_1,d\alpha_1)$ are homotopic relative to the boundary $F\times\partial\D^2_l$.
\end{enumerate}
\end{lemma}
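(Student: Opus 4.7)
The approach is to realize $\alpha_1$ on $F\times\D^2_l$ as the pullback of a contact form on an auxiliary manifold obtained by excising a standard neighborhood of $F_\infty$ in the model $(F\times\sS^2,\xi_f)$ of Corollary~\ref{coro:model} and gluing in the $l$--standard contact band $B=F\times[-l,l]\times\sS^1$. Apply Corollary~\ref{coro:model} with trivializing radius $R>\sqrt l$ in property~(d), so that $\psi^*\alpha_f=\alpha_v+r^2d\theta$ on $F\times\D^2(R)$, and set $M_0:=F\times\sS^2\setminus\psi(F\times\operatorname{int}\D^2(\sqrt l))$. On a collar of $\partial M_0$ the form reads $\alpha_v+r^2d\theta$, equalling $\alpha_v+l\,d\theta$ at the boundary. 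Under the common parameter given by $s=r^2$ on the $M_0$ side and $s=t$ on the band side, the expression $\alpha_v+s\,d\theta$ extends smoothly across the gluing of the $t=l$ end of $B$ to $\partial M_0$, producing a contact manifold $\widetilde M=M_0\cup B$ diffeomorphic to $F\times\D^2$ with global contact form $\widetilde\alpha$.

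Next, pick an orientation-preserving diffeomorphism $\Phi\colon F\times\D^2_l\to\widetilde M$ mapping the outer annulus $F\times[1,2l+1]\subset F\times\D^2_l$ onto $B$ by $(r,\theta)\mapsto(l+1-r,\,-\theta)$, and mapping $F\times\D^2(1)$ onto $M_0$. Then $\alpha_1:=\Phi^*\widetilde\alpha$ satisfies $\alpha_1=\alpha_v+(r-l-1)\,d\theta$ on the outer annulus, since the sign flip $\theta\mapsto-\theta$ cancels the one produced by $t=l+1-r$, yielding Property~(A). Property~(B) follows because the overtwisted disks asserted by Corollary~\ref{coro:model}~(b) on $L\times\sS^2$ live in the equatorial region around $v=1/2$, disjoint from the excised neighborhood of $F_\infty$, so they persist in $L\times M_0$; a further contact isotopy of $L\times\D^2_l$ fixing the outer collar then brings $i_L^*\alpha_1$ to the standard full Lutz twist normal form.

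The main obstacle is Property~(C). On the outermost shell $r\in[2l,2l+1]$ the initial and final structures already coincide. On the band region $r\in[1,2l]$, the linear interpolation
\[
\alpha^u=\alpha_v+\bigl((1-u)\kappa_l(r)+u(r-l-1)\bigr)d\theta,
\]
together with a convex combination of the symplectic factors, yields an almost contact homotopy assembled via Lemma~\ref{lem:split}. On the inner disk $r\in[0,1]$, Corollary~\ref{coro:model}~(c) furnishes a relative homotopy from the product almost contact structure on $F\times\sS^2$ to $(\ker\alpha_f,d\alpha_f)$; pulled back under $\Phi$ and spliced with the direct relative homotopy from the product structure to $(\xi,\omega)|_{F\times\D^2(1)}$ (both of which split canonically), this gives the required homotopy on the inner disk fixed at $r=1$. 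Lemma~\ref{lem:split} then assembles the three pieces into the almost contact homotopy required for Property~(C).
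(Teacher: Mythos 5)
Your construction is essentially the paper's proof: both build the model by trivializing a large neighborhood of $F_\infty$ in the $(F\times\sS^2,\alpha_f)$ of Corollary~\ref{coro:model} (Property~d with radius comparable to $\sqrt l$), deleting $F_\infty$, and gluing in the $l$--standard band through the coordinate change $s=r^2\leftrightarrow s=t$ (the paper uses $x=-r^2$, $\theta\mapsto-\theta$ where you use an external gluing plus the sign flip in $\Phi$ --- same effect), so that Property~A is immediate and Properties~B and~C descend from~b and~c of the Corollary. The paper's treatment of~B and~C is in fact terser than yours (a one-line appeal to the Corollary), so your extra interpolation near the seam $r=1$ --- which as written is not literally fixed there, since the band homotopy moves $\ker\alpha_v$ to $\ker(\alpha_v-l\,d\theta)$ at $r=1$ while the inner homotopy is claimed constant --- is a routine matching issue that goes beyond what the paper itself records.
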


\begin{proof}
Consider Property d. in Proposition \ref{propmodel} and Corollary \ref{coro:model} with radius $R=\sqrt{l}$. Let $(F\times\sS^2,\xi_f=\ker\alpha_f)$ be the contact manifold obtained in Corollary \ref{coro:model}. Then there exists a contact neighborhood $U_\infty$ of the fibre $F_{\infty}$ and a trivializing diffeomorphism
\begin{equation*}
\psi: F\times\D^2(\sqrt{l})\longrightarrow U_\infty\mbox{ such that }\psi^*\alpha_f = \alpha_v + r^2 d\theta.
\end{equation*}
The diffeomorphism $\psi$ also identifies $\psi:F\times(0,\sqrt{l}]\times\sS^1\longrightarrow U_\infty\setminus F_\infty$.\\

\noindent Define the following map
$$m: F\times [-l,0) \times \sS^1\longrightarrow F\times (0,\sqrt{l}] \times \sS^1,\quad m(p,x,\theta)= (p, \sqrt{-x}, -\theta).$$
It satisfies $(\psi\circ m)^* \alpha_f = \alpha_v +rd\theta$. This form extends to the region $F\times [-l,l] \times \sS^1$ with the same expression. Then the manifold $F\times\D^2_l$ is obtained by gluing the annular region $F\times[0,l]\times\sS^1$ to the annular region
$$F\times(0,\sqrt{l}]\times\sS^1\cong F\times[-l,0)\times\sS^1\mbox{ identified via }m,$$
and using the contactomorphism $\psi$ restricted to $F\times(0,\sqrt{l}]\times\sS^1$ to perform the gluing construction in $(F\times\sS^2)\setminus F_\infty$. The construction implies that Property A holds. Properties B and C follow from Properties b and c in Corollary \ref{coro:model} since the manifold $(F\times\sS^2)\setminus F_\infty$ satisfies them.
\end{proof}

\noindent {\bf Proof of Theorem \ref{thm:band}.}
Let $\varepsilon>0$ be a small constant. The function $H$ is $C^0$--bounded on the compact manifold $\F=F\times\D^2(1)$. Let $l\in(1,\infty)$ be an upper bound such that $\|H\|_{C^0}<l-\varepsilon/4$. Consider coordinates $(p,r,\theta)\in\F$ and a smooth function $h\in C^\infty(\F)$ such that
\begin{itemize}
\item[-] $h(p,r,\theta)=0$ for $r\in[0,1-2\varepsilon]$,
\item[-] $h(p,r,\theta)=r-l-(1-\varepsilon)$ for $r\in[1-\varepsilon,1-3\varepsilon/4]$,
\item[-] $\partial_rh>0$ for $r\in[1-3\varepsilon/4,1-\varepsilon/2]$,
\item[-] $h(p,r,\theta)=H(p,r,\theta)$ for $r\in[1-\varepsilon/2,1]$.\\
\end{itemize}

\noindent The almost contact structure $(\xi,\omega)$ is homotopic relative to the boundary to the almost contact structure defined by
$$(\xi_h,\omega_h)=(\ker(\alpha_v+h(p,r,\theta)),d\alpha_v+(1-\tau(r))\cdot rdr\wedge d\theta+\tau(r)dh\wedge d\theta).$$
\noindent The homotopy is provided by a relative homotopy between the functions $h(p,r,\theta)$ and $H(p,r,\theta)$  and Lemma \ref{lem:split}. Hence the departing almost contact structure can be considered to be $(\xi_h,\omega_h)$ .\\

\noindent The neighborhood $F\times(1-\varepsilon,1]\times\sS^1$ of the boundary $F\times\partial\D^2(1)\subset\F$ is a contact manifold. By Lemma \ref{lem:emb}, $F\times(1-\varepsilon,1]\times\sS^1$ contact embeds in an $l$--standard contact band $F\times[-l,l]\times\sS^1$. Denote this embedding by $\phi$. It depends on the Hamiltonian $h\in C^\infty(\F)$ in the interval $(1-\varepsilon,1]$. Observe that $\phi(F\times\{1-\varepsilon\}\times\sS^1)=F\times\{-l\}\times\sS^1$ since $h(p,1-\varepsilon,\theta)=-l$.\\

\noindent Consider the almost contact manifold $(F\times\D^2_l,\xi_1=\ker\alpha_1)$ in the statement of Lemma \ref{lemma:model}. Property A implies the existence of a contactomorphism
$$\iota:F\times[-l,l]\times\sS^1\longrightarrow F\times[1,2l+1]\times\sS^1\subset (F\times\D^2_l,\xi_1),\quad\iota(p,r,\theta)=(p,r+(l+1),\theta)$$
embedding the $l$--standard contact band in a neighborhood of size $2l$ of the boundary of $F\times\D^2_l$. Consider the composition
$$j=\iota\circ\phi:F\times(1-\varepsilon,1]\times\sS^1\longrightarrow F\times\D^2_l.$$
In particular it satisfies $j(F\times\{1-\varepsilon\}\times\sS^1)=F\times\{1\}\times\sS^1\subset F\times\D^2_l$ and embeds a neighborhood of the boundary $F\times\{1-\varepsilon\}\times\sS^1$ via $$j:F\times(1-\varepsilon,1-7\varepsilon/8)\times\sS^1\subset\F\longrightarrow F\times[1,2l+1]\times\sS^1\subset F\times\D^2_l,\quad j(p,r,\theta)=(p,r+\varepsilon,\theta).$$

\noindent The required contact structure in the statement of Theorem \ref{thm:band} is obtained by extending $j$ to the interior of the manifold $F\times\D^2(1-\varepsilon)\subset\F$ and pulling--back the contact structure from $(F\times\D^2_l,\ker\alpha_1)$. Indeed, consider $\widetilde{j}$ a smooth embedding such that
$$\widetilde{j}:F\times\D^2(1)\longrightarrow F\times\D^2_l,\quad\widetilde{j}|_{F\times(\D^2(1)\setminus\D^2(1-\varepsilon))}=j.$$
For instance one can consider the extension to be
$$\widetilde j|_{F\times\D^2(1-\varepsilon)}:F\times\D^2(1-\varepsilon)\longrightarrow F\times\D^2(1),\quad (p,r,\theta)\longmapsto (p,c(r),\theta),$$
where $c:[0,1-\varepsilon]\longrightarrow[0,1]$ is a smooth function such that
\begin{itemize}
\item[-] $c(t)=t$ near $t=0$,
\item[-] $c(t)=t+\varepsilon$ near $t=1-\varepsilon$,
\item[-] $c'(t)>0$ for $t\in[0,1]$.
\end{itemize}
Then $\widetilde{j}^*(\xi_1)$ is the required contact structure. Property B in Lemma \ref{lemma:model} and the fact that the function $H$ is positive in a neighborhood of $L$ imply Property b in the Theorem. \\

\noindent Let us justify that the obtained contact structure is homotopic to the initial almost contact structure relative to the boundary $F\times\partial\D^2(1)$. The homotopy obstruction appears in the $2$--skeleton and therefore it is enough to find the homotopy at a disk $\{ p\} \times \D^2(1) \subset \F$. An analogous computation to the one detailed in the proof of Property c. of Proposition \ref{propmodel} yields the same result. Hence the resulting contact structure $\xi_1$ is homotopic as an almost contact structure to the initial almost contact structure $(\xi,\omega)$ relative to the boundary. \hfill $\Box$

\begin{remark}
The central ingredient in this construction is the existence of a contact structure $\xi$ on $F \times \sS^2$ with the following two properties:
\begin{itemize}
\item[-] It restricts to a given contact structure $(F, \xi_F)$ on a fibre $F \times \{ p \}$,
\item[-] The contact structure $\xi$ is homotopic to the almost contact structure $\xi_F \oplus T\sS^2$.
 \end{itemize}
The use of the space of contact elements space forces the fibre to have vanishing Chern class and part of Section \ref{sec:blowup} is invested to achieve this hypothesis. Since the submission of this article, the articles \cite{BCS,HW} provide a contact structure on $F\times\sS^2$ conforming the above properties. Their use would simplify Subsection \ref{ssec:gace}.\\
\end{remark}


\section{Horizontal Deformation II} \label{sec:end}
The arguments in the previous sections are gathered to conclude the proof of Theorem \ref{main}.
\subsection{Contact Structure in the fibration}
\begin{theorem} \label{thm:filling}
Let $(M,\xi,\omega)$ be an almost contact structure and $(f,C,E)$ a good ace fibration adapted to it. Suppose that $(\xi,\omega)$ is vertical with respect to $(f,C)$ and $T$ is an adapted family such that $\xi$ is a contact structure over a regular neighborhood of $|T|$. Then $(\xi,\omega)$ is homotopic to a contact structure $\xi'$ and the restriction of $\xi'$ to the exceptional $3$--spheres in $E$ induces the homotopically standard overtwisted contact structure.
\end{theorem}

\noindent The standard overtwisted structure is the unique overtwisted contact structure on $\sS^3$ homotopic to the standard contact structure $\xi_{std}$.\\

\noindent A neighborhood of the intersection of an exceptional $3$--sphere with a fibre of $f$ is diffeomorphic to $\sS^1\times\D^2\times\D^2$. Let $(z,r,\theta,\rho,\phi)$ be coordinates for such a neighborhood, the triple $(z,\rho,\phi)$ belong to the fibre. It can be considered as a trivial fibration over the first pair of factors
$$\pi:\sS^1\times\D^2\times\D^2\longrightarrow\sS^1\times\D^2,\quad (z,r,\theta,\rho,\phi)\longmapsto (z,r,\theta).$$
There also exists a contact structure given by the contact form $\alpha=dz+r^2d\theta+\rho^2d\phi$ on the neighborhood. This induces a contact connection $A_{\pi}$ for the fibration $\pi$. Let $\delta\in\R^+$ and suppose the horizontal $2$--disk $(\rho,\phi)\in\D^2(\delta)$ is of radius $\delta$.

\begin{lemma} \label{lem:parallel}
Consider the contact manifold $(\sS^1\times\D^2\times\D^2(\delta),\ker(dz+r^2d\theta+\rho^2d\phi))$, $\pi$ the projection onto the first pair of factors and $A_\pi$ the associated contact connection. The flow of the lift of $\partial_r$ to $A_\pi$ preserves the submanifold $\{(z,r,\theta, \rho, \phi)\in X: \rho=\delta/2 \}.$
\end{lemma}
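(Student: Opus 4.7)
The plan is to exhibit $\partial_r$ itself as the horizontal lift of $\partial_r$ through the contact connection $A_\pi$; once this is done, the conclusion is immediate, because the flow of $\partial_r$ translates only the $r$--coordinate and in particular fixes $\rho$.

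First I would check that $\partial_r$ lies in the contact distribution $\xi$: since $\alpha = dz + r^2\,d\theta + \rho\,d\phi$ has no $dr$--component, $\alpha(\partial_r) = 0$. Also $d\pi(\partial_r) = \partial_r$, so $\partial_r$ is a candidate horizontal lift. To see that it actually sits in $A_\pi$, I would identify the vertical subdistribution $\xi_v = \xi \cap \ker d\pi$ on the region $\rho > 0$, which contains the submanifold of interest. Since $\alpha(\partial_\phi) = \rho \neq 0$ there, one has $\xi_v = \langle \partial_\rho \rangle$. The contact connection is the $\omega$--symplectic orthogonal of $\xi_v$ inside $\xi$, where $\omega = d\alpha = 2r\,dr \wedge d\theta + d\rho \wedge d\phi$. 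A direct computation yields $\omega(\partial_r, \partial_\rho) = 0$, so $\partial_r \in A_\pi$ and is indeed the horizontal lift.

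The flow of $\partial_r$ is then the translation
$$\varphi_t(z, r, \theta, \rho, \phi) = (z, r+t, \theta, \rho, \phi),$$
which preserves every level set of $\rho$ and, in particular, the submanifold $\{\rho = \delta/2\}$.

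There is no real obstacle here; the argument is a sequence of explicit coordinate verifications. The only minor subtlety is the non--uniqueness of the horizontal lift, arising because $\xi_v$ is isotropic so $\xi_v \subset \xi_v^\omega$. The natural lift, with no vertical component, is $\partial_r$, and this is the choice used elsewhere in the paper when performing parallel transport via the contact connection.
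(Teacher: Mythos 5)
Your proof is correct and follows the same route as the paper: one checks that $\partial_r$ lies in $\ker\alpha$ and is $d\alpha$--orthogonal to the vertical directions, so it is its own horizontal lift, and its flow is translation in $r$, which fixes $\rho$. The paper's proof is a terser version of exactly this computation; your added remark on the non-uniqueness of the lift (since $\xi_v$ is isotropic) is a reasonable observation that the paper glosses over.
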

\begin{proof}
The vector field $\partial_r$ belongs to the contact distribution. The vertical directions are generated by $\partial_\rho,\partial_\phi$ and the symplectic form pairs them via $\rho\cdot d\rho\wedge d\phi$. Hence $\partial_r$ is itself the lift to $A_\pi$. The statement follows.
\end{proof}

\noindent {\bf Proof of Theorem \ref{thm:filling}.} The complement of a regular neighborhood of $|T|$ in $\CP^1$ is a disjoint collection $\{B_1,\ldots,B_a\}$ of $2$--disks. The distribution $\xi$ is a contact structure in the fibres of $f$ close to the boundary of $B_1\cup\ldots\cup B_a$. The restriction of $f$ to the preimages of each $\mathcal{B}\in\{B_i\}$ is a smooth fibration since the critical values of $f$ lie in the complement of the set $B_1\cup\ldots\cup B_a$. In order to conclude the statement of the Theorem we produce a deformation over each ball $\mathcal{B}$ supported away from the boundary and resulting in a contact structure.\\

\noindent The proof of the statement now uses the results in Section \ref{sec:bands}. Let us precise the necessary details regarding the trivializations. Choose a ball $\mathcal{B}\in\{B_1,\ldots,B_a\}$ and a local chart $\varphi:\mathcal{B}\longrightarrow B^2(1)$. Consider the map $g=\varphi\circ f: f^{-1}(\mathcal{B}) \longrightarrow B^2(1)$. For $\varepsilon>0$ a small constant, we may assume that $g^{-1}(B^2(1)\backslash B^2 (1-\varepsilon))$ is an open set where the distribution $\xi$ is a contact structure.\\

\noindent Consider an exceptional divisor $E$. According to the local model used in Section \ref{sec:blowup}, there exists a neighborhood $\mathcal{E}$ of $E$ and a contactomorphism
$$\varphi_E: (\sS^3 \times\D^2(\delta),\alpha_{std}+ \rho^2 d\phi) \longrightarrow\mathcal{E}.$$
The composition $f \circ \varphi_E:  \sS^3 \times\D^2(\delta) \longrightarrow \sS^2$ restricts to the Hopf fibration at $\sS^3\times\{0\}$. Restricting to the region $f^{-1}(\mathcal{B})\cap\mathcal{E}$ we obtain a fibration
$$\varphi\circ f\circ\varphi_E:\sS^1\times B^2(1) \times\D^2(\delta) \longrightarrow B^2(1)$$
over the 2--ball. Lemma \ref{lem:parallel} implies that the contact parallel transport along the neighborhoods of the boundary is tangent to it. Lemma \ref{lem:paralleltrans} allows us to radially trivialize and express the contact structure as
$$\xi=\ker (\alpha_v + H d\theta).$$
\noindent Observe that the contact fibration is a contact structure in the neighborhood $\mathcal{E}$, therefore $\partial_r H\geq 0$ is satisfied on $\mathcal{E}$. Since $H(p,0,0)=0$, we also conclude that $H\geq 0$ over $\mathcal{E}$. \\

\noindent This setup satisfies the hypotheses of Theorem \ref{thm:band}. It applies producing a homotopy $\xi_t$ of almost contact structures over $f^{-1}(\mathcal{B})$ relative to its boundary such that $\xi_0=\xi$ and $\xi_1$ is a contact structure. The exceptional divisors are contact submanifolds of $\xi_1$ and their induced contact structure is the standard contact structure $\xi_{std}$ with a full Lutz twist performed. The construction is made relative to the pre--image of a neighborhood of the boundary of the ball $\mathcal{B}$. The argument successively applies to the elements of $\{B_1,\ldots,B_a\}$. This concludes the statement.\hfill $\Box$

\subsection{Interpolation at the exceptional divisors.}
Let $(M,\xi,\omega)$ be an almost contact manifold. The argument for proving Theorem \ref{main} begins with a good almost contact pencil $(f,C,E)$. Section \ref{sec:blowup} provides a good ace fibration in a modified manifold $(\widetilde M,\widetilde\xi,\widetilde\omega)$. The results in Sections \ref{sec:vertical}, \ref{sec:skeleton} and \ref{sec:bands} confer good ace fibrations. These exist not on the manifold $(M,\xi,\omega)$ but in $(\widetilde M,\widetilde\xi,\widetilde\omega)$. In the previous subsection a contact structure has been obtained in the almost contact manifold $(\widetilde M,\widetilde\xi,\widetilde\omega)$ such that a neighborhood of the exceptional spheres has remained contact. It is left to obtain a contact structure in the initial manifold $M$.\\

\noindent The exceptional spheres in $(\widetilde{M},\widetilde{\xi})$ have the standard tight contact structure $(\sS^3,\xi_{std})$ at the beginning of the argument. In the deformation performed in Section \ref{sec:bands} the exceptional spheres become overtwisted and we cannot directly obtain a contact structure on $M$. This has a simple solution, we deform the contact distribution on a neighborhood of the exceptional spheres to the standard one. This is the content of the following

\begin{theorem} \label{thm:dishant}
Let $(\sS^3 \times B^2(4),\xi_0)$ have the contact form
\begin{equation}
\eta= \alpha_{ot} + \delta\cdot r^2 d\theta, \label{eq:ot_ne}
\end{equation}
where $\delta\in\R^+$ is a constant and $\alpha_{ot}$ is any contact form associated to an overtwisted contact structure homotopic to the standard contact structure on $\sS^3$.\\

\noindent Let $\xi_{std}$ be a tight contact structure on $\sS^3$. Then there exists a deformation $\xi_1$ of $\xi_0$ supported in $\sS^3 \times B^2(3)$ such that the $\xi_1$ is a contact structure and $\sS^3 \times \{ 0 \}$ inherits the contact structure $\xi_{std}$.
\end{theorem}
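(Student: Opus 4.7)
The plan is to apply Gromov's $h$-principle for contact structures on open manifolds (\cite{EM}, 10.3.2) in its relative form. First I would construct, by direct radial interpolation, an almost contact structure on $\sS^3\times B^2(4)$ that coincides with $\xi_0$ outside $\sS^3\times B^2(3)$ and is already contact in a neighborhood of the central fibre $\sS^3\times\{0\}$, where it restricts to the standard tight plane field. The $h$-principle is then used to promote it to a genuine contact structure without disturbing these two regions.

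Since $\ker\alpha_{ot}$ and $\ker\alpha_{std}$ lie in the same homotopy class of cooriented plane fields on $\sS^3$, I can choose a smooth family $\{\beta_r\}_{r\in[0,3]}$ of nowhere-vanishing $1$-forms on $\sS^3$ with $\beta_r=\alpha_{std}$ for $r\in[0,1/4]$ and $\beta_r=\alpha_{ot}$ for $r\in[11/4,3]$. Extending $\beta_r=\alpha_{ot}$ for $r\in[3,4]$, the $4$-plane distribution
\begin{equation*}
\xi_1' \;=\; \ker(\beta_r+\delta r^{2}d\theta)
\end{equation*}
agrees with $\xi_0$ on $\sS^3\times(B^2(4)\setminus B^2(11/4))$ and satisfies $\xi_1'\cap T\sS^3=\ker\alpha_{std}$ at the central fibre. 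A direct computation analogous to the contact neighborhood theorem for transverse contact submanifolds shows that $\alpha_{std}+\delta r^{2}d\theta$ is a contact form on $\sS^3\times B^2(1/2)$, with top wedge a positive multiple of $r\,\alpha_{std}\wedge d\alpha_{std}\wedge dr\wedge d\theta$. Thus $\xi_1'$ is a contact distribution in a neighborhood of the closed set
\begin{equation*}
A \;=\; (\sS^3\times\{0\})\cup (\sS^3\times(B^2(4)\setminus B^2(11/4))).
\end{equation*}

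Because $\xi_1'$ and $\xi_0$ are kernels of nowhere-vanishing $1$-forms that are homotopic through such forms (by taking the $\beta_r$ uniformly close to $\alpha_{ot}$ outside a small interior subinterval via a slow reparametrization of the homotopy), they are homotopic as oriented rank-$4$ distributions. The almost complex structure of $\xi_0$ transports to $\xi_1'$, and Lemma \ref{lem:split} supplies a compatible symplectic form on $\xi_1'$ that extends the contact form $d(\beta_r+\delta r^{2}d\theta)$ on the neighborhood of $A$ where $\xi_1'$ is already contact. Hence $\xi_1'$ is a bona fide almost contact structure on the open manifold $V=\sS^3\times\operatorname{int}(B^2(4))$, contact near $A$. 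The relative $h$-principle applied to $(V,A)$ yields a homotopy of almost contact structures, constant in a neighborhood of $A$, from $\xi_1'$ to a contact structure $\xi_1$. Extending $\xi_1$ by $\xi_0$ across the boundary $\sS^3\times\partial B^2(4)$ gives the required contact structure on $\sS^3\times B^2(4)$, which equals $\xi_0$ outside $\sS^3\times B^2(3)$ and induces $\ker\alpha_{std}$ on $\sS^3\times\{0\}$.

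The main subtlety is the simultaneous control required near both components of $A$. The family $\{\beta_r\}$ must coincide with a genuinely contact $1$-form in a full neighborhood of $r=0$ (so that $\xi_1'$ is contact there, not merely almost contact), while also matching $\alpha_{ot}$ in a full neighborhood of $r=3$. Hard-wiring these two plateaus into the construction of $\beta_r$ confines all the non-contact interpolation to the interior annulus $r\in(1/4,11/4)$, where the $h$-principle takes over. A secondary point of care is checking that the homotopy from $\xi_0$ to $\xi_1'$ is through nowhere-vanishing $1$-forms, so that the almost complex structure can actually be transported; this is why the $\beta_r$ are required to be nowhere-vanishing $1$-forms coorienting the interpolating plane fields, rather than merely pointwise linear combinations which could degenerate.
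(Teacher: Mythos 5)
There is a genuine gap, and it sits exactly where the whole difficulty of the theorem lies: the invocation of the ``relative $h$--principle applied to $(V,A)$''. Gromov's $h$--principle for contact structures on an open manifold $V$ is proved by compressing $V$, by an isotopy, into an arbitrarily small neighborhood of a polyhedron of positive codimension, where the local $h$--principle applies. The relative version rel a closed subset $A$ therefore requires that this compression can be performed by an isotopy \emph{fixed near $A$}. For your pair this is impossible: the region $\sS^3\times\{\e\le r\le 11/4\}\cong\sS^3\times\sS^1\times[0,1]$ is trapped between the two components of $A$ (the central fibre and the outer collar). Any isotopy of $V$ fixed near both components maps this compact piece onto a connected set meeting both radii $r=\e$ and $r=11/4$, hence meeting every intermediate radius, so its image can never lie in a neighborhood of $A$ union a positive--codimension skeleton. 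Concretely, what you are asking of the $h$--principle is: given two contact germs on the two ends of the compact cylinder $\sS^3\times\sS^1\times[0,1]$ which are formally homotopic, produce a contact structure on the cylinder interpolating between them. Once the germ is prescribed on the entire boundary of a compact piece, one is in the closed--manifold regime where Gromov's theorem gives nothing; indeed, if such a relative statement were a formal consequence of \cite{EM} 10.3.2, most of the constructions in this paper (and Theorem \ref{main} itself) would be unnecessary. (Your construction of $\xi_1'$, the contactness of $\alpha_{std}+\delta r^2d\theta$ near $r=0$, and the formal homotopy via Lemma \ref{lem:split} are all fine; only this last step fails.)

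For contrast, the paper replaces the $h$--principle by a hard, explicit interpolation: by \cite{DGS} the Lutz twist producing $\xi_{ot}$ from $\xi_{std}$ is a pair of contact $(+1)$--surgeries on a Legendrian unknot and its stabilized push--off, so reversing them gives a Liouville cobordism $(W,\lambda)$, smoothly a product $\sS^3\times[0,1]$, from the overtwisted sphere to the standard one. Its contactization $(W\times\sS^1,\lambda+\mu\,d\theta)$ is exactly the contact structure on the trapped cylinder that your argument needs; it is then capped off at the convex end by $(\sS^3\times\D^2,\ker(\alpha_{std}+r^2d\theta))$ and matched to the given $\alpha_{ot}$ at the outer end by a Gray isotopy of contact forms. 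If you want to salvage your outline, this Weinstein--cobordism step (or some equally explicit construction exploiting overtwistedness) must replace the appeal to the open--manifold $h$--principle.
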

\noindent This result is a consequence of Lemma 3.2 in \cite{EP}. Let us give an alternative argument, pointed out to us by Y. Eliashberg. \\

\noindent {\bf Proof of Theorem \ref{thm:dishant}.} Let us begin with the tight contact structure on the $3$--sphere $(\sS^3,\xi_{std})$. Performing a Lutz twist along a given transverse trivial knot $K$ produces an overtwisted contact structure $\xi^1_{ot}$ in $\sS^3$ homotopic to $\xi_{std}$ as an almost contact distribution.  The contact structure $\xi^1_{ot}$ is isotopic to the contact structure $\xi^2_{ot}=\ker\alpha_{ot}$. Consider both a trivial Legendrian knot $L\subset(\sS^3,\xi_{std})$ whose positive transverse push--off is $K$, and its Legendrian push--off $L'$ with two additional zig--zags. According to \cite{DGS} a Lutz twist along $K$ is tantamount to a contact (+1)--surgery along $L$ and $L'$. Hence, given $(\sS^3,\xi^1_{ot})$ there exists a ($-$1)--surgery on $(\sS^3,\xi^1_{ot})$ producing $(\sS^3,\xi_{std})$. Such surgery provides a Liouville cobordism $(W,\lambda)$ from $(\sS^3,\xi^1_{ot})$ to $(\sS^3,\xi_{std})$.\\

\noindent The cobordism obtained by a (+1)--surgery along $L$ and $L'$ can be made smoothly trivial, see \cite{DGS}. Consider $\theta\in\sS^1$ and $\eta^1=\lambda+\mu\cdot d\theta$, for a constant $\mu\in\R^+$. Then the contactization $(W\times\sS^1,\eta^1)$ of the exact symplectic manifold $(W,\lambda)\cong(\sS^3\times[0,1],\lambda)$ is diffeomorphic to $\sS^3\times[0,1]\times\sS^1$. We have obtained a contact structure on the $3$--sphere times the annulus such that the inner boundary $\sS^3\times\{0\}$ has fibres $(\sS^3,\xi_{std})$, and $(\sS^3,\xi^1_{ot})$ are the fibres of the outer bundary $\sS^3\times\{1\}$. The inner part is a convex boundary and it can be filled with the contact manifold
$$(\sS^3\times\D^2,\ker(\alpha_{std}+r^2d\theta))$$
in order to obtain a contact structure on $\sS^3\times\D^2$ with $(\sS^3,\xi_{std})$ as central fibre. For a choice of $\mu$ small enough, there exists a small constant $\delta\in\R^+$ such that in a neighborhood $\sS^3\times(1-\varepsilon,1]\times\sS^1$ of the outer boundary the contact structure can be expressed as
$$\eta^1=\alpha^1_{ot}+\delta\cdot r^2d\theta.$$
The contact forms $\alpha^1_{ot}$ and $\alpha^2_{ot}=\alpha_{ot}$ are isotopic via a family of contact forms $\{\alpha^r_{ot}\}$, $r\in[1,2]$. On the manifold $\sS^3\times[1,4]\times\sS^1$ consider the 1--form
$$\eta^2=\widetilde{\alpha}_{ot}+\delta\cdot r^2d\theta\mbox{ for }r\in[1,2]\mbox{ and } \eta^2=\alpha^2_{ot}+\delta\cdot r^2d\theta \mbox{ for }r\in[2,4]$$
where $\widetilde{\alpha}_{ot}(p,r,\theta)=\alpha^r_{ot}(p)$. The form $\eta^2$ is a contact form because the form $r^2d\theta$ does not depend on the point $p\in\sS^3$. The gluing of the contact forms $\eta^1$ and $\eta^2$ is the required contact structure $\xi_1$ on $\sS^3\times B^2(4)$. \hfill$\Box$\\

\noindent Notice that this deformation gives a homotopy of almost contact structures.

\subsection{Proof of Theorem \ref{main}.}

\noindent Let $(M,\xi,\omega)$ be an almost contact structure. Applying Lemma \ref{lem:almost-quasi} we suppose that $(\xi,\omega)$ is an exact quasi--contact structure. Proposition \ref{prop:good_ac} allows us to construct a good almost contact pencil for an homotopic almost contact structure also referred to as $(\xi,\omega)$. Then Theorem \ref{thm:blow-up} provides a good ace fibration $(f,C,E)$ on an almost contact manifold $(\widetilde{M},\widetilde{\xi},\widetilde{\omega})$, a contact neighborhood $\SN(B)$ of $B$ and a diffeomorphism $\Pi:\widetilde M\setminus E\longrightarrow M\setminus B$ such that $(\Pi_*\widetilde\xi,\Pi_*\widetilde\omega)= (\xi,\omega)$.\\

\noindent Theorems \ref{thm:vert_defor}, \ref{prop:pencil_skeleton} and \ref{thm:filling} subsequently applied to this almost contact manifold and good ace fibration yield a contact structure $\widetilde\xi_c$ on $\widetilde M$. It induces the standard overtwisted structure on the exceptional spheres since a sequence of full Lutz twists are performed. Apply Theorem \ref{thm:dishant} to deform the contact structure to be the initial tight contact structure near each of the exceptional spheres. Then, maybe after a small deformation, it coincides with  $(\widetilde\xi,\widetilde\omega)$ in a tubular neighborhood $\SN(E)$ of $E$. Let us still refer to this contact structure as $\widetilde\xi_c$. The distribution $\Pi_*\widetilde\xi_c$ defines a contact structure on $M\setminus\SN(B)$. It coincides with $(\Pi_*\widetilde\xi,\Pi_*\widetilde\omega)=(\xi,\omega)$ in the submanifold $\Pi(\SN(E)\setminus E)$. The almost contact structure $(\xi,\omega)$ is a contact structure in a neighborhood of $\SN(B)$. In consequence $\Pi_*\widetilde\xi_c$ can be extended to a contact structure $\xi_c$ on $M$. This concludes the proof of the existence of a contact structure $\xi_c$ in the manifold $M$.\\

\noindent Let us prove that $\xi$ and $\xi_c$ are homotopic. There exists a homotopy between $(\widetilde\xi,\widetilde\omega)$ and $\widetilde\xi_c$ over $\widetilde M$. This homotopy restricts to a homotopy over the open submanifold $\widetilde M \setminus E$. Then, the diffeomorphism $\Pi$ yields a homotopy between $(\xi,\omega)$ and $\xi_c$ in the open manifold $M\setminus\SN(B)$. Let us consider a cell decomposition of the manifold $M$ such that $\SN(B)$ does not intersect the 2--skeleton. Such decomposition exists because $B$ is $1$--dimensional, $M$ is $5$--dimensional and the genericity of transversality. Thus $(\xi,\omega)$ and $\xi_c$ are homotopic over the 2--skeleton of this cell decomposition. Then Lemma \ref{lem:2sk} implies that the almost contact structures $(\xi,\omega)$ and $\xi_c$ are also homotopic over $M$.\hfill$\Box$

\subsection{Uniqueness}

The uniqueness of a contact structure in every homotopy class of almost contact structures does not hold in a $5$--manifold. There are many examples in the literature, for instance \cite{Pr2} provides two non--contactomorphic contact structures in the same almost contact homotopy class.\\

\noindent The construction described in this article requires a fair amount of choices. Though, the dependence of the contact structure with respect to them may be understood. The three main ingredients are the stabilization procedure of almost contact pencils, in the same spirit than Giroux's stabilization for a contact open book decomposition ~\cite{Co, Ko}, the addition of fake curves in the triangulation increasing the amount of holes filled with the local model and the surgery procedure.

 \section{Non--coorientable case} \label{sec:non-coorientable}
 
 \subsection{Definitions}
 
 Let $M$ be a ($2n+1$)--dimensional closed manifold, not necessarily orientable. In order to state the Theorem \ref{main} in the non--coorientable setting, we need to give a definition of a non--coorientable almost contact structure. This is a distribution with a suitable reduction of the structure group along with a property requiring a relation between the normal bundle and the distribution. First we introduce the Lie group $\mathfrak{A}(n)$ defined as
 $$\mathfrak{A}(n)=\{A\in O(2n):\quad AJ=\pm JA\},\quad\mbox{where }
 J=\left(\begin{array}{cc}
 0 & Id_n\\
 -Id_n & 0
 \end{array}\right)$$
 
 Notice the following properties:
 \begin{itemize}
 \item[1.] The group $\mathfrak{A}(n)$ has two connected components. It is homeomorphic to $U(n)\times\Z_2$.
 \item[2.] Its group structure is isomorphic to a semidirect product $U(n)\rtimes_\rho\Z_2$. More precisely, let $\mathbb{I}=\left(\begin{array}{cc}
 Id_n & 0\\
 0 & -Id_n
 \end{array}\right)$, then the action
 $$\rho:\Z_2\longrightarrow Aut(U(n)),\quad a\longmapsto (U\longmapsto \mathbb{I}^aU\mathbb{I}^a)$$
 induces the semidirect product structure in the usual way.
 \item[3.] There is a natural group morphism $\mathfrak{s}:\mathfrak{A}(n)\longrightarrow\Z_2$ defined as
 $$\mathfrak{s}(A)=tr(JAJ^{-1}A^{-1})/(2n),$$
 i.e. under the previous isomorphism, $\mathfrak{s}$ is the projection onto the second factor of $U(n)\rtimes_\rho\Z_2$.
 \end{itemize}
 
 \noindent Let us deduce some topological implications of the existence of a contact structure. Let $\xi \subset TM$ be a possibly non--coorientable contact structure on $M$ with a fixed set $\{U_i\}$ of trivializing contractible charts. Choose $\alpha_i$ as a local equation for $\xi|_{U_i}$, then
 $$\alpha_i=a_{ij}\alpha_j,\qquad\mbox{with }a_{ij}:U_i\cap U_j\longrightarrow \{\pm1\}.$$
 This implies that $\{a_{ij}\}$ are the transition function of the normal line bundle $TM/\xi$. Further,  $(d\alpha_i)_{|\xi}=a_{ij}(d\alpha_j)_{|\xi}$. In particular, we may choose a family of compatible complex structures $\{J_i\}$ for the bundle $\xi$ satisfying $J_i=a_{ij}J_j$.\\
 
 \noindent First, note that there is a group injection
 $$\mathfrak{A}(n)\longrightarrow O(2n+1),\quad A\longmapsto\left(\begin{array}{cc}
 A & 0\\
 0 & \mathfrak{s}(A)
 \end{array}\right)$$
 and thus the structure group of $M$ reduces to $\mathfrak{A}(n)$. And second, a $\mathfrak{A}(n)$--bundle $E$ induces via the morphism $\mathfrak{s}$ a real line bundle $\mathfrak{s}(E)$. This construction applied to $\xi$ gives the line bundle $TM/\xi$ in the case above. These two properties will be the ones required in the following:
 
 \begin{definition}
 An almost contact structure on a manifold $M$ is a codimension $1$ distribution $\xi \subset TM$ such that the structure group of $\xi$ reduces to $\mathfrak{A}(n)$ and $\mathfrak{s}(\xi)\cong TM/\xi$.
 \end{definition}
 
 \noindent Observe that the definition for a cooriented almost contact distribution coincides with the one previously given. There are some immediate topological consequences of the existence of such a $\xi$. Indeed:\\
 
 \begin{enumerate}
 \item If $n$ is an even integer, then $\mathfrak{A}(n)\subset SO(2n)$. Thus the distribution $\xi$ is oriented.
 \item If $n$ is an even integer, there is an isomorphism
 \begin{equation}
 TM/\xi\cong det(TM). \label{eq:normdet}
 \end{equation}
 Hence, any almost contact structure in an orientable $5$--dimensional manifold is cooriented. Conversely, any non--orientable $5$--manifold can only admit non--corientable almost contact structures.
 \item If $n$ is an odd integer, then $\mathfrak{s}=det$ as morphisms from $\mathfrak{A}(n)$ to $\Z_2$. Therefore $M$ is orientable since
 $$det(TM)\cong det(\xi\oplus (TM/\xi))\cong det(\xi)\otimes\mathfrak{s}(\xi)\cong det(\xi)^2\cong\R$$
 \end{enumerate}
 
 \noindent Let $M^{2n+1}$ be a non--orientable manifold with $n$ an even integer. Then there exists a canonical $2:1$ cover
$$\pi_2:M_2\longrightarrow M$$
satisfying the following properties:
\begin{itemize}
\item[1.] $M_2$ is an orientable manifold.
\item[2.] Any almost contact structure $\xi$ on $M$ lifts to an almost contact structure $\pi_2^*\xi$ on $M_2$. Moreover, such a distribution is cooriented because of equation (\ref{eq:normdet}).
\end{itemize}

\subsection{Statement of the main result}
Let us state the equivalent of Theorem \ref{main} in the non--coorientable setting:
\begin{theorem}
Let $M$ be a non--orientable closed $5$--dimensional manifold. Let $\xi$ be an almost contact structure. Then there exists a contact structure $\xi_c$ homotopic to $\xi$.
\end{theorem}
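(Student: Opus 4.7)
My plan is to reduce the non-coorientable statement to Theorem \ref{main} by running its entire constructive proof equivariantly with respect to the deck transformation of the canonical orientation double cover. Let $\pi_2\colon M_2\to M$ be the free $2{:}1$ cover described just before the theorem, with involution $\tau$, and lift the almost contact datum to $(\tilde\xi,\tilde\omega):=\pi_2^*(\xi,\omega)$. By the second property of $\pi_2$ noted in the excerpt, $\tilde\xi$ is a cooriented almost contact structure on the oriented $5$-manifold $M_2$, and $\tau$ preserves $\tilde\xi$ as a distribution while simultaneously reversing the orientation of $M_2$ and the coorientation of $\tilde\xi$; in particular $\tau^*\tilde\omega=\tilde\omega$ and $\tau$ preserves the induced orientation on $\tilde\xi$. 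Any $\tau$-invariant contact distribution $\tilde\xi'$ on $M_2$ homotopic to $\tilde\xi$ through $\tau$-invariant almost contact structures descends via $\pi_2$ to the desired contact structure $\xi'$ on $M$, and its homotopy descends to the desired homotopy between $\xi$ and $\xi'$.

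The whole problem therefore reduces to producing a $\tau$-equivariant version of the construction in Theorem \ref{main}. Starting from the $\tau$-invariant pair $(\tilde\xi,\tilde\omega)$, one first applies an equivariant version of Lemma \ref{lem:almost-quasi}: Gromov's open-symplectic h-principle is carried out on the quotient (where $\omega$ is interpreted as a form with values in the orientation line bundle of $TM/\xi$), or equivalently one averages the resulting family under $\tau$, which is possible because symplectic forms on a fixed bundle compatible with a given almost complex structure form a contractible space. Next, a quasi-contact pencil in the sense of Theorem \ref{thm:exist_pencil} is built equivariantly: since $\pi_2$ is a free cover, approximately holomorphic sections of a tensor power of the prequantum line bundle on $M$, twisted by $\det(TM)$ to account for the non-coorientability, pull back to approximately holomorphic sections on $M_2$ whose associated pencil $(\tilde f,\tilde B,\tilde C)$ factors through $M$ and is hence automatically $\tau$-invariant. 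An equivariant $C^0$-small perturbation (Lemma \ref{coro:perturb_pencil}) then renders the pencil good, and the blow-up of Theorem \ref{thm:good_blowup} is performed simultaneously on $\tau$-orbits of components of $\tilde B$ with matching framing parameter, so that the surgery commutes with $\tau$ and Proposition \ref{prop:choice} still yields a vanishing fibrewise first Chern class.

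The subsequent deformations preserve the $\tau$-symmetry because they are local in nature. The two families of overtwisted disks in Proposition \ref{propo:disc_family} are produced via Lutz twists around $\tau$-invariantly placed transverse loops near the exceptional divisors, and Eliashberg's relative classification (Theorem \ref{thm:eliash} and Corollary \ref{coro:eliash2}) is applied on a $\tau$-invariant adapted family and cell decomposition of the base $\CP^1$; since these arguments are local on each $3$-dimensional fibre and parametrically natural, the resulting deformation is $\tau$-equivariant. The horizontal deformations of Sections \ref{sect:skeleton} and \ref{sec:bands} similarly proceed cell by cell over a $\tau$-invariant adapted family $T$, and the explicit contact model of Proposition \ref{propo:model}, built from the star-shaped embedding of $\sS^2$ into $\R^3$, is manifestly symmetric under the antipodal $\Z_2$-action, so each local surgery can be arranged to commute with $\tau$. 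The final standardization near exceptional spheres (Theorem \ref{thm:dishant}) and the blow-down (Lemma \ref{lem:down}) are also local near the $\tau$-equivariant exceptional configuration and descend directly.

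The main obstacle I anticipate is the construction of a genuinely $\tau$-equivariant Donaldson-type pencil with the estimated transversality guaranteed by the approximately holomorphic method, as the averaging or symmetrization process must be shown to preserve the quantitative transversality estimates. This is manageable precisely because $\tau$ acts freely: one may descend the relevant prequantum line bundle to the quotient $M$, equipped with the twist coming from $\det(TM)$, and apply the existing theory of quasi-contact pencils of \cite{IbortDMT2,Fran1} in that setting, pulling back the result to $M_2$. Modulo this single technical point, the rest of the argument is an equivariant rerun of the proof of Theorem \ref{main}, culminating in a $\tau$-invariant contact structure on $M_2$ that projects to the required $\xi'$ on $M$ within the prescribed homotopy class of almost contact structures.
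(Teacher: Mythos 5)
Your overall strategy --- pass to the orientation double cover $\pi_2\colon M_2\to M$ with deck involution $\tau$ and rerun the proof of Theorem \ref{main} $\tau$--equivariantly --- is exactly the paper's. But there is one genuine gap, and it sits precisely at the step you wave through: the vertical deformation of Section \ref{sec:vertical}. You assert that Eliashberg's relative classification (Theorem \ref{thm:eliash}, Corollary \ref{coro:eliash2}) yields a $\tau$--equivariant deformation because the argument is ``local on each $3$--dimensional fibre and parametrically natural.'' It is not: Eliashberg's construction depends on a triangulation of the fibre, on choices of models near the skeleta, and on the placement of the overtwisted disks, none of which can be assumed to commute with the free involution that $\tau$ induces on each fibre $F_z$ (recall $\tilde f\circ\tau=\tilde f$, so $\tau$ acts fibrewise). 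There is no off--the--shelf equivariant version of Theorem \ref{thm:eliash}, and naturality of the statement does not give equivariance of the output. The paper's resolution is different and is the one non--formal point of the whole adaptation: quotient by $\tau$, so that each fibre becomes an oriented $3$--manifold carrying a \emph{non--coorientable} plane distribution, observe that Eliashberg's classification in \cite{El} nowhere uses coorientability, apply it downstairs, and only then lift back to $M_2$. Your proposal needs this (or an equivalent) argument; as written, the equivariance claim at this step is unjustified and, taken literally, false.

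A secondary point: since $n=2$ is even, $\xi$ is oriented but its conformal symplectic structure is defined only up to sign (the local forms satisfy $\omega_i=a_{ij}\omega_j$ with $a_{ij}=\pm1$), so on the cover one has $\tau^*\tilde\omega|_{\tilde\xi}=-\tilde\omega|_{\tilde\xi}$ and likewise $\tau^*\tilde J=-\tilde J$, not $\tau^*\tilde\omega=\tilde\omega$ as you state. This does not derail the argument --- a $\tau$--invariant contact \emph{distribution} still descends --- but it means the objects you must propagate equivariantly (the closed $2$--form of Lemma \ref{lem:almost-quasi}, the defining contact forms, the compatible $J$ used in the approximately holomorphic construction) are anti--invariant rather than invariant, i.e.\ twisted by the orientation line bundle $TM/\xi$, and the averaging arguments should be phrased accordingly. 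Your remaining steps (equivariant pencil via descent to $M$, invariant blow--up along $\tau$--orbits with matched framings, two $\tau$--paired families of overtwisted disks, invariant adapted family, invariant framing in Proposition \ref{propo:model}, invariant blow--down) match the paper's outline.
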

 
 \begin{proof}
 Let $\pi_2:(M_2,\pi_2^*\xi)\longrightarrow (M, \xi)$ be an orientable double cover. The constructions developed in this article can be performed in a $\Z_2$--invariant manner. Let us discuss it:
 \begin{enumerate}
 \item An almost contact pencil $(f,B,C)$  can be made $\Z_2$--invariant. To be precise, the loci $B$ and $C$ are $\Z_2$--invariant subsets and $f$ is a $\Z_2$--invariant as a map. In particular the action preserves the fibres. This is because the approximately holomorphic techniques can be developed in that setting. See \cite{IMP} for the details of the construction in the $\Z_2$--invariant setting.\\
 
 \item The deformations performed in Section \ref{sec:defor_local} can easily be done in a $\Z_2$--invariant way. Also, the surgery along a $\Z_2$--invariant loop can be built to preserve that symmetry.\\
 
 \item Subsection \ref{subsec:otdisks} is also prepared for the $\Z_2$--invariant setting. Instead of having a single pair of overtwisted disks, we require two pairs of overtwisted disks. Each pair in the image of the other through the $\Z_2$--action.\\
 
 \item Eliashberg's construction is not $\Z_2$--invariant. Therefore we proceed by quotienting the whole manifold by the $\Z_2$--action, we then obtain an almost contact pencil over the quotient. The fibres are oriented since they are $3$--dimensional almost contact manifolds. The induced almost contact distribution on them is non--coorientable. However, there is no hypothesis on the coorientability in the results of ~\cite{El}. Once the procedure described in Section \ref{sec:vertical} is applied, we consider the orienting double cover.\\
 
 \item Section \ref{sec:skeleton} is trivially adapted to the $\Z_2$--invariant setting if a serious increase of notation is allowed.\\
 
 \item Filling the $2$--cells as in Section \ref{sec:bands} and \ref{sec:end}. We need to produce a $\Z_2$--invariant standard model over $M \times \sS^2$, with $(M,\alpha_0)$ a contact manifold with a $\Z_2$--invariant action. The only required ingredient is to ensuring that the framing $\{\alpha_0,  \alpha_1, \alpha_2 \}$ is chosen $\Z_2$--invariant. The rest of the proof works through up to notation details.\\
 
 \item The arguments in Section \ref{sec:end} are still $\Z_2$--invariant if the previous choices have been done $\Z_2$--invariantly. Therefore, we obtain a $\Z_2$--invariant contact structure $\xi^2_c$ on $M_2$. Its quotient produces a contact structure $\xi_c$ on $M$.
 \end{enumerate}
 This proves the existence part of the statement. The statement concerning the homotopy follows since the homotopies can be easily made $\Z_2$--invariant.
 \end{proof}

\end{document}